\newcommand{\vect}[1]{\ensuremath{\mathbf{#1}}}
\newcommand{\card}[1]{\ensuremath{\lvert{#1}\rvert}}
\newcommand{\multiset}[1]{\ensuremath{\langle{#1}\rangle}}
\newcommand{\dotcup}{\mathop{\mathaccent \cdot \cup}}
\newcommand{\IN}{\ensuremath{\mathbb{N}}}
\newcommand{\nset}[1]{\ensuremath{[{#1}]}}
\newcommand{\couples}[1][n]{\ensuremath{\binom{#1}{2}}} 
\newcommand{\ontuples}[1]{\ensuremath{\underline{#1}}} 
\newcommand{\Anneq}[1][n]{\ensuremath{A^{#1}_{\neq}}}
\newcommand{\Aneq}[1][n]{\ensuremath{A^{#1}_{=}}}
\newcommand{\rev}[1]{\ensuremath{{#1}^\mathrm{rev}}}
\newcommand{\differences}{\ensuremath{\Delta}}
\newcommand{\symm}[1]{\ensuremath{S_{#1}}} 
\newcommand{\cycl}[1]{\ensuremath{Z_{#1}}}         
\newcommand{\dihed}[1]{\ensuremath{D_{#1}}}        
\newcommand{\gensg}[1]{\ensuremath{\langle {#1} \rangle}}
\newcommand{\asc}[1]{\ensuremath{\iota_{#1}}}      
\newcommand{\desc}[1]{\ensuremath{\delta_{#1}}}    
\newcommand{\natcycle}[1]{\ensuremath{\zeta_{#1}}} 
\newcommand{\clUNI}{\ensuremath{\mathsf{UNI}}}
\newcommand{\clOFO}{\ensuremath{\mathsf{OFO}}}
\newcommand{\clCS}{\ensuremath{\mathsf{CS}}}
\newcommand{\clTWOST}{\ensuremath{\mathsf{2ST}}}
\newcommand{\clSYMM}{\ensuremath{\mathsf{SYMM}}}
\newcommand{\clSUPP}{\ensuremath{\mathsf{SUPP}}}
\DeclareMathOperator{\Inv}{Inv}                  
\DeclareMathOperator{\singles}{sng}
\DeclareMathOperator{\indexsingles}{Sng}
\DeclareMathOperator{\cs}{cs}
\DeclareMathOperator{\supp}{supp}                
\DeclareMathOperator{\ofo}{ofo}                  
\DeclareMathOperator{\id}{id}                    
\DeclareMathOperator{\ms}{ms}                    
\DeclareMathOperator{\set}{set}                  
\DeclareMathOperator{\red}{red}                  
\DeclareMathOperator{\deck}{deck}                
\DeclareMathOperator{\Pat}{Pat}
\DeclareMathOperator{\Comp}{Comp}
\theoremstyle{plain}
\newtheorem{theorem}{Theorem}[section]
\newtheorem{proposition}[theorem]{Proposition}
\newtheorem{lemma}[theorem]{Lemma}
\newtheorem{corollary}[theorem]{Corollary}
\numberwithin{claim}{theorem}
\theoremstyle{definition}
\newtheorem{example}[theorem]{Example}
\newtheorem{problem}[theorem]{Problem}
\newtheorem{question}[theorem]{Question}
\theoremstyle{remark}
\newtheorem{remark}[theorem]{Remark}
\begin{document}
\title{Content and singletons bring unique identification minors}
\author{Erkko Lehtonen}
\address{
Technische Universit\"at Dresden \\
Institut f\"ur Algebra \\
01062 Dresden \\
Germany}
\email{Erkko.Lehtonen@tu-dresden.de}
\date{\today}

\begin{abstract}
A new class of functions with a unique identification minor is introduced: functions determined by content and singletons. Relationships between this class with other known classes of functions with a unique identification minor are investigated. Some properties of functions determined by content and singletons are established, especially concerning invariance groups and similarity.
\end{abstract}

\keywords{functions of several arguments, identification minors, permutation patterns}
\subjclass[2010]{08A40, 05A05}

\maketitle


\section{Introduction}

This paper reports recent developments in the theory of functions of several arguments, especially in the topic of minors of functions.
A function $f \colon A^n \to B$ is said to be a minor of $g \colon A^m \to B$ if $f$ can be obtained from $g$ by introduction or deletion of inessential arguments, identification of arguments, and permutation of arguments.
The formation of minors is a way of deriving new functions from given ones that has great significance in universal algebra.
Examples of work on minors of functions include the papers of Couceiro and Foldes~\cite{CouFol}, Couceiro, Sch\"olzel, and the current author~\cite{CouLehSch-threshold}, Ekin, Foldes, Hammer, and Hellerstein~\cite{EFHH}, Pippenger~\cite{Pippenger}, Willard~\cite{Willard}, and Zverovich~\cite{Zverovich}.

An important special case of minors are the so\hyp{}called identification minors.
The identification minors of a function $f \colon A^n \to B$ are the $(n-1)$\hyp{}ary functions that are obtained from $f$ by identifying a single pair of arguments.
A function is said to have a unique identification minor if all its identification minors are equal to each other, up to permutation of arguments.
This is an intriguing property of functions that is not quite well understood, and this paper concerns the following open problem.

\begin{problem}
\label{problem}
Characterize the functions with a unique identification minor.
\end{problem}

This problem was previously posed, although using a somewhat different formalism, in the 2010 paper by Bouaziz, Couceiro, and Pouzet~\cite[Problem~2(ii)]{BouCouPou}.
It is well known that the $2$\hyp{}set\hyp{}transitive functions have a unique identification minor (for a proof of this fact, see~\cite[Proposition~4.3]{Lehtonen-totsymm}; this fact is also implicit in the work of Bouaziz, Couceiro, and Pouzet~\cite{BouCouPou}).
It was recently shown by the current author that the functions determined by the order of first occurrence also have this property~\cite[Corollary~5]{Lehtonen-unique}.

In this paper, we describe a previously unknown class of functions that have a unique identification minor: functions determined by content and singletons.
These are functions $f \colon A^n \to B$ whose value at $\vect{a} \in A^n$ depends only on the content of $\vect{a}$ and the order in which the singletons of $\vect{a}$ appear. The content of a tuple $\vect{a}$ is the multiset of elements occurring in $\vect{a}$, and an element $a \in A$ is called a singleton of $\vect{a}$ if it occurs exactly once in $\vect{a}$.

We investigate the relationships between the known classes of functions with a unique identification minor, and we study some invariance properties of functions determined by content and singletons.
However, a definitive answer to Problem~\ref{problem} eludes us; it is not known to us whether there exist further examples of functions with a unique identification minor.

This paper is organised as follows.
We introduce the necessary terminology and notation in Section~\ref{sec:preliminaries}.
In Section~\ref{sec:cs}, we define functions determined by content and singletons and we show that they have a unique identification minor. We verify that this class does indeed provide new examples of functions with a unique identification minor; it is not included in the previously known classes.
In Section~\ref{sec:how}, we explain how functions determined by content and singletons arise as functions with a unique identification minor satisfying a simple additional condition. We do the same for functions determined by the order of first occurrence.
In Section~\ref{sec:permutations}, we study invariance groups of functions determined by content and singletons.
In Section~\ref{sec:similarity} we find distinct functions determined by content and singletons that can be obtained from each other by permutation of arguments.
Finally, in Section~\ref{sec:remarks}, we make some concluding remarks and indicate a few open problems.


\section{Preliminaries}
\label{sec:preliminaries}

\subsection{General notation}

The set of nonnegative integers is denoted by $\IN$, and $\IN_+ := \IN \setminus \{0\}$.
For $n \in \IN_+$, the set $\{1, \dots, n\}$ is denoted by $\nset{n}$.
The power set of a set $A$ is denoted by $\mathcal{P}(A)$.
The set of all $2$\hyp{}element subsets of $\nset{n}$ is denoted by $\couples$.
The symmetric group on $\nset{n}$ is denoted by $\symm{n}$, and its members are sometimes called \emph{$n$\hyp{}permutations.}

As usual, we denote by $A^n$ the set of all $n$\hyp{}tuples over a set $A$, and we write $A^* := \bigcup_{n \geq 0} A^n$.
We will refer to the elements of $A^*$ interchangeably as tuples or strings, and we will often make use of the monoid structure of $A^*$ and concatenate strings.
The empty string is denoted by $\varepsilon$.
We denote tuples (or strings) with bold letters and their components with corresponding italic letters, e.g., $\vect{a} = (a_1, \dots, a_n) = a_1 \dots a_n$.
The length of a string $\vect{a}$ is denoted by $\card{\vect{a}}$.
We say that an element $a \in A$ \emph{occurs} in a string $\vect{a} = (a_1, \dots, a_n) \in A^n$ if $a_i = a$ for some $i \in \nset{n}$.

We denote by $\Anneq$ the set of all $n$\hyp{}tuples on $A$ with no repeated entries, i.e., $\Anneq = \{(a_1, \dots, a_n) \in A^n : a_i = a_j \implies i = j\}$. Note that $\Anneq = \emptyset$ whenever $n > \card{A}$.
Let $A^\sharp := \bigcup_{n \geq 0} \Anneq$, i.e., $A^\sharp$ is the set of all strings over $A$ with no repeated symbols.
Furthermore, write $\Aneq := A^n \setminus \Anneq$. 

Let $\vect{a} = (a_1, \dots, a_n) \in A^n$ and let $\tau \colon \nset{m} \to \nset{n}$.
Since the $n$\hyp{}tuple $\vect{a}$ is formally a map $\nset{n} \to A$, we may compose it with $\sigma$, and we obtain $\vect{a} \circ \tau \colon \nset{m} \to A$, which is the $m$\hyp{}tuple $(a_{\tau(1)}, \dots, a_{\tau(m)})$.
In order to simplify notation, we will abbreviate $\vect{a} \circ \tau$ as $\vect{a} \tau$.
The map $\tau$ induces a map $\ontuples{\tau} \colon A^n \to A^m$ by the rule $\ontuples{\tau}(\vect{a}) = \vect{a} \tau$ for all $\vect{a} \in A^n$.

\subsection{Functions and minors}

Let $A$ and $B$ be arbitrary nonempty sets. A \emph{function \textup{(}of several arguments\textup{)} from $A$ to $B$} is a mapping $f \colon A^n \to B$ for some positive integer $n$ called the \emph{arity} of $f$.
The set of all functions of several arguments from $A$ to $B$ is denoted by $\mathcal{F}_{AB}$.
For any set $\mathcal{C} \subseteq \mathcal{F}_{AB}$, the \emph{$n$\hyp{}ary part} of $\mathcal{C}$ is the set of the $n$\hyp{}ary members of $\mathcal{C}$ and it is denoted by $\mathcal{C}^{(n)}$.
We also write $\mathcal{C}^{(\geq k)} := \bigcup_{n \geq k} \mathcal{C}^{(n)}$.

Let $f \colon A^n \to B$ and $g \colon A^m \to B$.
We say that $f$ is a \emph{minor} of $g$, and we write $f \leq g$, if there exists a map $\tau \colon \nset{m} \to \nset{n}$ such that $f = g \circ \ontuples{\tau}$, i.e., $f(\vect{a}) = g(\vect{a} \tau)$ for all $\vect{a} \in A^n$.
We say that $f$ and $g$ are \emph{equivalent,} and we write $f \equiv g$, if $f \leq g$ and $g \leq f$.
The minor relation $\leq$ is a quasiorder and $\equiv$ is an equivalence relation on $\mathcal{F}_{AB}$.
The induced partial order on $\mathcal{F}_{AB} / {\equiv}$ will also be denoted by $\leq$.
We will often consider the following refinement of equivalence:
we say that $f$ and $g$ are \emph{similar,} and we write $f \simeq g$, if $f$ and $g$ have the same arity ($n = m$) and $f \equiv g$.
It is easy to verify that $f \simeq g$ if and only if there exists a bijection $\tau \colon \nset{n} \to \nset{n}$ such that $f = g \circ \ontuples{\tau}$.

Of particular importance are the minors obtained by identifying a single pair of arguments.
Assume that $n \geq 2$, and let $f \colon A^n \to B$.
For each $I \in \couples$, define the function $f_I \colon A^{n-1} \to B$ as $f_I = f \circ \ontuples{\delta_I}$, where $\delta_I \colon \nset{n} \to \nset{n-1}$ is given by the rule
\[
\delta_I(i) =
\begin{cases}
i,      & \text{if $i < \max I$,} \\
\min I, & \text{if $i = \max I$,} \\
i - 1,  & \text{if $i > \max I$.}
\end{cases}
\]
More explicitly, if $I = \{i, j\}$ with $i < j$, then
\[
f_I(a_1, \dots, a_{n-1}) = f(a_1, \dots a_{j-1}, a_i, a_j, \dots, a_{n-1}).
\]
Note that $a_i$ appears twice on the right side of the above equality: both at the $i$\hyp{}th and at the $j$\hyp{}th positions.
The functions $f_I$ ($I \in \couples$) are referred to as the \emph{identification minors} of $f$.

\subsection{Invariance groups}
\label{subsec:invariance}

A function $f \colon A^n \to B$ is \emph{invariant} under a permutation $\sigma \in \symm{n}$ if $f = f \circ \ontuples{\sigma}$.
The set of all permutations under which $f$ is invariant constitutes a permutation group, and it is called the \emph{invariance group} of $f$ and denoted by $\Inv f$.
A function is \emph{totally symmetric,} if its invariance group is the full symmetric group $\symm{n}$.
A function is \emph{$2$\hyp{}set\hyp{}transitive,} if its invariance group is $2$\hyp{}set\hyp{}transitive. Recall that a permutation group $G \leq \symm{n}$ is \emph{$2$\hyp{}set\hyp{}transitive,} if it acts transitively on the $2$\hyp{}element subsets of $\nset{n}$, i.e., for all $I, J \in \couples$, there exists $\sigma \in G$ such that $\sigma(I) = J$.
The invariance groups of functions have caught much attention of researchers; see, for example, the papers by Grech and Kisielewicz~\cite{GreKis}, Horv\'ath, Makay, P\"oschel, and Waldhauser~\cite{HMPW}, and Kisielewicz~\cite{Kisielewicz}.

The notion of invariance extends immediately to partial functions.
We say that a partial function $f \colon S \to B$, $S \subseteq A^n$, is \emph{invariant} under $\sigma \in \symm{n}$ if for every $\vect{a} \in S$, it holds that $\vect{a} \sigma \in S$ and $f(\vect{a}) = f(\vect{a} \sigma)$.
In this paper, we will often consider partial functions whose domain is $\Aneq$ (or, of course, $A^n$), but other domains are also possible.

\subsection{Functions determined by $\phi$}
\label{subsec:detby}

We will classify functions by their decomposability through certain mappings.
Let $\phi \colon A^* \to X$ for some set $X$.
We say that a function $f \colon A^n \to B$ is \emph{determined by $\phi$} if there exists a map $f^* \colon X \to B$ such that $f = f^* \circ \phi|_{A^n}$.
This definition extends immediately to partial functions: a partial function $f \colon S \to B$, $S \subseteq A^n$, is \emph{determined by $\phi$} if $f = f^* \circ \phi|_S$ for some $f^* \colon X \to B$.

In this paper, the mapping $\phi$ used in the expression ``determined by $\phi$'' will be one of the following four maps: $\supp$, $\ofo$, $\ms$, $\cs$.
The first two of these mappings are defined below, and the last two will be defined in Section~\ref{sec:cs}.

Following the definition given by Berman and Kisielewicz~\cite{BerKis}, $\supp \colon A^* \to \mathcal{P}(A)$ maps every tuple to the set of its entries, i.e., $\supp(a_1, \dots, a_n) = \{a_1, \dots, a_n\}$.

As defined in~\cite{Lehtonen-unique}, the map $\ofo \colon A^* \to A^\sharp$ is given by the following rule: $\ofo$ maps each tuple $\vect{a}$ to the tuple obtained from $\vect{a}$ by deleting all repeated occurrences of symbols, retaining only the first occurrence of each symbol; in other words, $\ofo(\vect{a})$ lists the symbols occurring in $\vect{a}$ in the order of first occurrence (hence the initialism ofo).
We also say that a function determined by $\ofo$ is \emph{determined by the order of first occurrence.}
The mapping $\ofo$ has the following simple but very fundamental property: $\ofo(\vect{a}) = \ofo(\vect{a} \delta_I)$ for every $\vect{a} \in A^{n-1}$ and for every $I \in \couples$.

\subsection{Multisets}

A \emph{multiset} over a set $A$ is an ordered pair $M = (A, \chi_M)$, where $\chi_M \colon A \to \IN$ is a map called a \emph{multiplicity function.}
The value $\chi_M(a)$ is referred to as the \emph{multiplicity} of $a$ in $M$.
The \emph{support set} of a multiset $M$, denoted by $\set(M)$, is the set of those elements that have nonzero multiplicity in $M$, i.e., $\set(M) := \{a \in A : \chi_M(a) > 0\}$.
A multiset is \emph{finite} if its support set is finite.
If $M$ is finite, then the sum $\sum_{a \in A} \chi_M(a)$ is a well\hyp{}defined integer that is denoted by $\card{M}$ and called the \emph{cardinality} of $M$.
The set of all finite multisets over $A$ is denoted by $\mathcal{M}(A)$.
In this paper, we will consider only finite multisets and we will refer to them simply as multisets.

We may describe a finite multiset as a list enclosed in angle brackets in which the number of occurrences of each element equals its multiplicity (the order does not matter), e.g., $\multiset{1,1,1,2,3,3}$ is the multiset $(A, \chi_M)$ satisfying $\chi_M(1) = 3$, $\chi_M(2) = 1$, $\chi_M(3) = 2$, and $\chi_M(a) = 0$ for all $a \in A \setminus \{1, 2, 3\}$.
We will use the shorthand $a^m$ for $m$ occurrences of $a$. Thus, we can describe the above multiset equivalently as $\multiset{1^3, 2, 3^2}$.

The \emph{join} of multisets $M_1$ and $M_2$ over $A$ is the multiset $M_1 \dotcup M_2$ given by the multiplicity function $\chi_{M_1 \dotcup M_2}(a) = \chi_{M_1}(a) + \chi_{M_2}(a)$ for all $a \in A$.


\section{Functions determined by content and singletons}
\label{sec:cs}

A function $f \colon A^n \to B$ is said to have a \emph{unique identification minor,} if $f_I \simeq f_J$ for all $I, J \in \couples$.
This condition is equivalent to the following: there exists a function $h \colon A^{n-1} \to B$ and a family $(\rho_I)_{I \in \couples}$ of permutations $\rho_I \in \symm{n-1}$ such that $f_I = h \circ \ontuples{\rho_I}$ for all $I \in \couples$.
Note that if $f \simeq g$ and $f$ has a unique identification minor, then also $g$ has a unique identification minor.

As mentioned in the introduction, it is known that the $2$\hyp{}set\hyp{}transitive functions and the functions determined by the order of first occurrence have a unique identification minor (see~\cite{BouCouPou}, \cite[Proposition~4.3]{Lehtonen-totsymm}, \cite[Corollary~5]{Lehtonen-unique}).
In this section, we will describe another class of functions with a unique identification minor that was not, to the best of the author's knowledge, previously known.

Let $\vect{a} = (a_1, \dots, a_n) \in A^n$.
We refer to the multiset $\multiset{a_1, \dots, a_n}$ of entries of $\vect{a}$ as the \emph{content} of $\vect{a}$.
Let $\ms \colon A^* \to \mathcal{M}(A)$ be the map $(a_1, \dots, a_n) \mapsto \multiset{a_1, \dots, a_n}$.
An element $a \in A$ is called a \emph{singleton} of $\vect{a}$, if $a$ occurs exactly once in $\vect{a}$.
Define the mapping $\singles \colon A^* \to A^\sharp$ by the following rule:
$\singles(\vect{a})$ is the tuple that lists the singletons of $\vect{a}$ in the order of their occurrence in $\vect{a}$.

\begin{remark}
\label{rem:ms-totsymm}
A function is determined by $\ms$ if and only if it is totally symmetric.
\end{remark}

Let
\[
\mathcal{Z}(A) := \{(M, \vect{a}) \in \mathcal{M}(A) \times A^\sharp : \forall a \in A \, (\text{$a$ occurs in $\vect{a}$} \iff \chi_M(a) = 1)\},
\]
and for each $n, \ell \in \IN$, let
\begin{align*}
\mathcal{Z}^{(n)}(A) &:= \{(M, \vect{a}) \in \mathcal{Z}(A) : \card{M} = n\}, \\
\mathcal{Z}^{(n)}_\ell(A) &:= \{(M, \vect{a}) \in \mathcal{Z}(A) : \text{$\card{M} = n$ and $\card{\vect{a}} = \ell$}\}.
\end{align*}
Note that if $\card{A} = k$, then $\mathcal{Z}^{(n)}_\ell(A) \neq \emptyset$ if and only if $\ell \in Z(k,n)$, where
\[
Z(k,n) :=
\begin{cases}
\{0, 1, \dots, k-1\}, & \text{if $k < n$,} \\
\{0, 1, \dots, n\} \setminus \{n-1\}, & \text{if $k \geq n$.}
\end{cases}
\]

Define the mapping $\cs \colon A^* \to \mathcal{Z}(A)$ by the rule
$\cs(\vect{a}) := (\ms(\vect{a}), \singles(\vect{a}))$.
(The name of the function $\cs$ is an initialism of ``content and singletons''.)
Note that $\cs$ is surjective onto $\mathcal{Z}(A)$, and the range of the restriction ${\cs}|_{A^n}$ is $\mathcal{Z}^{(n)}(A)$.
We also say that a function determined by $\cs$ is \emph{determined by content and singletons.}

\begin{example}
In order to illustrate the mappings $\ms$, $\singles$ and $\cs$, let $A$ be the set of lower\hyp{}case letters of the English alphabet. Below are shown the images under $\cs$ of some strings over $A$.
\begin{align*}
&\cs(\mathsf{mathematician}) = (\multiset{\mathsf{a}^3, \mathsf{c}, \mathsf{e}, \mathsf{h}, \mathsf{i}^2, \mathsf{m}^2, \mathsf{n}, \mathsf{t}^2}, \mathsf{hecn}) \\
&\cs(\mathsf{circumlocution}) = (\multiset{\mathsf{c}^3, \mathsf{i}^2, \mathsf{l}, \mathsf{m}, \mathsf{o}^2, \mathsf{n}, \mathsf{r}, \mathsf{t}, \mathsf{u}^2}, \mathsf{rmltn}) \\
&\cs(\mathsf{ambidextrously}) = (\multiset{\mathsf{a}, \mathsf{b}, \mathsf{d}, \mathsf{e}, \mathsf{i}, \mathsf{l}, \mathsf{m}, \mathsf{o}, \mathsf{r}, \mathsf{s}, \mathsf{t}, \mathsf{u}, \mathsf{x}, \mathsf{y}}, \mathsf{ambidextrously}) \\
&\cs(\mathsf{unprosperousness}) = (\multiset{\mathsf{e}^2, \mathsf{n}^2, \mathsf{o}^2, \mathsf{p}^2, \mathsf{r}^2, \mathsf{s}^4, \mathsf{u}^2}, \varepsilon) \\
\end{align*}
\end{example}

\begin{lemma}
\label{lem:minors-cs}
Let $f \colon A^n \to B$, $f = f^* \colon {\cs}|_{A^n}$ for some $f^* \colon \mathcal{Z}(A) \to B$.
Then for every $I = \{i, j\} \in \couples$ with $i < j$, it holds that $f_I = f^* \circ {\cs^{n-1}_i}$, where the map
$\cs^{n-1}_i \colon A^{n-1} \to \mathcal{Z}(A)$ is given by the rule
\[
\cs^{n-1}_i(a_1, \dots, a_{n-1}) := (\ms(a_1, \dots, a_{n-1}) \dotcup \multiset{a_i}, \singles(a_1, \dots, a_{i-1}, a_{i+1}, \dots, a_{n-1})),
\]
for all $(a_1, \dots, a_{n-1}) \in A^{n-1}$.
\end{lemma}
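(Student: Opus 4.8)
The plan is to unwind the definition of the identification minor $f_I$, reduce the assertion to a single pointwise identity of the form $\cs(\vect{b}) = \cs^{n-1}_i(a_1, \dots, a_{n-1})$, and then verify this identity coordinate by coordinate, treating the content and the tuple of singletons separately. Concretely, fix $I = \{i, j\} \in \couples$ with $i < j$ and an arbitrary tuple $(a_1, \dots, a_{n-1}) \in A^{n-1}$. By the explicit description of identification minors recalled in Section~\ref{sec:preliminaries}, $f_I(a_1, \dots, a_{n-1}) = f(\vect{b})$, where
\[
\vect{b} := (a_1, \dots, a_{j-1}, a_i, a_j, \dots, a_{n-1}) \in A^n
\]
is obtained from $(a_1, \dots, a_{n-1})$ by inserting an extra copy of the entry $a_i$ at position $j$; note that $a_i$ then occupies at least the two positions $i$ and $j$ of $\vect{b}$. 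Since $f = f^* \circ {\cs}|_{A^n}$, we get $f_I(a_1, \dots, a_{n-1}) = f^*(\cs(\vect{b}))$, so it is enough to prove $\cs(\vect{b}) = \cs^{n-1}_i(a_1, \dots, a_{n-1})$, which, after unfolding the definition of $\cs$, means
\[
\ms(\vect{b}) = \ms(a_1, \dots, a_{n-1}) \dotcup \multiset{a_i}
\qquad\text{and}\qquad
\singles(\vect{b}) = \singles(a_1, \dots, a_{i-1}, a_{i+1}, \dots, a_{n-1}).
\]

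The content identity is immediate from the definitions: the multiset of entries of $\vect{b}$ is the multiset of entries of $(a_1, \dots, a_{n-1})$ with one extra occurrence of $a_i$ adjoined, which is exactly what the join on the right-hand side computes.

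The singletons identity is the delicate part, and it is the step I expect to be the main obstacle. The key observation is that passing from $(a_1, \dots, a_{n-1})$ to $\vect{b}$ alters the multiplicity of a single symbol, namely $a_i$: since $a_i$ occupies two positions of $\vect{b}$, it is not a singleton of $\vect{b}$; and for every symbol $c \neq a_i$, the number of occurrences of $c$ is the same in $\vect{b}$, in $(a_1, \dots, a_{n-1})$, and---because position $i$ of $(a_1, \dots, a_{n-1})$ carries $a_i \neq c$---also in $(a_1, \dots, a_{i-1}, a_{i+1}, \dots, a_{n-1})$. It follows that the singletons of $\vect{b}$ are exactly the singletons of $(a_1, \dots, a_{n-1})$ other than $a_i$. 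Two things then remain to be matched with the right-hand side: that these symbols occur in $\vect{b}$ in the same relative order as in $(a_1, \dots, a_{i-1}, a_{i+1}, \dots, a_{n-1})$---true because inserting or deleting occurrences of $a_i$ does not disturb the order of the other entries---and that the tuple $\singles(a_1, \dots, a_{i-1}, a_{i+1}, \dots, a_{n-1})$ really is the list of precisely those symbols (equivalently, that $\cs^{n-1}_i$ takes values in $\mathcal{Z}(A)$). The point demanding the most care is the bookkeeping around the symbol $a_i$ itself when it is repeated in $(a_1, \dots, a_{n-1})$, so that it is excluded consistently on both sides. A clean way to make all of this rigorous is to argue at the level of positions: set up an order-preserving bijection between the positions of $\vect{b}$ not carrying $a_i$ and the positions of $(a_1, \dots, a_{i-1}, a_{i+1}, \dots, a_{n-1})$ not carrying $a_i$, observe that it respects the symbols sitting at those positions, and read off that the resulting subsequences of singletons coincide.
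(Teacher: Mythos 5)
Your route is the same as the paper's (whose proof simply declares the computation straightforward): unwind $f_I(\vect{a}) = f^*(\cs(\vect{a}\delta_I))$ and compare content and singletons separately. The content identity is fine, and your identification of the singletons of $\vect{b} := \vect{a}\delta_I$ as the singletons of $(a_1,\dots,a_{n-1})$ other than $a_i$ is correct. The gap sits exactly at the step you defer as ``bookkeeping around the symbol $a_i$'': the tuple $\singles(a_1,\dots,a_{i-1},a_{i+1},\dots,a_{n-1})$ is \emph{not} in general the list of those symbols, and no amount of bookkeeping will make it so. If $a_i$ occurs exactly twice in $(a_1,\dots,a_{n-1})$, then it occurs exactly once in the truncated $(n-2)$-tuple and hence appears in $\singles(a_1,\dots,a_{i-1},a_{i+1},\dots,a_{n-1})$, while it occurs three times in $\vect{b}$ and is therefore absent from $\singles(\vect{b})$. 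A minimal instance: $n=3$, $(a_1,a_2)=(c,c)$, $I=\{1,2\}$ gives $\vect{b}=(c,c,c)$, so $\singles(\vect{b})=\varepsilon$, whereas $\singles(a_2)=c$; note also that $(\multiset{c^3},c)\notin\mathcal{Z}(A)$, so $f^*\circ\cs^{2}_{1}$ is not even defined at $(c,c)$. The order-preserving bijection you propose compares only positions \emph{not} carrying $a_i$ and therefore cannot detect this failure.

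So the proposal cannot be completed as written --- but the obstruction lies in the statement itself, which the paper's one-line ``straightforward calculation'' glosses over. The correct second component of $\cs^{n-1}_i(a_1,\dots,a_{n-1})$ is the string obtained from $\singles(a_1,\dots,a_{i-1},a_{i+1},\dots,a_{n-1})$ by deleting $a_i$ if it occurs there, equivalently the list of those singletons of $(a_1,\dots,a_{n-1})$ that are distinct from $a_i$, in order of occurrence. With that amendment your argument closes verbatim, and the subsequent Lemma~\ref{lem:cs-unique} is unaffected, since the identity $\cs^{n-1}_i(\vect{a}) = \cs^{n-1}_{n-1}(\vect{a}\xi_i)$ used there holds equally for the corrected maps.
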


\begin{proof}
Straighforward calculations show that for all $\vect{a} = (a_1, \dots, a_{n-1}) \in A^{n-1}$,
\begin{align*}
f_I(\vect{a})
&= f(\vect{a} \delta_I)
= f^* \circ \cs(\vect{a} \delta_I)
= f^* \circ (\ms(\vect{a} \delta_I), \singles(\vect{a} \delta_I)) \\
&= f^* \circ (\ms(\vect{a}) \dotcup \multiset{a_i}, \singles(a_1, \dots, a_{i-1}, a_{i+1}, \dots, a_{n-1}))
= f^* \circ \cs^{n-1}_i(\vect{a}).
\qedhere
\end{align*}
\end{proof}

\begin{lemma}
\label{lem:cs-unique}
Any function determined by $\cs$ has a unique identification minor.
\end{lemma}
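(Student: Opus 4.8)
I want to show that if $f \colon A^n \to B$ is determined by $\cs$, then $f_I \simeq f_J$ for all $I, J \in \couples$. By Lemma~\ref{lem:minors-cs}, writing $f = f^* \circ {\cs}|_{A^n}$, the identification minor $f_I$ for $I = \{i,j\}$ with $i < j$ equals $f^* \circ \cs^{n-1}_i$, which depends on $i$ only (not on $j$). So it suffices to show that for any two indices $i, i' \in \nset{n-1}$, the maps $\cs^{n-1}_i$ and $\cs^{n-1}_{i'}$ differ only by a permutation of coordinates: there is $\rho \in \symm{n-1}$ with $\cs^{n-1}_i = \cs^{n-1}_{i'} \circ \ontuples{\rho}$. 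Then $f_I = f^* \circ \cs^{n-1}_{i'} \circ \ontuples{\rho} = f_{I'} \circ \ontuples{\rho}$ where $I' = \{i', i'+1\}$ (or any pair with smaller element $i'$), giving $f_I \simeq f_{I'} \simeq f_J$.

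First I would reduce to adjacent swaps: it is enough to handle the case $i' = i+1$, since a general permutation of the ``source index'' can be built up by transpositions of consecutive values, and similarity is transitive. So fix $i \in \nset{n-2}$ and compare $\cs^{n-1}_i$ with $\cs^{n-1}_{i+1}$. Recall
\[
\cs^{n-1}_i(a_1, \dots, a_{n-1}) = \bigl(\ms(a_1, \dots, a_{n-1}) \dotcup \multiset{a_i}, \ \singles(a_1, \dots, a_{i-1}, a_{i+1}, \dots, a_{n-1})\bigr).
\]
I would take $\rho \in \symm{n-1}$ to be the transposition $(i \; i+1)$, so that $\ontuples{\rho}$ swaps the $i$\hyp{}th and $(i+1)$\hyp{}st entries of its argument. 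I then must check two things. The multiset component: $\ms(a_1,\dots,a_{n-1})$ is visibly invariant under swapping any two entries, and the extra element $\multiset{a_i}$ in $\cs^{n-1}_i$ becomes $\multiset{a_{i+1}}$ after applying $\rho$ in $\cs^{n-1}_{i+1}$; so the multiset parts of $\cs^{n-1}_i(\vect a)$ and $\cs^{n-1}_{i+1}(\vect a \rho)$ agree. The singleton component: $\singles$ applied to the tuple with the $i$\hyp{}th entry removed, versus $\singles$ applied to $\vect a \rho$ with its $(i+1)$\hyp{}st entry removed — but $\vect a \rho$ with position $i+1$ removed is exactly $\vect a$ with position $i$ removed (since $\rho$ only swapped $i$ and $i+1$), so these tuples are literally identical. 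Hence $\cs^{n-1}_i = \cs^{n-1}_{i+1} \circ \ontuples{\rho}$, and composing with $f^*$ gives $f_{\{i,i+1\}} = f_{\{i+1,i+2\}} \circ \ontuples{\rho}$, so $f_{\{i,i+1\}} \simeq f_{\{i+1,i+2\}}$.

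Finally I would chain these adjacent similarities. For arbitrary $I = \{i,j\}$, $J = \{i',j'\}$ with $i < j$ and $i' < j'$, Lemma~\ref{lem:minors-cs} gives $f_I = f^* \circ \cs^{n-1}_i$ and $f_J = f^* \circ \cs^{n-1}_{i'}$; applying the adjacent-swap step repeatedly yields a permutation $\sigma \in \symm{n-1}$ with $\cs^{n-1}_i = \cs^{n-1}_{i'} \circ \ontuples{\sigma}$, whence $f_I = f_J \circ \ontuples{\sigma}$, i.e.\ $f_I \simeq f_J$. This establishes that $f$ has a unique identification minor. I do not expect a genuine obstacle here: the content is conceptually the analogue of the key property of $\ofo$ noted at the end of Section~\ref{subsec:detby}, namely that identifying a pair of arguments does not change the relevant data up to a harmless reindexing. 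The only point requiring a little care is bookkeeping the index shifts hidden inside $\delta_I$ and $\cs^{n-1}_i$ correctly — in particular that the ``removed coordinate'' in the singleton component and the ``doubled coordinate'' in the content component interact cleanly with the adjacent transposition — but this is a routine verification rather than a real difficulty.
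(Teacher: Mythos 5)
Your proof is correct and follows essentially the same route as the paper: both reduce via Lemma~\ref{lem:minors-cs} to showing that the maps $\cs^{n-1}_i$ differ from one another only by a permutation of coordinates, so that all identification minors are similar. The paper does this in a single step using the cycle $(i \; i+1 \; \cdots \; n-1)$ to compare every $f_I$ with the fixed reference minor $f_{\{n-1,n\}}$, whereas you factor that cycle into adjacent transpositions and chain the resulting similarities --- an immaterial difference.
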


\begin{proof}
Assume that $f = f^* \circ {\cs}|_{A^n}$ for some $f^* \colon \mathcal{Z}(A) \to B$.
Let $I = \{i, j\} \in \couples$ with $i < j$, and let $K := \{n - 1, n\}$.
Let $\xi_i \in \symm{n-1}$ be the cycle $(i \; i + 1\; \dots \; n - 1)$.
For every $\vect{a} = (a_1, \dots, a_{i-1}) \in A^{n-1}$, it clearly holds that
\[
\cs^{n-1}_i(a_1, \dots, a_{n-1})
= \cs^{n-1}_{n-1}(a_1, \dots, a_{i-1}, a_{i+1}, \dots, a_{n-1}, a_i),
\]
i.e., $\cs^{n-1}_i(\vect{a}) = \cs^{n-1}_{n-1}(\vect{a} \xi_i)$.
Consequently, by Lemma~\ref{lem:minors-cs}, for all $\vect{a} \in A^{n-1}$,
\[
f_I(\vect{a})
= f^* \circ {\cs^{n-1}_i}(\vect{a})
= f^* \circ {\cs^{n-1}_{n-1}}(\vect{a} \xi_i)
= f_K(\vect{a} \xi_i).
\]
This means that $f_I \simeq f_K$.
Since the choice of $I \in \couples$ was arbitrary, the symmetry and transitivity of $\simeq$ imply that $f_I \simeq f_J$ for all $I, J \in \couples$.
\end{proof}

Before proceeding any further, let us verify that we have actually discovered something new. Does the class of functions determined by $\cs$ provide examples of functions that are not already subsumed by the previously known classes of functions with a unique identification minor (namely, $2$\hyp{}set\hyp{}transitive functions and functions determined by $\ofo$)?
This question is answered in the following two lemmas.

\begin{lemma}
\label{lem:cs-new-2}
Assume that $\card{A} = 2$ and $f \colon A^n \to B$ is determined by $\cs$.
Then $n \leq 2$ or $f$ is totally symmetric.
\end{lemma}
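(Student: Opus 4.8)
The plan is to show that under the hypotheses $\card{A} = 2$ and $n \geq 3$ the restriction ${\cs}|_{A^n}$ factors through ${\ms}|_{A^n}$; once this is established, $f$ is determined by $\ms$, hence totally symmetric by Remark~\ref{rem:ms-totsymm}, and we are done. The alternative $n \leq 2$ is the escape clause of the statement, so throughout we may assume $n > 2$.

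First I would write $A = \{a, b\}$ with $a \neq b$ and record the key observation: a tuple $\vect{c} \in A^n$ has at most one singleton, because if both $a$ and $b$ occurred exactly once in $\vect{c}$ then $n = \card{\vect{c}} = 2$, contradicting $n \geq 3$. Hence $\singles(\vect{c})$ is a string of length at most $1$, so its order of occurrence carries no information, and $\singles(\vect{c})$ is completely determined by \emph{which} element of $A$, if any, is a singleton of $\vect{c}$. Since an element $x \in A$ is a singleton of $\vect{c}$ precisely when $\chi_{\ms(\vect{c})}(x) = 1$, the string $\singles(\vect{c})$ depends only on $\ms(\vect{c})$.

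Concretely, I would define $s \colon \{M \in \mathcal{M}(A) : \card{M} = n\} \to A^\sharp$ by $s(M) := (a)$ if $\chi_M(a) = 1$, $s(M) := (b)$ if $\chi_M(b) = 1$, and $s(M) := \varepsilon$ otherwise; this is well defined because $\card{M} = n \geq 3$ forbids having $\chi_M(a) = 1$ and $\chi_M(b) = 1$ simultaneously. By the previous paragraph, $\singles(\vect{c}) = s(\ms(\vect{c}))$ for every $\vect{c} \in A^n$, so, setting $g(M) := (M, s(M))$, we obtain
\[
\cs(\vect{c}) = (\ms(\vect{c}), \singles(\vect{c})) = g(\ms(\vect{c})) \qquad \text{for all } \vect{c} \in A^n .
\]
Since the image of ${\ms}|_{A^n}$ is exactly $\{M \in \mathcal{M}(A) : \card{M} = n\}$ and $g(\ms(\vect{c})) = \cs(\vect{c}) \in \mathcal{Z}(A)$ there, the composite $f^* \circ g$ is defined on this image, and therefore $f = f^* \circ {\cs}|_{A^n} = (f^* \circ g) \circ {\ms}|_{A^n}$, i.e., $f$ is determined by $\ms$. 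By Remark~\ref{rem:ms-totsymm}, $f$ is totally symmetric.

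There is essentially no obstacle here: the only point deserving care is the disjointness of the cases defining $s$, which is precisely where both hypotheses $\card{A} = 2$ and $n \geq 3$ are used. For $n \leq 2$ the statement genuinely fails, since $(a,b)$ and $(b,a)$ have the same content but different singleton strings, so a function separating them is determined by $\cs$ but not totally symmetric; this is why the small arities are excluded.
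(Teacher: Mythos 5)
Your proof is correct and follows essentially the same route as the paper's: both arguments show that for $\card{A}=2$ and $n\geq 3$ the second component of $\cs(\vect{a})$ is determined by the first (the paper by listing the range $\mathcal{Z}^{(n)}(A)$ explicitly, you by the equivalent observation that a tuple can have at most one singleton), conclude that $f$ is determined by $\ms$, and invoke Remark~\ref{rem:ms-totsymm}. No issues.
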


\begin{proof}
Assume that $A = \{0, 1\}$, $n \geq 3$, and $f = f^* \circ {\cs}|_{A^n}$ for some $f^* \colon \mathcal{Z}^{(n)}(A) \to B$.
It is easy to see that the range $\mathcal{Z}^{(n)}(A)$ of ${\cs}|_{A^n}$ equals
\[
\{(\multiset{0^n}, \varepsilon),
(\multiset{1^n}, \varepsilon),
(\multiset{0^{n-1}, 1}, 1),
(\multiset{0, 1^{n-1}}, 0)\}
\cup
\{(\multiset{0^\ell, 1^{n-\ell}}, \varepsilon) : 2 \leq \ell \leq n - 2\}.
\]
It is clear that if $M \in \mathcal{M}(A)$ and $\vect{u}, \vect{v} \in A^\sharp$ are such that $(M, \vect{u}), (M, \vect{v}) \in \mathcal{Z}^{(n)}(A)$, then $\vect{u} = \vect{v}$.
Therefore each couple $(M, \vect{u}) \in \mathcal{Z}^{(n)}(A)$ is uniquely specified by its first component $M$ alone, while the second component $\vect{u}$ is irrelevant.
Therefore, $f(\vect{a})$ depends only on $\ms(\vect{a})$, i.e., $f$ is determined by $\ms$. By Remark~\ref{rem:ms-totsymm}, $f$ is totally symmetric.
\end{proof}

The totally symmetric functions are $2$\hyp{}set\hyp{}transitive, and it is obvious that all unary and binary functions have a unique identification minor. Thus, functions determined by $\cs$ did not really bring anything new when $\card{A} = 2$.
The situation is completely different when $\card{A} \geq 3$.

\begin{lemma}
\label{lem:cs-new}
Assume that $A$ and $B$ are sets with $\card{A} = k \geq 3$ and $\card{B} \geq 2$.
Then, for every $n \geq k + 1$, there exists a function $f \colon A^n \to B$ that is determined by $\cs$ but
it is not similar to any function determined by $\ofo$, nor is it $2$\hyp{}set\hyp{}transitive.
\end{lemma}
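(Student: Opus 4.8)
The plan is to exhibit, for each $n \geq k+1$, a single explicit function $f \colon A^n \to B$ determined by $\cs$ that simultaneously witnesses both negative assertions. Fix three distinct elements $x,y,z \in A$ (possible since $k \geq 3$) and two distinct elements $\beta_1, \beta_2 \in B$ (possible since $\card{B} \geq 2$), and set $M := \multiset{x,y,z^{n-2}}$; this is a multiset of cardinality $n$ (note $n-2 \geq 2$) whose only elements of multiplicity $1$ are $x$ and $y$. Define $f^* \colon \mathcal{Z}^{(n)}(A) \to B$ by $f^*(M, yx) := \beta_2$ and $f^*(N, \vect{u}) := \beta_1$ for every other $(N,\vect{u}) \in \mathcal{Z}^{(n)}(A)$, and put $f := f^* \circ {\cs}|_{A^n}$; by Lemma~\ref{lem:cs-unique} this $f$ automatically has a unique identification minor. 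I would first record the elementary bookkeeping: $(M,xy)$ and $(M,yx)$ both belong to $\mathcal{Z}^{(n)}(A)$ and are its only members with first component $M$, so a tuple $\vect{c} \in A^n$ has $f(\vect{c}) = \beta_2$ exactly when $\ms(\vect{c}) = M$ and $y$ precedes $x$ in $\vect{c}$. In particular $f^{-1}(\beta_2)$ is nonempty and every tuple in it has content $M$.

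For the failure of $2$-set-transitivity I would prove the stronger fact $\Inv f = \{\mathrm{id}\}$. Given $\sigma \in \Inv f$ and an arbitrary pair $p < q$ in $\nset{n}$, apply $f$ to the content-$M$ tuple $\vect{a}$ having $x$ in position $p$, $y$ in position $q$, and $z$ elsewhere: then $\cs(\vect{a}) = (M,xy)$, whereas $\cs(\vect{a}\sigma) = (M,xy)$ or $(M,yx)$ according as $\sigma^{-1}(p) < \sigma^{-1}(q)$ or not. Since $f(\vect{a}) = f(\vect{a}\sigma)$ forces the former, $\sigma^{-1}$ preserves the natural order of $\nset{n}$, whence $\sigma = \mathrm{id}$. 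As $n \geq 4$, the trivial group does not act transitively on $\couples$, so $f$ is not $2$-set-transitive.

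For the failure of similarity to a function determined by $\ofo$ I would argue by contradiction. Suppose $f \simeq g$ with $g = g^* \circ {\ofo}|_{A^n}$, so $f = g \circ \ontuples{\tau}$ for a bijection $\tau \colon \nset{n} \to \nset{n}$. Because $\ontuples{\tau}$ is a content-preserving bijection of $A^n$ and $f^{-1}(\beta_2)$ is a nonempty set all of whose members have content $M$, the set $g^{-1}(\beta_2) = \ontuples{\tau}[f^{-1}(\beta_2)]$ is nonempty and likewise consists only of content-$M$ tuples. Pick $\vect{c} \in g^{-1}(\beta_2)$ and let $w := \ofo(\vect{c})$; then $g^*(w) = \beta_2$, so every $n$-tuple with $\ofo$-image $w$ lies in $g^{-1}(\beta_2)$ and hence has content $M$. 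But $\vect{c}$ has content $M$, whose support is $\{x,y,z\}$, so $w$ is one of the six orderings of $x,y,z$; writing $w = w_1 w_2 w_3$, the two tuples $w_1 w_2 w_3 w_1^{n-3}$ and $w_1 w_2 w_3 w_2^{n-3}$ are both $n$-tuples with $\ofo$-image $w$ (using $n \geq 4$), yet their contents $\multiset{w_1^{n-2}, w_2, w_3}$ and $\multiset{w_1, w_2^{n-2}, w_3}$ are distinct since $n-2 \geq 2$. They cannot both equal $M$, the required contradiction.

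The part I expect to be the real obstacle is this last one: a naive cardinality comparison between $f^{-1}(\beta_2)$ and unions of $\ofo$-fibres fails to give a contradiction (the relevant counts coincide, essentially by Pascal's identity), so the argument must instead exploit that $f^{-1}(\beta_2)$ is concentrated on the single content $M$, whereas every $\ofo$-fibre of a short string, once intersected with $A^n$ for $n$ exceeding the string's length, necessarily spreads across several contents. Everything else — the membership checks for $\mathcal{Z}^{(n)}(A)$, the description of $f^{-1}(\beta_2)$, and the invariance computation — is routine once this particular $f$ has been singled out.
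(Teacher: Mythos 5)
Your proposal is correct and follows essentially the same strategy as the paper: both construct $f$ as the characteristic function of a single element of $\mathcal{Z}^{(n)}(A)$ whose multiset component has a repeated entry, show that $\Inv f$ is trivial because any non\hyp{}identity permutation reverses the relative order of two singletons, and refute similarity to an $\ofo$\hyp{}determined function by exhibiting two tuples with the same $\ofo$\hyp{}image but different contents. The only difference is the choice of witness (you use $\multiset{x,y,z^{n-2}}$ with two singletons, the paper uses $\multiset{1^{n-k+1},2,\dots,k}$ with $k-1$ singletons in increasing order), which does not change the substance of the argument.
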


\begin{proof}
Assume, without loss of generality, that $A = \{1, \dots, k\}$ and $0, 1 \in B$.
Let $f^* \colon \mathcal{Z}^{(n)}(A) \to B$ be the function that maps
$(\multiset{1^{n-k+1}, 2, 3, \dots, k}, 23 \dots k)$ to $1$ and everything else to $0$.
Define $f \colon A^n \to B$ as $f = f^* \circ {\cs}|_{A^n}$.
Then $f$ is determined by $\cs$ by definition.
Note that $f^{-1}(1)$ comprises precisely those $n$\hyp{}tuples that have $n-k+1$ occurrences of $1$ and exactly one occurrence of every other element of $A$ and in which the entries distinct from $1$ appear in increasing order.

Suppose, to the contrary,
that $f$ is similar to a function determined by $\ofo$,
that is, $f = f' \circ {\ofo}|_{A^n} \circ \ontuples{\sigma}$ for some $f' \colon A^\sharp \to B$ and $\sigma \in \symm{n}$. Let $\vect{u} = 1 \dots 1 2 3 \dots k$, where the element $1$ is repeated $n - k + 1$ times. It is clear that there exists a tuple $\vect{v} \in A^n$ such that $\vect{v}$ has at least two occurrences of $2$ and $\ofo(\vect{u} \sigma) = \ofo(\vect{v} \sigma)$ (take, for example, the tuple $(\vect{w} 2 \dots 2) \sigma^{-1}$, where $\vect{w} = \ofo(\vect{u} \sigma)$). Since $f = f' \circ {\ofo}|_{A^n} \circ \ontuples{\sigma}$, the equality $f(\vect{u}) = f'(\ofo(\vect{u} \sigma)) = f'(\ofo(\vect{v} \sigma)) = f(\vect{v})$ holds.
On the other hand, since $\cs(\vect{u}) = (\multiset{1^{n-k+1}, 2, 3, \dots, k}, 23 \dots k) \neq \cs(\vect{v})$, we have $f(\vect{u}) = f^*(\cs(\vect{u})) = 1 \neq 0 = f^*(\cs(\vect{v})) = f(\vect{v})$.
We have reached a contradiction.

We claim that the invariance group of $f$ is trivial, and hence $f$ is not $2$\hyp{}set\hyp{}transitive.
Let $\sigma \in \Inv f$.
Suppose, to the contrary, that $\sigma \neq \id$.
Then there exist $i, j \in \nset{n}$ such that $i < j$ and $\sigma(j) < \sigma(i)$.
It is easy to see that $f^{-1}(1)$ contains a tuple $\vect{u}$ such that $u_{\sigma(j)} = p$ and $u_{\sigma(i)} = q$ for some $p$ and $q$ distinct from $1$.
In fact, we have $p < q$.
But then the relative order of the entries $p$ and $q$ is reversed in $\vect{u} \sigma$, so $f(\vect{u} \sigma) = 0 \neq 1 = f(\vect{u})$, a contradiction.
\end{proof}

\begin{proposition}[{\cite[Proposition~7]{Lehtonen-unique}}]
\label{prop:2st-ofo}
Assume that $n > \card{A} + 1$, and let $f \colon A^n \to B$. Then the following conditions are equivalent.
\begin{enumerate}[\rm (i)]
\item $f$ is totally symmetric and determined by $\ofo$.
\item $f$ is $2$\hyp{}set\hyp{}transitive and determined by $\ofo$.
\item $f$ is determined by $\ofo$ and for all $I, J \in \couples$, there exists a bijection \linebreak $\pi_{IJ} \colon \nset{n-1} \to \nset{n-1}$ such that $\pi_{IJ}(\min J) = \min I$ and $f(\vect{a} \delta_I) = f(\vect{a} \pi_{IJ} \delta_J)$ for all $\vect{a} \in A^{n-1}$.
\item $f$ is determined by $\supp$.
\end{enumerate}
\end{proposition}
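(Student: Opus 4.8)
The plan is to prove the implications (i) $\Rightarrow$ (ii) $\Rightarrow$ (iii) $\Rightarrow$ (iv) $\Rightarrow$ (i). The implication (i) $\Rightarrow$ (ii) is trivial because $\symm{n}$ is $2$-set-transitive. For (iv) $\Rightarrow$ (i): from $\supp(\vect{a}) = \set(\ms(\vect{a})) = \supp(\ofo(\vect{a}))$ for all $\vect{a}$ one sees that a function determined by $\supp$ is determined both by $\ms$ --- hence totally symmetric by Remark~\ref{rem:ms-totsymm} --- and by $\ofo$. I shall also use the converse that a function determined by both $\ms$ and $\ofo$ is determined by $\supp$: knowing the content and the order of first occurrence of the entries of a tuple determines its underlying set, and such a function takes the same value on any two tuples with the same underlying set, since these are linked by a chain of reorderings that stay within a single content class. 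Thus (i) and (iv) are in fact directly equivalent; only the implication (iii) $\Rightarrow$ (iv) will use the hypothesis $n > \card{A} + 1$.

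For (ii) $\Rightarrow$ (iii), write $f = f^{*} \circ {\ofo}|_{A^{n}}$. Since $\ofo(\vect{a}\delta_{I}) = \ofo(\vect{a})$, every identification minor of $f$ equals one and the same function $g := f^{*} \circ {\ofo}|_{A^{n-1}}$, and for any $I, J \in \couples$ and any bijection $\pi$ of $\nset{n-1}$ we have $f(\vect{a}\delta_{I}) = f^{*}(\ofo(\vect{a}))$ and $f(\vect{a}\pi\delta_{J}) = f^{*}(\ofo(\vect{a}\pi))$. Hence condition (iii) is equivalent to: $f$ is determined by $\ofo$ and, for all $p, q \in \nset{n-1}$, there is $\pi \in \Inv g$ with $\pi(q) = p$; that is, $\Inv g$ is transitive on $\nset{n-1}$. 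To derive this from the $2$-set-transitivity of $\Inv f$, fix $p \in \nset{n-1}$ and choose $\sigma \in \Inv f$ with $\sigma(\{n-1, n\}) = \{p, n\}$. Then $\sigma$ maps the unique nontrivial fibre of $\delta_{\{n-1,n\}}$ onto that of $\delta_{\{p,n\}}$, so $\delta_{\{p,n\}} \circ \sigma = \rho \circ \delta_{\{n-1,n\}}$ for a unique bijection $\rho$ of $\nset{n-1}$; since $f = f \circ \ontuples{\sigma}$ this yields $f_{\{p,n\}} = f_{\{n-1,n\}} \circ \ontuples{\rho}$, i.e. $g = g \circ \ontuples{\rho}$, so $\rho \in \Inv g$, and evaluating $\delta_{\{p,n\}} \circ \sigma = \rho \circ \delta_{\{n-1,n\}}$ at $n-1$ gives $\rho(n-1) = p$ (regardless of whether $\sigma(n-1)$ is $p$ or $n$). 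As $p$ was arbitrary, $\Inv g$ is transitive.

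The main work is (iii) $\Rightarrow$ (iv). By the preceding paragraph, (iii) says that $g := f^{*} \circ {\ofo}|_{A^{n-1}}$ has invariance group transitive on $\nset{n-1}$. Since $n - 1 > \card{A}$, the restrictions ${\ofo}|_{A^{n}}$ and ${\ofo}|_{A^{n-1}}$ have the same range, namely all nonempty tuples over $A$ with no repeated entries; consequently $f$ is determined by $\supp$ if and only if $g$ is, and by the equivalence (i) $\Leftrightarrow$ (iv) this holds iff $g$ is totally symmetric. Writing $m := n - 1$, it therefore suffices to show: \emph{if $g \colon A^{m} \to B$ is determined by $\ofo$, $m > \card{A}$, and $\Inv g$ is transitive on $\nset{m}$, then $\Inv g = \symm{m}$.} Suppose not. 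As $\symm{m}$ is generated by transpositions of adjacent coordinates, some $(p \; p{+}1) \notin \Inv g$, so $g(\vect{a}) \neq g(\vect{a} \cdot (p \; p{+}1))$ for some $\vect{a} \in A^{m}$. A short case analysis of how interchanging the entries in positions $p$ and $p+1$ affects $\ofo$ shows that necessarily $a_{p}$ and $a_{p+1}$ are distinct symbols that do not occur among $a_{1}, \dots, a_{p-1}$, and that then $\ofo(\vect{a} \cdot (p \; p{+}1))$ is obtained from $\ofo(\vect{a}) = (z_{1}, \dots, z_{\ell})$ by interchanging the two consecutive entries $z_{\rho}$, $z_{\rho+1}$. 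Thus there are a tuple $\vect{u} = (z_{1}, \dots, z_{\ell})$ with no repeated entries, $\ell \leq \card{A} < m$, and an index $\rho \in \nset{\ell-1}$ such that $f^{*}(\vect{u}) \neq f^{*}(\vect{u}')$, where $\vect{u}' := (z_{1}, \dots, z_{\rho-1}, z_{\rho+1}, z_{\rho}, z_{\rho+2}, \dots, z_{\ell})$.

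It remains to contradict the transitivity of $\Inv g$, and this is where I expect the real obstacle to lie. The idea is to use the $m - \ell \geq 1$ surplus coordinates: one picks a test tuple $\vect{c} \in A^{m}$ lying in the $\ofo$-fibre of $\vect{u}$ --- obtained from $\vect{u}$ by filling the surplus positions with a single, suitably chosen symbol --- so that for every $\sigma \in \Inv g$ one has $g(\vect{c}\sigma) = g(\vect{c}) = f^{*}(\vect{u}) \neq f^{*}(\vect{u}')$ and hence $\ofo(\vect{c}\sigma) \neq \vect{u}'$. Expressing, in terms of the first-occurrence positions $\sigma^{-1}(1), \dots$ together with $\min \sigma^{-1}(\{\ell, \dots, m\})$, precisely when $\ofo(\vect{c}\sigma)$ can equal $\vect{u}'$, one reads off that this never happening pins down an order relation between two fixed points of $\nset{m}$ that every $\sigma \in \Inv g$ must respect, contradicting transitivity. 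For example, in the base case $\ell = 2$ (so $\vect{u} = (z_{1}, z_{2})$ and $\vect{u}' = (z_{2}, z_{1})$) one takes $\vect{c} = (z_{1}, z_{2}, z_{2}, \dots, z_{2})$; then $\ofo(\vect{c}\sigma)$ equals $\vect{u}$ if $\sigma$ fixes the point $1$ and equals $\vect{u}'$ otherwise, so $\Inv g$ fixes $1$ --- impossible if $\Inv g$ is transitive and $m \geq 2$. Carrying out the analogous, but more delicate, bookkeeping for general $\ell$ and $\rho$ and extracting in each case an intransitivity obstruction from a single test tuple is, I expect, the main technical hurdle. Granting it, $\Inv g = \symm{m}$, so $g$ and hence $f$ is determined by $\supp$, completing the proof.
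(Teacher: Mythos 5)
First, a remark on the comparison itself: the paper does not prove Proposition~\ref{prop:2st-ofo} at all --- it is imported verbatim from \cite[Proposition~7]{Lehtonen-unique} --- so there is no internal proof to measure yours against, and your attempt has to stand on its own. Most of your reductions do stand: (i)~$\Rightarrow$~(ii) and (iv)~$\Rightarrow$~(i) are fine, the unconditional equivalence of (i) and (iv) is correct (though ``linked by a chain of reorderings within a single content class'' is not literally right, since equal support does not give equal content; the argument via reordering each tuple so that its $\ofo$-image is a canonical enumeration of its support is what you actually need), and your translation of (ii) and (iii) into statements about $g := f^* \circ {\ofo}|_{A^{n-1}}$ --- namely that (iii) amounts to $\Inv g$ being transitive on $\nset{n-1}$, and that $2$\hyp{}set\hyp{}transitivity of $\Inv f$ forces this via $\delta_{\{p,n\}} \circ \sigma = \rho \circ \delta_{\{n-1,n\}}$ --- is correct.

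The gap is exactly where you locate it, in (iii)~$\Rightarrow$~(iv), but it is worse than ``delicate bookkeeping'': the single\hyp{}test\hyp{}tuple strategy you sketch provably fails for $\ell \geq 3$. Write $m = n-1$, $\vect{u} = (z_1,\dots,z_\ell)$, and let $\vect{u}'$ swap $z_\rho, z_{\rho+1}$. Your padded tuple is $\vect{c} = (z_1,\dots,z_\ell,z_\ell,\dots,z_\ell)$. The natural cyclic group $\gensg{(1\;2\;\cdots\;m)}$ is transitive, yet the set $\{\ofo(\vect{c}\sigma) : \sigma \in \gensg{(1\;2\;\cdots\;m)}\}$ consists only of cyclic shifts of $\vect{u}$: for $\vect{u} = 123$ and $\vect{c} = (1,2,3,3)$ in $A^4$ one obtains exactly $\{123, 231, 312\}$, which never contains $\vect{u}' = 213$. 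So the condition ``$\ofo(\vect{c}\sigma) \neq \vect{u}'$ for all $\sigma$'' is compatible with transitivity and does not pin down any order relation between two fixed points; no contradiction can be read off from this test tuple. To force $f^*(\vect{u}) = f^*(\vect{u}')$ you must exploit \emph{other} preimages of $\vect{u}$ under $\ofo$, in which a repeated letter is positioned so that the group's motion actually produces the adjacent swap --- e.g.\ $(1,2,1,3)$ is carried by the $4$\hyp{}cycle to $(2,1,3,1)$, whose $\ofo$-image is $213$ --- and you must show that for an \emph{arbitrary} transitive group some such preimage and group element exist. That argument, which is the real content of the implication, is missing, and the mechanism you conjecture for it (a fixed order constraint violated by transitivity) is not the right invariant.
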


\begin{proposition}
\label{prop:ofo-and-cs}
Let $f \colon A^n \to B$.
Then $f$ is similar to a function determined by $\ofo$ and similar to a function determined by $\cs$
if and only if $f|_{\Aneq}$ is determined by $\supp$.
\end{proposition}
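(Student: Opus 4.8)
The plan is to prove both implications. The backward direction is the easy one: if $f|_{\Aneq}$ is determined by $\supp$, then I want to exhibit a function similar to $f$ that is determined by $\cs$, and one similar to $f$ that is determined by $\ofo$. Actually, more is true — I expect that $f$ itself (up to similarity) can be arranged so that its values on $\Anneq$ are dictated by the order of arguments while its values on $\Aneq$ factor through $\supp$. The key observation is that on $\Anneq$, the map $\cs$ records the full tuple (the content is just the set, and every entry is a singleton, so $\singles(\vect{a}) = \vect{a}$), hence \emph{any} function whatsoever restricted to $\Anneq$ is determined by $\cs$; and on $\Aneq$, $\cs(\vect{a})$ determines $\set(\vect{a}) = \supp(\vect{a})$ as the support of the content multiset, so if $f|_{\Aneq}$ is determined by $\supp$ it is a fortiori determined by $\cs$. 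Splicing these two facts together via a well-definedness check on the shared structure shows $f$ is determined by $\cs$ outright (no similarity needed for this half). For the $\ofo$ side: on $\Anneq$, $\ofo(\vect{a}) = \vect{a}$, so again any function restricted there is determined by $\ofo$; on $\Aneq$, $\ofo(\vect{a})$ determines $\supp(\vect{a})$ as its set of entries, so $f|_{\Aneq}$ being determined by $\supp$ makes it determined by $\ofo$. Hence $f$ is determined by $\ofo$ as well, and in particular similar to such a function.

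For the forward direction, suppose $f$ is similar to a function determined by $\ofo$ and also similar to a function determined by $\cs$. Write $f = g \circ \ontuples{\alpha}$ with $g$ determined by $\ofo$, and $f = h \circ \ontuples{\beta}$ with $h$ determined by $\cs$, where $\alpha, \beta \in \symm{n}$. Since $g$ is determined by $\ofo$, its restriction $g|_{\Aneq}$ depends only on $\set(\vect{a})$ — indeed for $\vect{a} \in \Aneq$ one can permute repeated blocks or shuffle entries without changing $\ofo$ only in limited ways, so I should be careful here: what is actually true is that $g$ determined by $\ofo$ means $g(\vect{a})$ depends only on the order of first occurrence. The real leverage comes from combining the two decompositions: being determined by $\cs$ forces a strong symmetry on tuples with repetitions (any two tuples with the same content and the same singleton-order give the same value), while being determined by $\ofo$ forces values to depend on first-occurrence order. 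I would argue that on $\Aneq$ these two constraints together collapse to dependence on $\set$ alone. Concretely, given $\vect{a}, \vect{b} \in \Aneq$ with $\supp(\vect{a}) = \supp(\vect{b})$, I want to produce a chain of tuples connecting $\vect{a}$ to $\vect{b}$ along which either $\cs$ or $\ofo$ is constant at each step (after applying the relevant $\alpha$ or $\beta$), using the freedom afforded by the repeated entries. The hypothesis $n > \card{A} + 1$, or at least $n \geq \card{A} + 2$ implicit via the fact that we only care about $\Aneq$ being large, should guarantee enough repeated entries to maneuver; this is exactly the regime of Proposition~\ref{prop:2st-ofo}, which I expect to invoke: $f|_{\Aneq}$ being determined by $\supp$ is condition~(iv) there, equivalent to being determined by $\ofo$ and totally symmetric on the repetition locus.

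The main obstacle, and where I would spend the most care, is the forward direction's reduction: translating ``similar to a function determined by $\cs$'' into a usable symmetry statement about $f|_{\Aneq}$, since the conjugating permutation $\beta$ scrambles positions and the $\cs$-invariance is stated in terms of content and singleton-order, not directly in terms of $\supp$. The cleanest route is probably to first show that $h$ determined by $\cs$ implies $h|_{\Aneq}$ is invariant under every permutation fixing the set of ``singleton positions'' of each tuple, then argue that conjugating by $\beta$ and intersecting with the $\ofo$-constraint coming from $\alpha$ leaves only permutations that fix every set-of-entries, i.e., forces $f|_{\Aneq}$ to be totally symmetric in the appropriate sense and determined by $\ofo$ — at which point Proposition~\ref{prop:2st-ofo} delivers ``determined by $\supp$.'' A subtlety to watch: one must check that the similarity hypotheses on all of $A^n$ genuinely restrict to the desired statements on $\Aneq$, and that $\Aneq \ne \emptyset$ and is large enough for the interpolation arguments — which holds since we are in the range where $n$ exceeds $\card{A}$, so $\Anneq$ may even be empty but $\Aneq = A^n$ is the whole space, making the reduction to $\supp$ both meaningful and forced.
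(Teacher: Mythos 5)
Your backward direction is essentially the paper's argument: on $\Anneq$ both $\ofo$ and $\cs$ remember the whole tuple, on $\Aneq$ both determine $\supp$, and the two case-defined factorizations splice together so that $f$ is determined by $\ofo$ and by $\cs$ outright. That half is fine.

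The forward direction has a genuine gap. First, the proposition carries no arity hypothesis, yet you import one: you appeal to ``the regime $n > \card{A}+1$'' and end by asserting that $\Aneq = A^n$ because ``$n$ exceeds $\card{A}$.'' Neither is given; the statement must be proved for all $n$ (the paper treats $n \le 2$, $n = 3$, and $n \ge 4$ separately, with no restriction relating $n$ to $\card{A}$). Relatedly, Proposition~\ref{prop:2st-ofo} cannot be invoked as a black box: it assumes $n > \card{A}+1$, and its equivalences take \emph{total symmetry} (or $2$-set-transitivity) as an input --- but producing the requisite symmetry of $f|_{\Aneq}$ from the two similarity hypotheses is exactly the content of the forward direction, not something you may presuppose. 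Second, and more seriously, your ``chain of tuples along which either $\cs$ or $\ofo$ is constant'' never engages with the only genuinely hard case: tuples whose support has size exactly $n-1$, i.e.\ a single repeated pair. There $\cs$ retains the full left-to-right order of the $n-2$ singletons and $\ofo$ retains the order of first occurrences, so there is almost no slack to ``maneuver with repeated entries,'' and the unknown conjugating permutation $\pi$ scrambles which positions the $\cs$-invariance sees. The paper's proof must choose triples of tuples $\vect{a},\vect{b},\vect{c}$ and positions $p<q<r$, run a six-case analysis on the relative order of $\pi(p),\pi(q),\pi(r)$, and then specialize to $p=1,q=2,r=3$ and $p=1,q=n-1,r=n$ to extract a transposition and an $(n-1)$-cycle, which together generate $\symm{n-1}$ and yield $\supp$-determination on that stratum. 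Nothing in your sketch produces these generators, so the argument does not close. (For tuples with support of size at most $n-2$ your interpolation idea does work, much as in the paper, since two repeated values give enough freedom to realize the generators $(1\;2)$ and $(1\;2\;\cdots\;m)$ on the $\ofo$-image.)
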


\begin{proof}
Assume first that $f|_{\Aneq}$ is determined by $\supp$, i.e., there exists $f^\dagger \colon \mathcal{P}(A) \to B$ such that $f|_{\Aneq} = f^\dagger \circ {\supp}|_{\Aneq}$.
Let $f^* \colon A^\sharp \to B$ be any function satisfying
\[
f^*(\vect{b}) =
\begin{cases}
f^\dagger(\supp(\vect{b})), & \text{if $\card{\vect{b}} < n$,} \\
f(\vect{b}), & \text{if $\card{\vect{b}} = n$,}
\end{cases}
\qquad
\vect{b} \in A^\sharp.
\]
(Note that for every $\vect{b} \in A^\sharp$, it holds that $\card{\vect{b}} = \card{\supp(\vect{b})}$. Note also that the second branch of the defining condition of $f^*$ is irrelevant if $n > \card{A}$.)
Let $\vect{a} \in A^n$.
If $\vect{a} \in \Aneq$, then $\card{\supp(\vect{a})} < n$ and, since $\supp(\ofo(\vect{a})) = \supp(\vect{a})$, we have
\[
f^*(\ofo(\vect{a}))
= f^\dagger(\supp(\ofo(\vect{a})))
= f^\dagger(\supp(\vect{a}))
= f(\vect{a}).
\]
If $\vect{a} \in \Anneq$, then $\ofo(\vect{a}) = \vect{a}$ and we have $f^*(\ofo(\vect{a})) = f(\vect{a})$ also in this case.
Thus, $f$ is determined by $\ofo$.

In order to show that $f$ is also determined by $\cs$,
let $f' \colon \mathcal{Z}^{(n)}(A) \to B$ be any function satisfying
\[
f'(M, \vect{b}) =
\begin{cases}
f^\dagger(\set(M)), & \text{if $\card{\vect{b}} < n$,} \\
f(\vect{b}), & \text{if $\card{\vect{b}} = n$.}
\end{cases}
\]
Let $\vect{a} \in A^n$.
If $\vect{a} \in \Aneq$, then $\card{\singles(\vect{a})} < n$ and we have
\[
f'(\cs(\vect{a}))
= f'(\ms(\vect{a}), \singles(\vect{a}))
= f^\dagger(\set(\ms(\vect{a})))
= f^\dagger(\supp(\vect{a}))
= f(\vect{a}).
\]
If $\vect{a} \in \Anneq$, then $\singles(\vect{a}) = \vect{a}$, so $\card{\singles(\vect{a})} = n$ and we have
\[
f'(\cs(\vect{a}))
= f'(\ms(\vect{a}), \vect{a})
= f(\vect{a}).
\]
We have shown that $f$ is determined by both $\ofo$ and $\cs$.
Since $f \simeq f$, we arrive to the desired conclusion.

Assume then that $f$ is similar to a function determined by $\ofo$ and similar to a function determined by $\cs$.
We may assume, without loss of generality, that $f = f^* \circ {\ofo}|_{A^n} = f' \circ {\cs}|_{A^n} \circ \ontuples{\pi}$ for some $f^* \colon A^\sharp \to B$, $f' \colon \mathcal{Z}^{(n)}(A) \to B$, $\pi \in \symm{n}$.

Since $\Aneq[1] = \emptyset$ and $\Aneq[2] = \{(a,a) \mid a \in A\}$, it is obvious that $f|_{\Aneq}$ is determined by $\supp$ when $1 \leq n \leq 2$.
Consider then the case when $n = 3$.
Since $f = f^* \circ {\ofo}|_{A^n}$, it holds that for all $a, b \in A$ with $a \neq b$,
\begin{gather*}
f(a,a,b) = f(a,b,a) = f(a,b,b) = f^*(ab),
\\
f(b,a,a) = f(b,a,b) = f(b,b,a) = f^*(ba).
\end{gather*}
Since $f = f' \circ {\cs}|_{A^n} \circ \ontuples{\pi}$, it holds that for all $a, b \in A$ with $a \neq b$,
\begin{gather*}
f(a,a,b) = f(a,b,a) = f(b,a,a) = f'(\multiset{a^2,b},b),
\\
f(b,b,a) = f(b,a,b) = f(a,b,b) = f'(\multiset{a,b^2},a).
\end{gather*}
Consequently,
\[
f(a,a,b) = f(a,b,a) = f(a,b,b) = f(b,a,a) = f(b,a,b) = f(b,b,a),
\]
and we conclude that $f|_{\Aneq} = f^\dagger \circ {\supp}|_{\Aneq}$, where $f^\dagger \colon \mathcal{P}(A) \to B$ is any map satisfying $f^\dagger(\{a\}) = f(a,a,a)$ and $f^\dagger(\{a,b\}) = f(a,a,b)$ for all $a, b \in A$ with $a \neq b$.

Finally, assume that $n \geq 4$.
Let
\begin{align*}
U &:= \{\vect{a} \in A^n : \card{\supp(\vect{a})} \leq n - 2\}, \\
V &:= \{\vect{a} \in A^n : \card{\supp(\vect{a})} = n - 1\}.
\end{align*}
We are going to show that both $f|_U$ and $f|_V$ are determined by $\supp$, i.e., there exist $f^\dagger, f^\ddagger \colon \mathcal{P}(A) \to B$ such that $f|_U = f^\dagger \circ {\supp}|_U$ and $f|_V = f^\ddagger \circ {\supp}|_V$.
From this it will follow that $f|_{\Aneq}$ is determined by $\supp$, because
$f|_{\Aneq} = f^\flat \circ {\supp}|_{\Aneq}$, where $f^\flat \colon \mathcal{P}(A) \to B$ is defined as
\[
f^\flat(S) =
\begin{cases}
f^\dagger(S), & \text{if $\card{S} \leq n - 2$,} \\
f^\ddagger(S), & \text{otherwise.}
\end{cases}
\]

We show first that $f|_U$ is determined by $\supp$.
Let $\alpha, \beta \in A$, $\alpha \neq \beta$, and let $\vect{u} \in A^{n-4}$ be a string with no occurrence of $\beta$.
It is easy to see that $\cs(\ontuples{\sigma}(\alpha \alpha \vect{u} \beta \beta)) = \cs(\ontuples{\sigma}(\beta \alpha \vect{u} \alpha \beta)) = \cs(\ontuples{\sigma}(\alpha \beta \vect{u} \alpha \beta))$, for any permutation $\sigma \in \symm{n}$.
In particular, since $f = f' \circ {\cs}|_{A^n} \circ \ontuples{\pi}$, this implies that $f(\alpha \alpha \vect{u} \beta \beta) = f(\beta \alpha \vect{u} \alpha \beta) = f(\alpha \beta \vect{u} \alpha \beta)$.
Note also that there exists a string $\vect{v} \in A^\sharp$ with no occurrence of $\alpha$ nor $\beta$ such that
$\ofo(\alpha \alpha \vect{u} \beta \beta) = \alpha \vect{v} \beta$,
$\ofo(\beta \alpha \vect{u} \alpha \beta) = \beta \alpha \vect{v}$ and
$\ofo(\alpha \beta \vect{u} \alpha \beta) = \alpha \beta \vect{v}$.
Since $f = f^* \circ {\ofo}|_{A^n}$, we must have $f^*(\alpha \vect{v} \beta) = f^*(\beta \alpha \vect{v}) = f^*(\alpha \beta \vect{v})$.
Now $\alpha \vect{v} \beta, \beta \alpha \vect{v}, \alpha \beta \vect{v} \in \Anneq[m]$ for some $m \leq n - 2$.
Since the choice of $\alpha$, $\beta$ and $\vect{u}$ was arbitrary, we conclude that $f^*|_{\Anneq[m]}$ is invariant under the permutations $\sigma = (1 \; 2 \; \dots \; m)$ and $\tau = (1 \; 2)$.
Since $\{\sigma, \tau\}$ generates the full symmetric group $\symm{m}$, this implies that $f^*|_{\Anneq[m]}$ is totally symmetric whenever $m \leq n - 2$, i.e., $f^*(\vect{b}) = f^*(\vect{b} \rho)$ for any $\vect{b} \in \Anneq[m]$ and any permutation $\rho \in \symm{m}$.
In other words, $f^*|_{\Anneq[m]}(\vect{b})$ depends only on $\supp(\vect{b})$.
Consequently, there exists a mapping $f^\dagger \colon \mathcal{P}(A) \to B$ that satisfies
$f^\dagger(\supp(\vect{b})) = f^*(\vect{b})$ for every $\vect{b} \in A^\sharp$ with $\card{\vect{b}} \leq n - 2$.
Since $\supp(\vect{a}) = \supp(\ofo(\vect{a}))$, we have
\[
f^\dagger(\supp(\vect{a}))
= f^\dagger(\supp(\ofo(\vect{a})))
= f^*(\ofo(\vect{a}))
= f(\vect{a})
\]
for all $\vect{a} \in A^n$ with $\card{\supp(\vect{a})} \leq n - 2$,
i.e., $f|_U$ is determined by $\supp$.

It remains to show that $f|_V$ is determined by $\supp$.
Let $p, q, r, x, y, z \in \nset{n}$ be elements satisfying $p < q < r$, $x < y < z$ and $\{\pi(p), \pi(q), \pi(r)\} = \{x, y, z\}$.
Let $\vect{a}, \vect{b}, \vect{c} \in A^n$ be tuples such that $L := \supp(\vect{a}) = \supp(\vect{b}) = \supp(\vect{c})$, $\card{L} = n - 1$, $a_i = b_i$ whenever $i \neq z$, $b_j = c_j$ whenever $j \neq y$ and
$\alpha := a_x = a_z = b_x = c_x = c_y$ and
$\beta := a_y = b_y = b_z = c_z$.
It clearly holds that $\ofo(\vect{a}) = \ofo(\vect{b}) = \ofo(\vect{c})$.
Since $f = f^* \circ {\ofo}|_{A^n}$, we have $f(\vect{a}) = f(\vect{b}) = f(\vect{c})$.
Consequently, since $f = f' \circ {\cs}|_{A^n} \circ \ontuples{\pi}$, the mapping $f'$ must take the same value at points $\cs(\ontuples{\pi}(\vect{a}))$, $\cs(\ontuples{\pi}(\vect{b}))$ and $\cs(\ontuples{\pi}(\vect{c}))$.

Since $\vect{a}$ and $\vect{b}$ differ only at the $z$\hyp{}th component, $\ontuples{\pi}(\vect{a})$ and $\ontuples{\pi}(\vect{b})$ differ only at the $\pi^{-1}(z)$\hyp{}th component, and $\pi^{-1}(z) \in \{p, q, r\}$.
Similarly, since $\vect{b}$ and $\vect{c}$ differ only at the $y$\hyp{}th component, $\ontuples{\pi}(\vect{b})$ and $\ontuples{\pi}(\vect{c})$ differ only at the $\pi^{-1}(y)$\hyp{}th component, and $\pi^{-1}(y) \in \{p, q, r\}$.
Therefore, there exist strings $\vect{u}_1, \vect{u}_2, \vect{u}_3, \vect{u}_4 \in A^*$ such that $\card{\vect{u}_1} = p - 1$, $\card{\vect{u}_2} = q - p - 1$, $\card{\vect{u}_3} = r - q - 1$, $\card{\vect{u}_4} = n - r$
and
\begin{align*}
\ontuples{\pi}(\vect{a}) &= \vect{u}_1 a_{\pi(p)} \vect{u}_2 a_{\pi(q)} \vect{u}_3 a_{\pi(r)} \vect{u}_4, \\
\ontuples{\pi}(\vect{b}) &= \vect{u}_1 b_{\pi(p)} \vect{u}_2 b_{\pi(q)} \vect{u}_3 b_{\pi(r)} \vect{u}_4, \\
\ontuples{\pi}(\vect{c}) &= \vect{u}_1 c_{\pi(p)} \vect{u}_2 c_{\pi(q)} \vect{u}_3 c_{\pi(r)} \vect{u}_4.
\end{align*}
Moreover,
\begin{multline*}
\{(a_{\pi(p)}, b_{\pi(p)}, c_{\pi(p)}), (a_{\pi(q)}, b_{\pi(q)}, c_{\pi(q)}), (a_{\pi(r)}, b_{\pi(r)}, c_{\pi(r)})\}
\\
= \{(\alpha, \alpha, \alpha), (\beta, \beta, \alpha), (\alpha, \alpha, \beta)\}
\end{multline*}
and $\alpha \beta \vect{u}_1 \vect{u}_2 \vect{u}_3 \vect{u}_4 \in A^\sharp$
and $\supp(\alpha \beta \vect{u}_1 \vect{u}_2 \vect{u}_3 \vect{u}_4) = L$.
Let $M := \ms(\vect{u}_1 \vect{u}_2 \vect{u}_3 \vect{u}_4)$.
Then $\ms(\vect{a}) = \ms(\ontuples{\pi}(\vect{a})) = \ms(\vect{c}) = \ms(\ontuples{\pi}(\vect{c})) = \multiset{\alpha, \alpha, \beta} \dotcup M$ and $\ms(\vect{b}) = \ms(\ontuples{\pi}(\vect{b})) = \multiset{\alpha, \beta, \beta} \dotcup M$.
Concerning $\singles(\ontuples{\pi}(\vect{a}))$, $\singles(\ontuples{\pi}(\vect{b}))$, and $\singles(\ontuples{\pi}(\vect{c}))$, we have different possibilities, depending on the relative order of $\pi(p)$, $\pi(q)$, and $\pi(r)$, as shown in the following table.

\begin{center}
\begin{tabular}{cccc}
\toprule
case & $\singles(\ontuples{\pi}(\vect{a}))$ & $\singles(\ontuples{\pi}(\vect{b}))$ & $\singles(\ontuples{\pi}(\vect{c}))$ \\
\midrule
$\pi(p) < \pi(q) < \pi(r)$ & $\vect{u}_1 \vect{u}_2 \beta \vect{u}_3 \vect{u}_4$ & $\vect{u}_1 \alpha \vect{u}_2 \vect{u}_3 \vect{u}_4$ & $\vect{u}_1 \vect{u}_2 \vect{u}_3 \beta \vect{u}_4$ \\
$\pi(p) < \pi(r) < \pi(q)$ & $\vect{u}_1 \vect{u}_2 \vect{u}_3 \beta \vect{u}_4$ & $\vect{u}_1 \alpha \vect{u}_2 \vect{u}_3 \vect{u}_4$ & $\vect{u}_1 \vect{u}_2 \beta \vect{u}_3 \vect{u}_4$ \\
$\pi(q) < \pi(p) < \pi(r)$ & $\vect{u}_1 \beta \vect{u}_2 \vect{u}_3 \vect{u}_4$ & $\vect{u}_1 \vect{u}_2 \alpha \vect{u}_3 \vect{u}_4$ & $\vect{u}_1 \vect{u}_2 \vect{u}_3 \beta \vect{u}_4$ \\
$\pi(q) < \pi(r) < \pi(p)$ & $\vect{u}_1 \beta \vect{u}_2 \vect{u}_3 \vect{u}_4$ & $\vect{u}_1 \vect{u}_2 \vect{u}_3 \alpha \vect{u}_4$ & $\vect{u}_1 \vect{u}_2 \beta \vect{u}_3 \vect{u}_4$ \\
$\pi(r) < \pi(p) < \pi(q)$ & $\vect{u}_1 \vect{u}_2 \vect{u}_3 \beta \vect{u}_4$ & $\vect{u}_1 \vect{u}_2 \alpha \vect{u}_3 \vect{u}_4$ & $\vect{u}_1 \beta \vect{u}_2 \vect{u}_3 \vect{u}_4$ \\
$\pi(r) < \pi(q) < \pi(p)$ & $\vect{u}_1 \vect{u}_2 \beta \vect{u}_3 \vect{u}_4$ & $\vect{u}_1 \vect{u}_2 \vect{u}_3 \alpha \vect{u}_4$ & $\vect{u}_1 \beta \vect{u}_2 \vect{u}_3 \vect{u}_4$ \\
\bottomrule
\end{tabular}
\end{center}

Let us now consider some specific values of $p$, $q$, $r$.
If we choose $p = 1$, $q = 2$, $r = 3$, then it holds that $\vect{u}_1 = \vect{u}_2 = \vect{u}_3 = \varepsilon$, so, regardless of the relative order of $\pi(1)$, $\pi(2)$, and $\pi(3)$, the equality $f'(\cs(\ontuples{\pi}(\vect{a}))) = f'(\cs(\ontuples{\pi}(\vect{b})))$ implies
\begin{equation}
f'(\multiset{\alpha, \alpha, \beta} \dotcup M, \beta \vect{u}_4)
= f'(\multiset{\alpha, \beta, \beta} \dotcup M, \alpha \vect{u}_4).
\label{eq:A1}
\end{equation}

Now choose $p = 1$, $q = n - 1$, $r = n$.
Then $\vect{u}_1 = \vect{u}_3 = \vect{u}_4 = \varepsilon$,
and, depending on the relative order of $\pi(1)$, $\pi(2)$, and $\pi(3)$, the equalities $f'(\cs(\ontuples{\pi}(\vect{a}))) = f'(\cs(\ontuples{\pi}(\vect{b}))) = f'(\cs(\ontuples{\pi}(\vect{c})))$ imply
either
\begin{equation}
f'(\multiset{\alpha, \alpha, \beta} \dotcup M, \vect{u}_2 \beta) = f'(\multiset{\alpha, \beta, \beta} \dotcup M, \alpha \vect{u}_2)
\label{eq:A2}
\end{equation}
or
\begin{equation}
f'(\multiset{\alpha, \alpha, \beta} \dotcup M, \beta \vect{u}_2) = f'(\multiset{\alpha, \beta, \beta} \dotcup M, \vect{u}_2 \alpha).
\label{eq:A3}
\end{equation}

Since the choice of the tuples $\vect{a}$, $\vect{b}$, $\vect{c}$ was arbitrary, it follows from identities \eqref{eq:A1}, \eqref{eq:A2}, \eqref{eq:A3} that the equalities
\begin{align}
f'(\multiset{\alpha, \alpha, \beta} \dotcup \ms(\vect{v}), \beta \vect{v}) &= f'(\multiset{\alpha, \beta, \beta} \dotcup \ms(\vect{v}), \alpha \vect{v}),
\label{eq:B1}
\\
f'(\multiset{\alpha, \alpha, \beta} \dotcup \ms(\vect{v}), \beta \vect{v}) &= f'(\multiset{\alpha, \beta, \beta} \dotcup \ms(\vect{v}), \vect{v} \alpha)
\label{eq:B2}
\end{align}
hold for any $\alpha, \beta \in A$ and any $\vect{v} \in A^{n-3}$ satisfying $\alpha \beta \vect{v} \in \Anneq[n-1]$.

The sets $\Anneq[n-1]$ and $\cs(T) := \{\cs(\vect{a}) \mid \vect{a} \in T\}$ are in one\hyp{}to\hyp{}one correspondence via the map $h \colon \Anneq[n-1] \to \cs(T)$ given by
\begin{align*}
(a_1, a_2, \dots, a_{n-1}) \mapsto & \cs(a_1, a_1, a_2, \dots, a_{n-1}) \\ &{} = (\multiset{a_1, a_1, a_2, \dots, a_{n-1}}, a_2 a_3 \dots a_{n-1}).
\end{align*}
Equalities \eqref{eq:B1} and \eqref{eq:B2} thus express the fact that $f'(h(\ontuples{\tau}(\vect{a}))) = f'(h(\vect{a}))$ and \linebreak $f'(h(\ontuples{\sigma}(\vect{a}))) = f'(h(\vect{a}))$ for all $\vect{a} \in T$, where $\sigma = (1 \; 2 \; \dots \; n-1)$, $\tau = (1 \; 2)$.
Since $\{\sigma, \tau\}$ generates the full symmetric group $\symm{n-1}$, this implies that $f' \circ h \colon \Anneq[n-1] \to B$ is totally symmetric.
It is easy to see that if $\vect{w}, \vect{w}' \in T$ satisfy $\supp(\vect{w}) = \supp(\vect{w}')$, then $h^{-1}(\cs(\vect{w})) = \ontuples{\pi}(h^{-1}(\cs(\vect{w})))$ for some $\pi \in \symm{n-1}$.
Consequently,
\begin{align*}
f(\vect{w})
&
= f'(\cs(\vect{w}))
= f'(h(h^{-1}(\cs(\vect{w}))))
= f'(h(\ontuples{\pi}(h^{-1}(\cs(\vect{w}'))))
\\ &
= f'(h(h^{-1}(\cs(\vect{w}'))))
= f'(\cs(\vect{w}'))
= f(\vect{w}'),
\end{align*}
where the fourth equality holds by the total symmetry of $f' \circ h$.
This means that $f|_T(\vect{w})$ only depends on $\supp(\vect{w})$, i.e., $f|_T$ is determined by $\supp$.
\end{proof}

\begin{proposition}
\label{prop:2st-and-cs}
Let $f \colon A^n \to B$.
Then $f$ determined by $\cs$ and $f|_{\Aneq}$ is $2$\hyp{}set\hyp{}transitive if and only if $f|_{\Aneq}$ is totally symmetric.
\end{proposition}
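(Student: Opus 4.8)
The plan is to prove the two implications separately, with the forward direction being the substantive one. For the easy direction, suppose $f|_{\Aneq}$ is totally symmetric. Then $f|_{\Aneq}$ is certainly $2$-set-transitive, so it remains to check that $f$ is determined by $\cs$. A totally symmetric partial function on $\Aneq$ is determined by $\ms$ restricted to $\Aneq$ (Remark~\ref{rem:ms-totsymm}, adapted to partial functions), and on $\Anneq$ the map $\singles$ is injective (it is the identity there), so $\cs$ is injective on $\Anneq$. Combining these, one builds $f^* \colon \mathcal{Z}^{(n)}(A) \to B$ by using the $\ms$-value on the image of $\Aneq$ and the full tuple on the image of $\Anneq$; this is well-defined precisely because the two pieces of $\mathcal{Z}^{(n)}(A)$ (those $(M,\vect{b})$ with $\card{\vect{b}} < n$ and those with $\card{\vect{b}} = n$) are disjoint. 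This mirrors the construction already used in the proof of Proposition~\ref{prop:ofo-and-cs}.

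For the forward direction, assume $f$ is determined by $\cs$, say $f = f^* \circ {\cs}|_{A^n}$, and that $f|_{\Aneq}$ is $2$-set-transitive, i.e.\ $\Inv(f|_{\Aneq})$ acts transitively on $\couples$. The goal is to upgrade $2$-set-transitivity to full symmetry of $f|_{\Aneq}$. The key observation is that being determined by $\cs$ already forces a strong symmetry on the non-injective part of the domain: for any $\vect{a} \in \Aneq$ with at least one repeated entry, permuting the positions that are ``bound together'' by repetitions does not change $\cs(\vect{a})$, hence does not change $f(\vect{a})$. More precisely, I would first show that $f|_{\Aneq}$ is invariant under every transposition $(i\ j)$ such that $i$ and $j$ can be placed so that swapping them is absorbed by $\cs$ — concretely, if $\vect{a}\in\Aneq$ has $a_i = a_j$ then trivially $f(\vect{a}) = f(\vect{a}(i\ j))$, and more usefully one gets invariance of $f|_{\Aneq}$ under a transposition $(i\ j)$ whenever there is enough ``room'' (some repeated value can be parked at positions $i,j$). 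Since $n \geq 3$ is needed for $\Aneq \neq \Anneq$-phenomena to bite and the statement is vacuous or trivial for small $n$, I would handle $n \le 2$ (and possibly $n=3$) separately and assume $n$ large enough that $\Anneq[n-1]$-type arguments apply.

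The main engine, I expect, is the following: let $G := \Inv(f|_{\Aneq})$. We know $G$ is $2$-set-transitive, and we want $G \supseteq$ (enough transpositions to generate) $\symm{n}$ — actually we want $f|_{\Aneq}$ invariant under all of $\symm{n}$, which is slightly weaker than $\Inv(f|_{\Aneq}) = \symm{n}$ only in pathological cases but here will come out the same. Using Lemma~\ref{lem:minors-cs} and the structure of $\cs$, I would argue that the $2$-set-transitivity of $G$ together with the $\cs$-invariance of $f$ on repeated-entry tuples lets one conjugate the ``free'' swap of two equal entries to a swap of two arbitrary positions: given $I, J \in \couples$ pick $\sigma \in G$ with $\sigma(I) = J$, and chase a tuple that has a repeated value sitting at the coordinates of $I$. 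This should yield that $f|_{\Aneq}$ is invariant under a transposition and under an $(n-1)$- or $n$-cycle, and $\{(1\ 2), (1\ 2\ \dots\ n)\}$ generates $\symm{n}$ — exactly the generation trick used twice already in the proof of Proposition~\ref{prop:ofo-and-cs}. Invariance of $f|_{\Aneq}$ under $\symm{n}$ is total symmetry, completing the proof.

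The step I expect to be the main obstacle is making the conjugation argument genuinely produce a transposition of \emph{arbitrary} adjacent coordinates from the combination of (a) the canned ``free swaps'' coming from $\cs$ on repeated-entry tuples and (b) a single $2$-set-transitive $\sigma$; one has to be careful that the tuple witnessing the free swap can actually be chosen inside $\Aneq$ with its repeated block positioned correctly after applying $\sigma$, which constrains $n$ relative to $\card{A}$ and may require splitting into cases $\card{A} < n$ vs.\ $\card{A} \ge n$ as in the definition of $Z(k,n)$. A clean alternative, which I would pursue if the direct conjugation gets messy, is to show first that $f|_{U}$ and $f|_{V}$ (the sublevels of $\supp$-cardinality $\le n-2$ and $=n-1$) are each totally symmetric — the $U$ part is almost immediate from $\cs$-determinacy plus Remark~\ref{rem:ms-totsymm}-style reasoning, and the $V$ part uses $2$-set-transitivity exactly as in Proposition~\ref{prop:ofo-and-cs} — and then glue, noting that on $\Anneq$ one invokes $2$-set-transitivity directly or observes $\Anneq$ is empty when $n > \card{A}$.
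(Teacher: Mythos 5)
Your backward direction is fine and matches the paper. The forward direction, however, rests on a device that does not exist. You propose to harvest ``invariance of $f|_{\Aneq}$ under a transposition $(i\ j)$ whenever \dots some repeated value can be parked at positions $i,j$'' and then conjugate by a $2$\hyp{}set\hyp{}transitive $\sigma$. This conflates a pointwise identity with invariance: $\cs$\hyp{}determinacy gives $f(\vect{a}) = f(\vect{a}\tau)$ only for those \emph{particular} tuples $\vect{a}$ in which the swap does not disturb the order of the singletons, whereas membership of $\tau$ in $\Inv(f|_{\Aneq})$ requires the equality for \emph{every} $\vect{a} \in \Aneq$ --- including tuples where positions $i$ and $j$ carry two distinct singletons, for which $\cs(\vect{a}\tau) \neq \cs(\vect{a})$ and the values may genuinely differ. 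For a generic $\cs$\hyp{}determined $f$ the group $\Inv(f|_{\Aneq})$ contains no nontrivial transposition at all, so there is nothing to conjugate. What must actually be proved is that $f^*(M,\cdot)$ is a symmetric function of the singleton string whenever $M$ contains an element $b$ of multiplicity $\geq 2$. The paper does this by a specific construction you do not reach: writing the singletons as $a_1\dots a_\ell$, it places $a_i, a_{i+1}, b$ at positions $i, i+1, i+2$, takes $\sigma \in \Inv(f|_{\Aneq})$ with $\{\sigma(1),\sigma(2)\} = \{i, i+2\}$ (note: $i+2$, not $i+1$), transports the tuple so that the free swap $\cs(a_i b \vect{v}) = \cs(b a_i \vect{v})$ applies at positions $1,2$, and pulls back to obtain $f^*(M, a_1 \dots a_i a_{i+1} \dots a_\ell) = f^*(M, a_1 \dots a_{i+1} a_i \dots a_\ell)$; adjacent transpositions then generate $\symm{\ell}$. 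Your sketch names the right ingredients but never produces an adjacent transposition of the \emph{singleton string}, which is the whole point, and you yourself flag this step as the unresolved obstacle.

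The fallback you offer is also broken. You claim that $f|_U$ (tuples with $\card{\supp(\vect{a})} \leq n-2$) is ``almost immediate[ly]'' totally symmetric from $\cs$\hyp{}determinacy alone. It is not: a tuple such as $(1,2,3,3,3)$ lies in $U$ yet has two singletons whose relative order is recorded by $\cs$, so a $\cs$\hyp{}determined function can separate $(1,2,3,3,3)$ from $(2,1,3,3,3)$. In the proof of Proposition~\ref{prop:ofo-and-cs} the $U$ step leans essentially on the simultaneous $\ofo$\hyp{}determinacy, which is not available here; the $2$\hyp{}set\hyp{}transitivity hypothesis has to do the work on all of $\Aneq$, not just on $V$.
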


\begin{proof}
Assume that $f = f^* \circ {\cs}|_{A^n}$ and $f|_{\Aneq}$ is $2$\hyp{}set\hyp{}transitive.
Let $M = (A, \chi_M)$ be a multiset over $A$ with $\card{M} = n$ and assume that $M$ has an element $b$ of multiplicity at least $2$.
Let $T = \{a_1, \dots, a_\ell\}$ be the set of elements whose multiplicity in $M$ is equal to $1$, and
fix a string $\vect{u} \in A^{n - \ell - 2}$ such that $\ms(a_1 \dots a_\ell b b \vect{u}) = M$.

Let $i \in \nset{\ell - 1}$.
Since $f|_{\Aneq}$ is $2$\hyp{}set\hyp{}transitive, there exists a permutation $\sigma \in \Inv f|_{\Aneq}$ such that $\{\sigma(1), \sigma(2)\} = \{i, i + 2\}$.
Consider first the case that $\sigma(1) = i$ and $\sigma(2) = i + 2$.
Let $\vect{v} \in A^{n-2}$ be the unique string satisfying
\[
(a_1 \dots a_{i-1} a_i a_{i+1} b a_{i+2} \dots a_\ell b \vect{u}) \sigma = a_i b \vect{v}.
\]
Then we have
\begin{multline*}
f^*(M, a_1 \dots a_\ell)
= f(a_1 \dots a_{i-1} a_i a_{i+1} b a_{i+2} \dots a_\ell b \vect{u})
= f(a_i b \vect{v})
= f(b a_i \vect{v}) \\
= f(a_1 \dots a_{i-1} b a_{i+1} a_i a_{i+2} \dots a_\ell b \vect{u})
= f^*(M, a_1 \dots a_{i-1} a_{i+1} a_i a_{i+2} \dots a_\ell),
\end{multline*}
In the above, the first and the last equalities hold because $f = f^* \circ {\cs}|_{A^n}$, the second and the fourth equalities hold because $\sigma \in \Inv f|_{\Aneq}$, and the third equality holds because $f$ is determined by $\cs$ and clearly $\cs(a_i b \vect{v}) = \cs(b a_i \vect{v})$ since $b$ occurs at least twice in $a_i b \vect{v}$.
A similar argument shows that $f^*(M, a_1 \dots a_\ell) = f^*(M, a_1 \dots a_{i-1} a_{i+1} a_i a_{i+2} \dots a_\ell)$ also in the case when $\sigma(1) = i + 2$ and $\sigma(2) = i$.

Since the full symmetric group $\symm{\ell}$ is generated by the adjacent transpositions $(i \;\; i + 1)$, $1 \leq i \leq \ell - 1$, it follows that $f^*(M, a_1 \dots a_\ell) = f^*(M, a_{\pi(1)} \dots a_{\pi(\ell)})$ for any permutation $\pi \in \symm{\ell}$.
In other words, $f^*$ does not depend on its second argument when it is restricted to the set of pairs $(M,\vect{u}) \in \mathcal{Z}^{(n)}(A)$ where $M$ is a multiset containing an element of multiplicity at least $2$.
We conclude that $f|_{\Aneq}$ is totally symmetric.

For the converse implication, assume that $f|_{\Aneq}$ is totally symmetric.
Then obviously $f|_{\Aneq}$ is $2$\hyp{}set\hyp{}transitive.
Furthermore, it is easy to see that $f = f^* \circ {\cs}|_{A^n}$, where $f^* \colon \mathcal{Z}(A) \to B$ is defined as
$f^*(M,\vect{a}) = f(\vect{a}\vect{b})$ where $\vect{b} \in A^*$ is any string such that $\ms(\vect{a} \vect{b}) = M$.
The mapping $f^*$ is well defined, because the total symmetry of $f|_{\Aneq}$ implies that $f(\vect{a} \vect{b})$ does not depend on the particular choice of $\vect{b}$.
\end{proof}

Let us introduce some notation for classes of functions having a unique identification minor:
\begin{itemize}
\item $\clUNI := \{f \in \mathcal{F}_{AB} : \text{$f$ has a unique identification minor}\}$,
\item $\clOFO := \{f \in \mathcal{F}_{AB} : \text{$f \simeq g$ for some $g$ that is determined by $\ofo$}\}$,
\item $\clCS := \{f \in \mathcal{F}_{AB} : \text{$f \simeq g$ for some $g$ that is determined by $\cs$}\}$,
\item $\clTWOST := \{f \in \mathcal{F}_{AB} : \text{$f$ is $2$-set-transitive}\}$,
\item $\clSYMM := \{f \in \mathcal{F}_{AB} : \text{$f$ is totally symmetric}\}$,
\item $\clSUPP := \{f \in \mathcal{F}_{AB} : \text{$f$ is determined by $\supp$}\}$.
\end{itemize}

The relationships between these classes can be summarized as follows; see also Figure~\ref{fig:inclusions}.

\begin{corollary}
Assume that $\card{A} = k$ and $\card{B} \geq 2$.
\begin{enumerate}[\upshape (i)]
\item\label{CS=SYMM-2} If $k = 2$, then $\clCS^{(n)} = \clSYMM^{(n)}$ for every $n \geq 3$.

\item\label{CS-nsub-OFO-2ST} $\clCS^{(n)} \nsubseteq \clOFO^{(n)} \cup \clTWOST^{(n)}$ for every $n \geq k + 1$.

\item\label{2ST-OFO-nsub-CS} $\clOFO^{(n)} \nsubseteq \clCS^{(n)}$ and $\clTWOST^{(n)} \nsubseteq \clCS^{(n)}$ for every $n \geq k+1$.

\item\label{OFO-CS} $\clOFO^{(n)} \cap \clCS^{(n)} = \clSUPP^{(n)}$ for every $n \geq k + 1$.

\item\label{2ST-CS} $\clTWOST^{(n)} \cap \clCS^{(n)} = \clSYMM^{(n)}$ for every $n \geq k + 1$.

\item\label{2ST-OFO} $\clTWOST^{(n)} \cap \clOFO^{(n)} = \clSUPP^{(n)}$ for every $n \geq k + 2$
\end{enumerate}
\end{corollary}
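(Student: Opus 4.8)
The plan is to derive all six items from the lemmas and propositions already proved, together with a few elementary closure facts, so that most of the work is bookkeeping. I will use freely that: (1) being totally symmetric, being $2$-set-transitive, and being determined by $\supp$ are each preserved under similarity (conjugating $\Inv f$ by the coordinate permutation witnessing $f\simeq g$ preserves the first two, and a $\supp$-determined function is already totally symmetric), and that a totally symmetric — in particular a $\supp$-determined — function similar to another is in fact equal to it; (2) $\clSYMM\subseteq\clCS$, since a totally symmetric function is determined by $\ms$ (Remark~\ref{rem:ms-totsymm}) and $\ms$ factors through $\cs$, and likewise $\clSUPP\subseteq\clOFO\cap\clCS$, since $\supp$ factors through both $\ofo$ and $\cs$; and (3) for $n\geq k+1$ every $n$-tuple over $A$ repeats an entry, so $\Aneq=A^n$, and a condition imposed on $f|_{\Aneq}$ is then a condition on $f$ itself.

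Item~(i): for $k=2$ and $n\geq 3$, Lemma~\ref{lem:cs-new-2} says every function determined by $\cs$ is totally symmetric, so by~(1) so is every member of $\clCS^{(n)}$, giving $\clCS^{(n)}\subseteq\clSYMM^{(n)}$; the reverse inclusion is~(2). Item~(ii) is the content of Lemma~\ref{lem:cs-new}, which for $n\geq k+1$ exhibits a function determined by $\cs$ lying in neither $\clOFO^{(n)}$ nor $\clTWOST^{(n)}$. Item~(iv): by Proposition~\ref{prop:ofo-and-cs}, $f\in\clOFO^{(n)}\cap\clCS^{(n)}$ iff $f|_{\Aneq}$ is determined by $\supp$, which by~(3) means $f\in\clSUPP^{(n)}$. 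Item~(v): $\clSYMM^{(n)}\subseteq\clTWOST^{(n)}\cap\clCS^{(n)}$ is~(2); conversely, given $f\in\clTWOST^{(n)}\cap\clCS^{(n)}$, choose $g$ determined by $\cs$ with $f\simeq g$, so by~(1) $g$ is $2$-set-transitive, hence so is $g|_{\Aneq}=g$, and Proposition~\ref{prop:2st-and-cs} forces $g$, hence $f$, totally symmetric. Item~(vi) follows from Proposition~\ref{prop:2st-ofo}, whose hypothesis $n>k+1$ is $n\geq k+2$: if $f$ is $2$-set-transitive and $f\simeq g$ with $g$ determined by $\ofo$, then by~(1) $g$ is $2$-set-transitive and determined by $\ofo$, so the proposition gives $g$ determined by $\supp$, hence totally symmetric, hence $f=g\in\clSUPP^{(n)}$; the reverse inclusion is~(2).

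Only item~(iii) needs new witnesses. For $\clOFO^{(n)}\nsubseteq\clCS^{(n)}$, fix $a_0\in A$ and let $f(\vect a)=1$ if $a_1=a_0$ and $f(\vect a)=0$ otherwise; as $a_1$ is the first symbol of $\ofo(\vect a)$, the function $f$ is determined by $\ofo$, whereas $f(a_0,b,\dots,b)\neq f(b,a_0,b,\dots,b)$ for $b\neq a_0$ shows $f|_{\Aneq}$ is not determined by $\supp$, so $f\notin\clCS^{(n)}$ by Proposition~\ref{prop:ofo-and-cs}. For $\clTWOST^{(n)}\nsubseteq\clCS^{(n)}$, note first — exactly as in item~(v) — that a $2$-set-transitive function of arity $n\geq k+1$ lying in $\clCS^{(n)}$ must be totally symmetric; it therefore suffices to produce a $2$-set-transitive function $A^n\to B$ that is \emph{not} totally symmetric, i.e.\ one whose invariance group is a proper $2$-homogeneous subgroup of $\symm{n}$.

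Constructing such a function is the genuinely delicate point, and the main obstacle. The first candidate, a function with invariance group $\alt{n}$, fails: since $n>k$ every tuple repeats an entry and is fixed by some transposition, so the $\alt{n}$- and $\symm{n}$-orbits on $A^n$ coincide, whence $\alt{n}$-invariance already forces $\symm{n}$-invariance. One must instead take a proper $2$-homogeneous group $G<\symm{n}$ for which this collapse does not occur, realize $G$ as the invariance group of a function $A^n\to B$ over the prescribed $k$-element alphabet, and check that the invariance group of the result is exactly $G$ rather than accidentally all of $\symm{n}$. This realizability step is the one place the argument leaves the shelter of the propositions already at hand; once it is settled, item~(iii) follows and the picture is complete.
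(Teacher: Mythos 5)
Items (i), (ii), (iv), (v), (vi) and the first half of (iii) are handled correctly and by essentially the same route as the paper: the paper's own proof consists of citing Lemma~\ref{lem:cs-new-2}, Lemma~\ref{lem:cs-new} and Propositions~\ref{prop:ofo-and-cs}, \ref{prop:2st-and-cs}, \ref{prop:2st-ofo} together with the observation that $\Aneq = A^n$ once $n > \card{A}$, and your closure facts (preservation of total symmetry, $2$\hyp{}set\hyp{}transitivity and $\supp$\hyp{}determination under similarity, and the inclusions $\clSYMM \subseteq \clCS$ and $\clSUPP \subseteq \clOFO \cap \clCS$) merely make explicit what the paper leaves tacit. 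Your witness for $\clOFO^{(n)} \nsubseteq \clCS^{(n)}$ (the indicator of $a_1 = a_0$) is correct, and is more than the paper supplies. (A side remark: item~(ii) requires $k \geq 3$, as in Lemma~\ref{lem:cs-new}; for $k=2$ it contradicts item~(i).)

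The gap you flag in the second half of (iii) is genuine, and in fact it cannot be closed in the generality claimed. As you observe, one needs a $2$\hyp{}set\hyp{}transitive function $A^n \to B$ that is not totally symmetric, and $\alt{n}$ is useless once $n > k$ because every tuple is then fixed by a transposition, so the $\alt{n}$\hyp{}orbits on $A^n$ coincide with the $\symm{n}$\hyp{}orbits. But for many degrees $n$ the groups $\alt{n}$ and $\symm{n}$ are the \emph{only} $2$\hyp{}set\hyp{}transitive subgroups of $\symm{n}$; already for $n=4$ this is so (a subgroup transitive on the six $2$\hyp{}subsets must be transitive and of order divisible by $6$, hence of order divisible by $12$, leaving only $\alt{4}$ and $\symm{4}$). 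Hence for $k=3$ and $n=4$ --- squarely within the range $n \geq k+1$ --- every $2$\hyp{}set\hyp{}transitive function $A^4 \to B$ is totally symmetric, so $\clTWOST^{(4)} = \clSYMM^{(4)} \subseteq \clCS^{(4)}$ and the asserted non\hyp{}inclusion $\clTWOST^{(n)} \nsubseteq \clCS^{(n)}$ fails. The paper's proof of (iii) only cites Propositions~\ref{prop:ofo-and-cs} and~\ref{prop:2st-and-cs} and exhibits no witness either, so what you have run into is a defect of the corollary itself rather than of your write\hyp{}up: the second claim of (iii) holds exactly for those $n$ admitting a $2$\hyp{}set\hyp{}transitive subgroup of $\symm{n}$ whose orbits on $A^n$ are strictly finer than those of $\symm{n}$, and its range must be restricted accordingly. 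Your hesitation at the ``realizability step'' was the right instinct; the honest conclusion is that the step fails in general, not that it remains to be carried out.
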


\begin{proof}
Statement \eqref{CS=SYMM-2} follows from Lemma~\ref{lem:cs-new-2}, \eqref{CS-nsub-OFO-2ST} from Lemma~\ref{lem:cs-new}, \eqref{2ST-OFO-nsub-CS} from Propositions~\ref{prop:ofo-and-cs} and~\ref{prop:2st-and-cs}, and \eqref{2ST-OFO} from Proposition~\ref{prop:2st-ofo}.
If $n > \card{A}$, then $\Aneq = A^n$, and Propositions~\ref{prop:ofo-and-cs} and \ref{prop:2st-and-cs} reduce to statements \eqref{OFO-CS} and \eqref{2ST-CS}, respectively.
\end{proof}

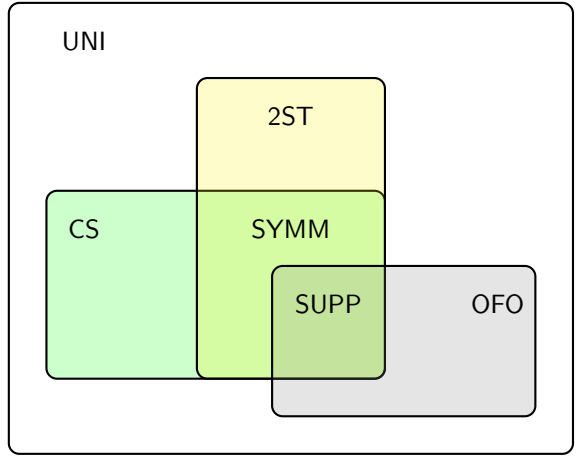
\begin{figure}
\begin{tikzpicture}[thick,draw opacity=1,fill opacity=0.2,text opacity=1,scale=1]
\filldraw[fill=green,rounded corners] (-4.5,0.5) rectangle (0,3);
\filldraw[fill=yellow,rounded corners] (-2.5,0.5) rectangle (0,4.5);
\filldraw[fill=gray,rounded corners] (-1.5,0) rectangle (2,2);
\draw[rounded corners] (-5,-0.5) rectangle (2.5,5.5);
\node[opacity=1] at (-4,5) {$\clUNI$};
\node[opacity=1] at (-1.25,4) {$\clTWOST$};
\node[opacity=1] at (-1.25,2.5) {$\clSYMM$};
\node[opacity=1] at (-4,2.5) {$\clCS$};
\node[opacity=1] at (-0.75,1.5) {$\clSUPP$};
\node[opacity=1] at (1.5,1.5) {$\clOFO$};
\end{tikzpicture}
\caption{Inclusions between classes of $n$\hyp{}ary functions with a unique identification minor, for $n \geq \card{A} + 2$ and $\card{A} \geq 3$.}
\label{fig:inclusions}
\end{figure}


\section{How do content and singletons and order of first occurrence arise from unique identification minors?}
\label{sec:how}

The classes $\clOFO$ and $\clCS$ may seem to have arisen just by accident as sporadic examples of functions with a unique identification minor.
We would like to understand better why these examples exist.
For this reason, we go back to the definition of a function with a unique identification minor and try to see how the classes $\clOFO$ and $\clCS$ arise therefrom by imposing some simple additional conditions.

Recall that a function $f \colon A^n \to B$ has a unique identification minor if and only if there exist a function $h \colon A^{n-1} \to B$ and a family $(\rho_I)_{I \in \couples}$ of permutations $\rho_I \in \symm{n-1}$ such that $f_I = h \circ \ontuples{\rho_I}$ for every $I \in \couples$.
We are going to see that $f$ belongs to one of the classes $\clOFO$ or $\clCS$ if and only if the family $(\rho_I)_{I \in \couples}$ of permutations satisfies certain conditions.

Let $\vect{a}, \vect{b} \in A^n$.
We write $\vect{a} \sim \vect{b}$ if there exist $\vect{u} \in A^{n-1}$ and $I, J \in \couples$ such that $\vect{a} = \vect{u} \delta_I$ and $\vect{b} = \vect{u} \delta_J$.
The relation $\sim$ is clearly reflexive and symmetric.
Denote by $\sim^*$ the transitive closure of $\sim$.

\begin{lemma}
\label{lem:ab-ofo}
For all $\vect{a}, \vect{b} \in A^n$, it holds that $\vect{a} \sim^* \vect{b}$ if and only if $\ofo(\vect{a}) = \ofo(\vect{b})$.
\end{lemma}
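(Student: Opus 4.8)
The plan is to prove both implications of $\vect{a} \sim^* \vect{b} \iff \ofo(\vect{a}) = \ofo(\vect{b})$ separately, the forward direction being easy and the backward direction requiring a small normal-form argument.

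For the forward implication, observe first that if $\vect{a} \sim \vect{b}$, say $\vect{a} = \vect{u}\delta_I$ and $\vect{b} = \vect{u}\delta_J$ for some $\vect{u} \in A^{n-1}$, then by the fundamental property of $\ofo$ recorded in Section~\ref{subsec:detby} (namely $\ofo(\vect{u}) = \ofo(\vect{u}\delta_K)$ for every $K \in \couples$, applied here with $n-1$ in place of $n-1$) we get $\ofo(\vect{a}) = \ofo(\vect{u}) = \ofo(\vect{b})$. Since equality of $\ofo$-values is transitive, it follows that $\vect{a} \sim^* \vect{b}$ implies $\ofo(\vect{a}) = \ofo(\vect{b})$.

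For the backward implication I would fix a ``canonical'' tuple associated with a given $\ofo$-value and show every tuple with that value is $\sim^*$-related to it. Concretely, suppose $\ofo(\vect{a}) = \ofo(\vect{b}) = \vect{w}$ where $\vect{w} = w_1 \dots w_m \in A^\sharp$. The canonical representative I would pick is $\vect{c} := w_1 \dots w_{m-1} w_m w_m \dots w_m$, i.e.\ the first $m-1$ distinct symbols followed by enough repetitions of the last symbol $w_m$ to reach length $n$ (this is well defined since $m \leq n$, and when $m = n$ it is just $\vect{w}$ itself). It suffices to show $\vect{a} \sim^* \vect{c}$ for every $\vect{a}$ with $\ofo(\vect{a}) = \vect{w}$; the result then follows by symmetry and transitivity of $\sim^*$. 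For this, I would argue by induction on the number of positions in which $\vect{a}$ disagrees with $\vect{c}$ (or, more cleanly, induct on $n - m$ together with a left-to-right sweep): if $\vect{a} \neq \vect{c}$, locate a suitable position, delete it via some $\delta_J$ to pass to a shorter tuple $\vect{u}$, and then re-expand via a different $\delta_I$ to a tuple that agrees with $\vect{c}$ in one more position while keeping $\ofo$ equal to $\vect{w}$; each such step is a single $\sim$-move. The point that makes this work is that any repeated symbol in $\vect{a}$ can be ``absorbed'' into an adjacent occurrence, and any symbol not already forced by the order-of-first-occurrence constraint can be moved or duplicated freely.

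The main obstacle is the bookkeeping in the backward direction: making the reduction-to-canonical-form step genuinely a single application of $\sim$ (a delete-then-reinsert pair $\vect{u}\delta_J \sim \vect{u}\delta_I$) rather than a vague ``local move'', and verifying that the intermediate tuples retain the same $\ofo$-value throughout. A clean way to handle this is to first reduce $\vect{a}$ to $\vect{w}$ followed by copies of whatever the last symbol of $\vect{a}$ is, then note that repeated trailing symbols can be rewritten, and finally compare with $\vect{c}$; but one must be slightly careful when $m = 1$ or when $\vect{a}$ has no repeated symbols ($\vect{a} \in \Anneq$), in which case $\vect{a} = \vect{w} = \vect{c}$ already and there is nothing to prove. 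I expect the whole backward argument to come down to a short inductive lemma of the form: for any $\vect{a}$ with $\ofo(\vect{a}) = \vect{w}$ and $\card{\vect{w}} < n$, there is $\vect{a}'$ with $\vect{a} \sim \vect{a}'$ and $\vect{a}'$ strictly closer to $\vect{c}$ in a suitable well-founded measure.
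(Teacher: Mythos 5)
Your overall strategy coincides with the paper's: the forward direction via the identity $\ofo(\vect{u} \delta_I) = \ofo(\vect{u})$, and the backward direction by reducing every tuple with a given $\ofo$-value to a canonical representative. The forward half is complete and correct. The backward half, however, is only a plan: the entire content of the lemma sits in the step you defer as ``bookkeeping,'' namely exhibiting, for $\vect{a} \neq \vect{c}$ with $\ofo(\vect{a}) = \vect{w}$, an actual pair $\vect{u} \delta_J = \vect{a}$, $\vect{u} \delta_I = \vect{a}'$ with $\vect{a}'$ strictly closer to $\vect{c}$. You assert that any repeated symbol can be ``absorbed'' and that symbols can be ``moved or duplicated freely,'' but you never verify that the required delete-and-reinsert pair exists, and that is exactly where the work is. The paper closes this gap with explicit identities: writing $\card{\vect{x}} = p$ and $\card{\vect{y}} = q$, one has $(a \vect{x} b \vect{y} \vect{z}) \delta_{\{p+2,\, p+q+3\}} = a \vect{x} b \vect{y} b \vect{z}$ and $(a \vect{x} b \vect{y} \vect{z}) \delta_{\{1,2\}} = a a \vect{x} b \vect{y} \vect{z}$ (with an analogous pair for a repeated first letter), whence $a \vect{x} b \vect{y} b \vect{z} \sim a a \vect{x} b \vect{y} \vect{z}$ in a single $\sim$-step. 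Iterating migrates every redundant occurrence to the front as a copy of the first letter, giving $\vect{w} \sim^* a \dots a\, \ofo(\vect{w})$; your canonical form with trailing copies of $w_m$ would serve equally well, but the analogous one-step identities still have to be written down for the proof to exist.

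Two smaller points. First, your concern about ``verifying that the intermediate tuples retain the same $\ofo$-value throughout'' is moot: every tuple reached by a $\sim$-move has the form $\vect{u} \delta_I$, and $\ofo(\vect{u} \delta_I) = \ofo(\vect{u})$ for every $I$, so $\ofo$ is automatically constant on $\sim^*$-classes --- that is precisely your forward direction. Second, the observation that makes the reduction painless is that $\vect{a} = \vect{u} \delta_{\{i,j\}}$ (with $\vect{u}$ obtained from $\vect{a}$ by deleting the $j$-th entry) holds precisely when $a_i = a_j$ and $i < j$; hence any non-first occurrence may be chosen as the entry to delete, after which $\delta_I$ may duplicate any entry of $\vect{u}$ at any later position. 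Making this explicit would turn your sketch into a proof.
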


\begin{proof}
Let us first prove necessity.
If $\vect{a} \sim \vect{b}$, then $\vect{a} = \vect{u} \delta_I$ and $\vect{b} = \vect{u} \delta_J$ for some $\vect{u} \in A^{n-1}$ and $I, J \in \couples$.
Then $\ofo(\vect{a}) = \ofo(\vect{u} \delta_I) = \ofo(\vect{u}) = \ofo(\vect{u} \delta_J) = \ofo(\vect{b})$.
From this, it immediately follows that if $\vect{a} \sim^* \vect{b}$, then $\ofo(\vect{a}) = \ofo(\vect{b})$.

For sufficiency, observe first that for any $a, b \in A$, $\vect{x}, \vect{y}, \vect{z} \in A^*$ with $\card{\vect{x}} = p$, $\card{\vect{y}} = q$, it holds that
\begin{align*}
& (a \vect{x} b \vect{y} \vect{z}) \delta_{\{p + 2, p + q + 3\}} = a \vect{x} b \vect{y} b \vect{z} &
& (a \vect{x} \vect{y}) \delta_{\{1, p + 2\}} = a \vect{x} a \vect{y}, \\
& (a \vect{x} b \vect{y} \vect{z}) \delta_{\{1, 2\}} = a a \vect{x} b \vect{y} \vect{z} &
& (a \vect{x} \vect{y}) \delta_{\{1, 2\}} = a a \vect{x} \vect{y}.
\end{align*}
Therefore $a \vect{x} b \vect{y} b \vect{z} \sim a a \vect{x} b \vect{y} \vect{z}$ and $a \vect{x} a \vect{y} \sim a a \vect{x} \vect{y}$.
Applying these relations repeatedly, we see that for any $\vect{w} \in A^n$,
\[
\vect{w} \sim^* \underbrace{a \dots a}_{\ell} \ofo(\vect{w}),
\]
where $\ell = n - \card{\ofo(\vect{w})}$ and $a$ is the first letter of $\ofo(\vect{w})$.
Consequently, if $\vect{a}, \vect{b} \in A^n$ satisfy $\ofo(\vect{a}) = \ofo(\vect{b})$ and the first letter of $\ofo(\vect{a})$ is $a$, then
\[
\vect{a} \sim^* a \dots a \ofo(\vect{a}) = a \dots a \ofo(\vect{b}) \sim^* \vect{b}.
\qedhere
\]
\end{proof}

In the following proposition, the family $(\rho_I)_{I \in \couples}$ can be thought of as comprising only identity permutations.

\begin{proposition}
Let $f \colon A^n \to B$.
Then $f$ is determined by the order of first occurrence if and only if there exists a function $h \colon A^{n-1} \to B$ such that $f_I = h$ for all $I \in \couples$.
\end{proposition}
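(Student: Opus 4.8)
The plan is to prove both directions using Lemma~\ref{lem:ab-ofo}, which characterises the transitive closure $\sim^*$ of the relation $\sim$ in terms of the equality $\ofo(\vect{a}) = \ofo(\vect{b})$.

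For the ``if'' direction, suppose there is a function $h \colon A^{n-1} \to B$ with $f_I = h$ for all $I \in \couples$. First I would observe that if $\vect{a} \sim \vect{b}$, say $\vect{a} = \vect{u} \delta_I$ and $\vect{b} = \vect{u} \delta_J$ for some $\vect{u} \in A^{n-1}$ and $I, J \in \couples$, then $f(\vect{a}) = f(\vect{u} \delta_I) = f_I(\vect{u}) = h(\vect{u}) = f_J(\vect{u}) = f(\vect{u} \delta_J) = f(\vect{b})$. Hence $f$ is constant on each $\sim$-class, and therefore, $\sim^*$ being the transitive closure of $\sim$, also constant on each $\sim^*$-class. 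By Lemma~\ref{lem:ab-ofo}, two tuples lie in the same $\sim^*$-class exactly when they have the same image under $\ofo$; thus $f(\vect{a})$ depends only on $\ofo(\vect{a})$. Since ${\ofo}|_{A^n}$ maps onto a subset of $A^\sharp$, one can pick any $f^* \colon A^\sharp \to B$ agreeing with $f$ on representatives, giving $f = f^* \circ {\ofo}|_{A^n}$, so $f$ is determined by the order of first occurrence.

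For the ``only if'' direction, suppose $f = f^* \circ {\ofo}|_{A^n}$ for some $f^* \colon A^\sharp \to B$. The key fact, recalled in Subsection~\ref{subsec:detby}, is that $\ofo(\vect{a}) = \ofo(\vect{a} \delta_I)$ for every $\vect{a} \in A^{n-1}$ and every $I \in \couples$. Hence for any $I \in \couples$ and any $\vect{u} \in A^{n-1}$ we have $f_I(\vect{u}) = f(\vect{u} \delta_I) = f^*(\ofo(\vect{u} \delta_I)) = f^*(\ofo(\vect{u}))$, a value independent of $I$. Setting $h \colon A^{n-1} \to B$, $h := f^* \circ {\ofo}|_{A^{n-1}}$, we get $f_I = h$ for every $I \in \couples$, as required.

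I expect neither direction to present a genuine obstacle: the ``only if'' direction is immediate from the fundamental property of $\ofo$, and the ``if'' direction is a short argument once Lemma~\ref{lem:ab-ofo} is invoked. The only point requiring a little care is the passage from ``$f$ is constant on $\sim$-classes'' to ``$f$ is constant on $\sim^*$-classes'', which is automatic because constancy on the blocks of a reflexive, symmetric relation is preserved under taking the transitive closure; and the observation that a function factoring through $\ofo$ on $A^n$ can indeed be written as $f^* \circ {\ofo}|_{A^n}$ for a suitable $f^*$, which is just the definition of being determined by $\ofo$.
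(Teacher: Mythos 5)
Your proof is correct and follows essentially the same route as the paper: the sufficiency direction uses Lemma~\ref{lem:ab-ofo} and a chain of $\sim$-steps exactly as in the paper, and your necessity argument is just the direct computation behind the result the paper cites from \cite{Lehtonen-unique}, resting on the fundamental property $\ofo(\vect{u}\delta_I)=\ofo(\vect{u})$. No gaps.
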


\begin{proof}
Necessity is proved in~\cite[Proposition~4]{Lehtonen-unique}: if $f = f^* \circ {\ofo}|_{A^n}$, then $f_I = f^* \circ {\ofo}|_{A^{n-1}}$ for all $I \in \couples$.

For sufficiency, assume that $f_I = h$ for all $I \in \couples$. We need to show that $f(\vect{a}) = f(\vect{b})$ whenever $\ofo(\vect{a}) = \ofo(\vect{b})$.
Let $\vect{a}, \vect{b} \in A^n$ and assume that $\ofo(\vect{a}) = \ofo(\vect{b})$.
Then $\vect{a} \sim^* \vect{b}$ by Lemma~\ref{lem:ab-ofo}, so there exist tuples $\vect{c}^0, \dots, \vect{c}^r$ such that $\vect{a} = \vect{c}^0 \sim \vect{c}^1 \sim \dots \sim \vect{c}^{r-1} \sim \vect{c}^r = \vect{b}$. Then for each $i \in \nset{r}$ there exist $\vect{u}^i \in A^{n-1}$ and $I_i, J_i \in \couples$ such that $\vect{c}^{i-1} = \vect{u}^i \delta_{I_i}$ and $\vect{c}^i = \vect{u}^i \delta_{J_i}$. Since
\[
f(\vect{c}^{i-1})
= f(\vect{u}^i \delta_{I_i})
= f_{I_i}(\vect{u}^i)
= h(\vect{u}^i)
= f_{J_i}(\vect{u}^i)
= f(\vect{u}^i \delta_{J_i})
= f(\vect{c}^i),
\]
it follows that $f(\vect{a}) = f(\vect{c}^0) = f(\vect{c}^1) = \dots = f(\vect{c}^r) = f(\vect{b})$.
\end{proof}

Let $\vect{a}, \vect{b} \in A^n$.
We write $\vect{a} \sim_2 \vect{b}$ if there exist $\vect{x}, \vect{y}, \vect{z}, \vect{x}', \vect{y}', \vect{z}', \in A^*$, $\alpha \in A$ such that $\vect{a} = \vect{x} \alpha \vect{y} \alpha \vect{z}$, $\vect{b} = \vect{x}' \alpha \vect{y}' \alpha \vect{z}'$, and $\vect{x} \vect{y} \vect{z} = \vect{x}' \vect{y}' \vect{z}'$.
The relation $\sim_2$ is clearly reflexive and symmetric.
Denote by $\sim_2^*$ the transitive closure of $\sim_2$.

\begin{lemma}
\label{lem:ab-cs}
For all $\vect{a}, \vect{b} \in A^n$, it holds that $\vect{a} \sim_2^* \vect{b}$ if and only if $\cs(\vect{a}) = \cs(\vect{b})$.
\end{lemma}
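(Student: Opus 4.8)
The plan is to follow the template of Lemma~\ref{lem:ab-ofo}: verify \emph{necessity} by inspecting a single $\sim_2$-step, and prove \emph{sufficiency} by producing, for each value of $\cs$, a canonical string to which every string with that $\cs$-value reduces under $\sim_2^*$.

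\emph{Necessity.} Suppose $\vect{a}\sim_2\vect{b}$, say $\vect{a}=\vect{x}\alpha\vect{y}\alpha\vect{z}$ and $\vect{b}=\vect{x}'\alpha\vect{y}'\alpha\vect{z}'$ with $\vect{x}\vect{y}\vect{z}=\vect{x}'\vect{y}'\vect{z}'=:\vect{c}$; equivalently, $\vect{a}$ and $\vect{b}$ are both obtained from $\vect{c}$ by inserting two occurrences of $\alpha$. Let $\vect{d}$ be the string obtained from $\vect{c}$ (equivalently, from $\vect{a}$, or from $\vect{b}$) by deleting \emph{all} occurrences of $\alpha$, and let $k$ be the number of occurrences of $\alpha$ in $\vect{c}$. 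Then $\vect{a}$ and $\vect{b}$ are both obtained from the $\alpha$-free string $\vect{d}$ by inserting $k+2$ copies of $\alpha$ (and $k+2\geq 2$), whence $\ms(\vect{a})=\ms(\vect{d})\dotcup\multiset{\alpha^{k+2}}=\ms(\vect{b})$; moreover, since $\alpha$ occurs at least twice in $\vect{a}$ it is a singleton of neither $\vect{a}$ nor $\vect{b}$, while for $c\neq\alpha$ the number of occurrences of $c$, and its order relative to the other symbols distinct from $\alpha$, agree in $\vect{a}$, $\vect{b}$, and $\vect{d}$, so $\singles(\vect{a})=\singles(\vect{d})=\singles(\vect{b})$. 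Hence $\cs(\vect{a})=\cs(\vect{b})$. As the relation $\{(\vect{a},\vect{b}) : \cs(\vect{a})=\cs(\vect{b})\}$ is transitive it contains $\sim_2^*$, so $\vect{a}\sim_2^*\vect{b}$ implies $\cs(\vect{a})=\cs(\vect{b})$.

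\emph{Sufficiency.} Fix a linear order on the finite support of the content under consideration (a convenience only). For $\vect{w}\in A^n$, write $\ms(\vect{w})=\multiset{\beta_1^{c_1},\dots,\beta_m^{c_m}}\dotcup\ms(\singles(\vect{w}))$ with $\beta_1<\dots<\beta_m$ the elements of multiplicity $c_i\geq 2$, and set $\kappa(\vect{w}):=\beta_1^{c_1}\beta_2^{c_2}\cdots\beta_m^{c_m}\,\singles(\vect{w})$. Since $\kappa(\vect{w})$ depends only on $\cs(\vect{w})$, it suffices to show $\vect{w}\sim_2^*\kappa(\vect{w})$ for every $\vect{w}$: then $\cs(\vect{a})=\cs(\vect{b})$ yields $\vect{a}\sim_2^*\kappa(\vect{a})=\kappa(\vect{b})\sim_2^*\vect{b}$ by symmetry and transitivity (the case $m=0$ being vacuous, as then $\vect{w}=\kappa(\vect{w})$). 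The reduction rests on two elementary observations, both immediate from the definition of $\sim_2$: the \emph{basic move} $\vect{p}\alpha\vect{q}\alpha\vect{r}\sim_2\alpha\alpha\,\vect{p}\vect{q}\vect{r}$ (take $\vect{x}'=\vect{y}'=\varepsilon$), which pulls any two chosen occurrences of a letter to the front; and \emph{concatenation compatibility}, $\vect{t}\sim_2\vect{t}'\Rightarrow\vect{u}\vect{t}\vect{v}\sim_2\vect{u}\vect{t}'\vect{v}$ (hence likewise for $\sim_2^*$), which lets one rearrange a suffix while freezing a prefix. Using these I would prove, by induction on $\card{\vect{w}}$, that if $\alpha$ occurs exactly $c\geq 2$ times in $\vect{w}$ then $\vect{w}\sim_2^*\alpha^c\vect{w}^{\circ}$, where $\vect{w}^{\circ}$ is $\vect{w}$ with all occurrences of $\alpha$ removed: the case $c=2$ is one basic move; for $c\geq 3$, write $\vect{w}=\vect{w}_0\alpha\vect{w}_1\alpha\vect{w}''$ with $\vect{w}_0,\vect{w}_1$ free of $\alpha$, apply a basic move to reach $\alpha\alpha\,\vect{w}_0\vect{w}_1\vect{w}''$, invoke the induction hypothesis on the shorter string $\vect{w}''$ (which has $c-2$ occurrences of $\alpha$; the residual cases $c-2\in\{0,1\}$ are settled by one more basic move) together with concatenation compatibility to collect the occurrences of $\alpha$ inside $\vect{w}''$ into a leading block $\alpha^{c-2}$, and finally slide that block to the front past the $\alpha$-free $\vect{w}_0\vect{w}_1$; this last ``block slide'' ($\vect{s}\alpha^c\vect{r}\sim_2^*\alpha^c\vect{s}\vect{r}$ for $\alpha$-free $\vect{s}$) is itself a tiny induction on $c$ using the same two moves, with base cases $c\in\{2,3\}$. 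Applying this statement successively with $\alpha=\beta_1,\dots,\beta_m$ — at stage $i$ the prefix $\beta_1^{c_1}\cdots\beta_{i-1}^{c_{i-1}}$ is frozen by concatenation compatibility and one operates on the suffix, which contains no $\beta_1,\dots,\beta_{i-1}$ — turns $\vect{w}$ into $\beta_1^{c_1}\cdots\beta_m^{c_m}\vect{v}$, where $\vect{v}$ lists each singleton of $\vect{w}$ exactly once and nothing else. Finally, every move performed permutes occurrences of a single $\beta_i$ only and hence (as in the necessity argument) preserves the relative order of all other symbols, in particular of the singletons; so $\vect{v}=\singles(\vect{w})$ and $\vect{w}\sim_2^*\kappa(\vect{w})$, as required.

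\emph{Main obstacle.} The substance is concentrated in the inductive proof that $\vect{w}\sim_2^*\alpha^c\vect{w}^{\circ}$, and within it in the block slide: gathering all occurrences of a fixed letter into one leading block is transparent when the multiplicity is even, but requires an extra manoeuvre and a small parity base case when it is odd. The remaining ingredients — the one-step check for necessity, concatenation compatibility, and the bookkeeping that the singletons keep their relative order — are routine.
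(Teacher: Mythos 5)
Your proof is correct and follows essentially the same strategy as the paper's: necessity by checking that a single $\sim_2$-step preserves $\ms$ and $\singles$, and sufficiency by rewriting every tuple into a canonical normal form that depends only on $\cs(\vect{w})$, so that two tuples with equal $\cs$-value meet at the same representative. The only differences are organisational: the paper's normal form is $\singles(\vect{w})$ followed by the sorted repeated letters, grown in a single greedy pass with a two-part termination measure, whereas yours is the mirrored form built by collecting the repeated letters one at a time through nested inductions.
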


\begin{proof}
If $\vect{a} \sim_2 \vect{b}$, then clearly $\ms(\vect{a}) = \ms(\vect{b})$ and $\singles(\vect{a}) = \singles(\vect{b})$, i.e., $\cs(\vect{a}) = \cs(\vect{b})$.
It follows that $\cs(\vect{a}) = \cs(\vect{b})$ whenever $\vect{a} \sim_2^* \vect{b}$.

For the converse implication, let us introduce some notation. Assume that $A$ is equipped with a fixed linear order $\leq$.
For a tuple $\vect{u} \in A^n$, denote by $\vect{u}^\mathrm{pre}$ the longest prefix of $\vect{u}$ that contains only singletons of $\vect{u}$, and denote by $\vect{u}^\mathrm{suf}$ the longest suffix of $\vect{u}$ that is a non\hyp{}decreasing (with respect to $\leq$) sequence of letters that occur at least twice in $\vect{u}$.
It is possible that $\vect{u}^\mathrm{pre}$ and $\vect{u}^\mathrm{suf}$ may be empty.

Let $\vect{u} \in A^n$.
Unless $\vect{u} = \vect{u}^\mathrm{pre} \vect{u}^\mathrm{suf}$, we have $\vect{u} = \vect{u}^\mathrm{pre} \alpha \vect{w} \vect{u}^\mathrm{suf}$, where $\alpha$ is a letter occurring at least twice in $\vect{u}$.
If $\vect{w} = \vect{w}^1 \alpha \vect{w}^2$ for some strings $\vect{w}^1$ and $\vect{w}^2$, then let $\vect{u}'$, $\vect{u}''$ be strings such that $\vect{u}^\mathrm{suf} = \vect{u}' \vect{u}''$ and $\vect{u}' \alpha \alpha \vect{u}''$ is a non\hyp{}decreasing sequence, and let $\vect{v} = \vect{u}^\mathrm{pre} \vect{w}^1 \vect{w}^2 \vect{u}' \alpha \alpha \vect{u}''$.
Otherwise $\vect{u}^\mathrm{suf} = \vect{u}' \alpha \vect{u}''$ for some strings $\vect{u}'$ and $\vect{u}''$; in this case, let $\vect{v} = \vect{u}^\mathrm{pre} \vect{w} \vect{u}' \alpha \alpha \vect{u}''$.
In either case, we have $\vect{u} \sim_2 \vect{v}$ and, moreover, $\card{\vect{u}^\mathrm{pre}} \leq \card{\vect{v}^\mathrm{pre}}$ and $\card{\vect{u}^\mathrm{suf}} < \card{\vect{v}^\mathrm{suf}}$.

Thus, starting from an arbitrary tuple $\vect{u} \in A^n$, we can construct a finite sequence $\vect{c}^0, \vect{c}^1, \dots, \vect{c}^r$ such that $\vect{u} = \vect{c}^0 \sim_2 \vect{c}^1 \sim_2 \dots \sim_2 \vect{c}^{r-1} \sim_2 \vect{c}^r = \vect{u}^\dagger \vect{u}^\ddag$, where $\vect{u}^\dagger = \singles(\vect{u})$ and $\vect{u}^\ddag$ comprises those letters that occur in $\vect{u}$ at least twice, sorted in non\hyp{}decreasing order. Thus $\vect{u} \sim_2^* \vect{u}^\dagger \vect{u}^\ddag$.

Now, if $\vect{a}, \vect{b} \in A^n$ are tuples such that $\cs(\vect{a}) = \cs(\vect{b})$, then clearly $\vect{a}^\dagger = \vect{b}^\dagger$ and $\vect{a}^\ddag = \vect{b}^\ddag$.
Then $\vect{a} \sim_2^* \vect{a}^\dagger \vect{a}^\ddag$ and $\vect{b} \sim_2^* \vect{b}^\dagger \vect{b}^\ddag$.
Since $\vect{a}^\dagger \vect{a}^\ddag = \vect{b}^\dagger \vect{b}^\ddag$ and $\sim_2^*$ is a symmetric and transitive relation, we have $\vect{a} \sim_2^* \vect{b}$.
\end{proof}

\begin{proposition}
Let $f \colon A^n \to B$.
Then $f$ is determined by $\cs$ if and only if there exists a function $h \colon A^{n-1} \to B$ such that for every $I = \{i, j\} \in \couples$ \textup{(}$i < j$\textup{)} we have 
$f_I = h \circ \ontuples{\zeta_i}$, where $\zeta_i = (i \; i+1 \; \cdots \; n-1)$,
\end{proposition}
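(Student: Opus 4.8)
The plan is to prove both implications separately, relying on Lemma~\ref{lem:minors-cs}, the computation inside the proof of Lemma~\ref{lem:cs-unique}, and Lemma~\ref{lem:ab-cs}.

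For \emph{necessity}, suppose $f = f^* \circ {\cs}|_{A^n}$ for some $f^* \colon \mathcal{Z}(A) \to B$. I would take $h := f_K$ with $K := \{n-1, n\}$ and quote the work already done for Lemma~\ref{lem:cs-unique}: by Lemma~\ref{lem:minors-cs} we have $f_I = f^* \circ \cs^{n-1}_i$ and $f_K = f^* \circ \cs^{n-1}_{n-1}$, and the identity $\cs^{n-1}_i(\vect{a}) = \cs^{n-1}_{n-1}(\vect{a}\zeta_i)$ established there (the cycle written $\xi_i$ in that proof is exactly our $\zeta_i$) gives $f_I = f^* \circ \cs^{n-1}_{n-1} \circ \ontuples{\zeta_i} = f_K \circ \ontuples{\zeta_i}$ for every $I = \{i,j\} \in \couples$, $i < j$. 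For $I = K$ the cycle $\zeta_{n-1}$ is trivial, so the formula correctly reduces to $f_K = h$.

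For \emph{sufficiency}, assume such an $h$ exists. By Lemma~\ref{lem:ab-cs} it suffices to show that $f$ is constant on each $\sim_2^*$-class, and since $\sim_2^*$ is the transitive closure of $\sim_2$, it is enough to show $f(\vect{a}) = f(\vect{b})$ whenever $\vect{a} \sim_2 \vect{b}$. So write $\vect{a} = \vect{x}\alpha\vect{y}\alpha\vect{z}$, $\vect{b} = \vect{x}'\alpha\vect{y}'\alpha\vect{z}'$ with $\vect{x}\vect{y}\vect{z} = \vect{x}'\vect{y}'\vect{z}'$. The key step is to recognise $\vect{a}$ as an identified tuple: with $p := \card{\vect{x}}$, $q := \card{\vect{y}}$, $\vect{w} := \vect{x}\alpha\vect{y}\vect{z} \in A^{n-1}$, $i := p+1$, $I := \{p+1,\,p+q+2\} \in \couples$, one checks that $\vect{a} = \vect{w}\delta_I$ (the $i$-th entry of $\vect{w}$ is $\alpha$, and $\delta_I$ reinstates a copy of it at position $p+q+2$). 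Hence
\[
f(\vect{a}) = f(\vect{w}\delta_I) = f_I(\vect{w}) = h(\vect{w}\zeta_i),
\]
and unwinding $\zeta_i = (i\;i+1\;\cdots\;n-1)$ shows that $\vect{w}\zeta_i$ is $\vect{w}$ with its $i$-th entry moved to the end, i.e.\ $\vect{w}\zeta_i = \vect{x}\vect{y}\vect{z}\,\alpha$. Thus $f(\vect{a}) = h(\vect{x}\vect{y}\vect{z}\,\alpha)$, which depends only on $\alpha$ and the string $\vect{x}\vect{y}\vect{z}$; the same reasoning applied to $\vect{b}$ gives $f(\vect{b}) = h(\vect{x}'\vect{y}'\vect{z}'\,\alpha) = h(\vect{x}\vect{y}\vect{z}\,\alpha) = f(\vect{a})$. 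Therefore $f(\vect{a})$ depends only on $\cs(\vect{a})$, so $f = f^* \circ {\cs}|_{A^n}$ where $f^*$ is defined on the range $\mathcal{Z}^{(n)}(A)$ of $\cs|_{A^n}$ by $f^*(\cs(\vect{a})) := f(\vect{a})$ and extended arbitrarily to $\mathcal{Z}(A)$.

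I expect no real obstacle. The only delicate point is the index bookkeeping in the sufficiency direction: correctly matching the two marked occurrences of $\alpha$ in $\vect{a}$ to a pair $I \in \couples$ and verifying that $\vect{w}\zeta_i$ is precisely ``$\vect{a}$ with both marked copies of $\alpha$ deleted, followed by a single $\alpha$''. Once the clean formula $f(\vect{a}) = h(\vect{x}\vect{y}\vect{z}\,\alpha)$ is in hand the conclusion is immediate from Lemma~\ref{lem:ab-cs}, and the necessity direction is essentially a citation of the proof of Lemma~\ref{lem:cs-unique}.
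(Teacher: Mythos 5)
Your proposal is correct and follows essentially the same route as the paper: necessity by citing the computation $f_I = f_K \circ \ontuples{\zeta_i}$ from the proof of Lemma~\ref{lem:cs-unique}, and sufficiency by reducing to a single $\sim_2$-step, writing $\vect{a} = \vect{w}\delta_I$ with $\vect{w} = \vect{x}\alpha\vect{y}\vect{z}$ and $I = \{\card{\vect{x}}+1, \card{\vect{x}\vect{y}}+2\}$ to get $f(\vect{a}) = h(\vect{x}\vect{y}\vect{z}\alpha)$, then invoking Lemma~\ref{lem:ab-cs}. The index bookkeeping you flag as the delicate point checks out, so nothing further is needed.
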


\begin{proof}
Necessity is established in the proof of Lemma~\ref{lem:cs-unique}.
Let us prove sufficiency.

Assume first that $\vect{a} \sim_2 \vect{b}$.
Then $\vect{a} = \vect{x} \alpha \vect{y} \alpha \vect{z}$ and $\vect{b} = \vect{x}' \alpha \vect{y}' \alpha \vect{z}'$ with $\vect{x} \vect{y} \vect{z} = \vect{x}' \vect{y}' \vect{z}'$.
Let $I = \{i, j\}$, $J = \{p, q\}$, where $i = \card{\vect{x}} + 1$, $j = \card{\vect{x} \vect{y}} + 2$, $p = \card{\vect{x}'}$, $q = \card{\vect{x}' \vect{y}'}$.
We have
\begin{multline*}
f(\vect{a})
= f(\vect{x} \alpha \vect{y} \alpha \vect{z})
= f_I(\vect{x} \alpha \vect{y} \vect{z})
= h((\vect{x} \alpha \vect{y} \vect{z}) \zeta_i)
= h(\vect{x} \vect{y} \vect{z} \alpha) \\
= h(\vect{x}' \vect{y}' \vect{z}' \alpha)
= h((\vect{x}' \alpha \vect{y}' \vect{z}') \zeta_p)
= f_J(\vect{x}' \alpha \vect{y}' \vect{z}')
= f(\vect{x}' \alpha \vect{y}' \alpha \vect{z}')
= f(\vect{b}).
\end{multline*}

Now, let $\vect{a}, \vect{b} \in A^n$, and assume that $\cs(\vect{a}) = \cs(\vect{b})$. Then $\vect{a} \sim_2^* \vect{b}$ by Lemma~\ref{lem:ab-cs}, so there exist tuples $\vect{c}^0, \dots, \vect{c}^r$ such that $\vect{a} = \vect{c}^0 \sim_2 \vect{c}^1 \sim_2 \cdots \sim_2 \vect{c}^{r-1} \sim_2 \vect{c}^r = \vect{b}$. As we have shown above, for each $i \in \nset{r}$ we have $f(\vect{c}^{i-1}) = f(\vect{c}^i)$, so it follows that $f(\vect{a}) = f(\vect{b})$.
\end{proof}


\section{Invariance groups of functions determined by content and singletons}
\label{sec:permutations}

We now focus on the symmetries of functions determined by $\cs$.
It is most natural to pose the following question.

\begin{question}
\label{q:inv-groups}
For $n \in \IN$, which subgroups of the symmetric group $\symm{n}$ are invariance groups of $n$\hyp{}ary functions determined by $\cs$?
\end{question}

In order to approach this problem, we will make use of the notion of permutation pattern, which we are now going to briefly recall (for further information, see, e.g., Bóna~\cite{Bona}, Kitaev~\cite{Kitaev}).
Any permutation $\pi \in \symm{n}$ corresponds to a string $\pi_1 \pi_2 \dots \pi_n$, where $\pi_i = \pi(i)$ for all $i \in \nset{n}$.
For any string $\vect{u}$ of distinct integers, the \emph{reduction} or \emph{reduced form} of $\vect{u}$, denoted by $\red(\vect{u})$, is the permutation obtained from $\vect{u}$ by replacing its $i$\hyp{}th smallest entry with $i$, for $1 \leq i \leq \card{\vect{u}}$.
A permutation $\tau \in \symm{\ell}$ is a \emph{pattern} (or an \emph{$\ell$\hyp{}pattern}) of a permutation $\pi \in \symm{n}$, or $\pi$ \emph{involves} $\tau$, denoted $\tau \leq \pi$, if there exists a substring $\vect{u} = \pi_{i_1} \pi_{i_2} \dots \pi_{i_\ell}$ of $\pi$ ($i_1 < i_2 < \dots < i_\ell$) such that $\red(\vect{u}) = \tau$.

Lehtonen and Pöschel~\cite{LehPos} formalized the notion of permutation pattern by making use of order\hyp{}isomorphisms and functional composition as follows.
For any $S \subseteq \nset{n}$ with $\card{S} = \ell$, let $h_S \colon \nset{\ell} \to S$ be the order\hyp{}isomorphism $(\nset{\ell}, {\leq}) \to (S, {\leq})$, where the two sets $\nset{\ell}$ and $S$ are equipped with the restriction of the natural order of natural numbers to the respective subsets of $\IN$.
For $\pi \in \symm{n}$, we define $\pi_S \colon \nset{\ell} \to \nset{\ell}$ as $\pi_S = h^{-1}_{\pi(S)} \circ \pi|_S \circ h_S$.
Being a composition of bijective maps, $\pi_S$ is clearly a permutation on $\nset{\ell}$.
Then the $\ell$-patterns of $\pi$ are precisely the permutations of the form $\pi_S$ for some $S \subseteq \nset{n}$ with $\card{S} = \ell$.

For $\pi \in \symm{n}$ and $\ell \leq n$, denote by $\Pat^{(\ell)} \pi$ the set of all $\ell$\hyp{}patterns of $\pi$, i.e., $\Pat^{(\ell)} \pi := \{\tau \in \symm{\ell} \mid \tau \leq \pi\} = \{\pi_S \mid S \subseteq \nset{n},\, \card{S} = \ell\}$.
For arbitrary subsets $S \subseteq \symm{\ell}$ and $T \subseteq \symm{n}$, let
\begin{align*}
\Pat^{(\ell)} T &:= \bigcup_{\pi \in T} \Pat^{(\ell)} \pi, \\
\Comp^{(n)} S &:= \{\pi \in \symm{n} \mid \Pat^{(\ell)} \pi \subseteq S\}.
\end{align*}
As shown by Lehtonen and Pöschel~\cite{LehPos}, the maps $\Comp^{(n)}$ and $\Pat^{(\ell)}$ constitute a monotone Galois connection between $\symm{\ell}$ and $\symm{n}$.
Furthermore, the operator $\Comp^{(n)}$ behaves well with respect to composition of permutations.

\begin{lemma}[{Lehtonen, Pöschel~\cite[Lemma~2.6]{LehPos}}]
Let $\pi, \tau \in \symm{n}$, let $\ell \in \nset{n}$, and let $S \subseteq \nset{n}$.
Then the following statements hold.
\begin{enumerate}[\upshape (i)]
\item $(\pi \circ \tau)_S = \pi_{\tau(S)} \circ \tau_S$.
\item $\Pat^{(\ell)} \pi \circ \tau \subseteq (\Pat^{(\ell)} \pi) \circ (\Pat^{(\ell)} \tau)$.
\end{enumerate}
\end{lemma}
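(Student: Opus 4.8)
The plan is to prove (i) by a direct manipulation of the defining formula $\pi_S = h^{-1}_{\pi(S)} \circ \pi|_S \circ h_S$, and then to obtain (ii) as an immediate corollary of (i) together with the fact that $\tau$, being a permutation, preserves cardinalities of subsets.

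For (i), I would begin from the definition
\[
(\pi \circ \tau)_S = h^{-1}_{(\pi \circ \tau)(S)} \circ (\pi \circ \tau)|_S \circ h_S
\]
and record two elementary observations: first, $(\pi \circ \tau)(S) = \pi(\tau(S))$; second, since $\tau|_S$ is a bijection of $S$ onto $\tau(S)$, the restriction factors as $(\pi \circ \tau)|_S = \pi|_{\tau(S)} \circ \tau|_S$. Substituting these yields
\[
(\pi \circ \tau)_S = h^{-1}_{\pi(\tau(S))} \circ \pi|_{\tau(S)} \circ \tau|_S \circ h_S.
\]
The one trick is then to insert the identity map $h_{\tau(S)} \circ h^{-1}_{\tau(S)} = \id$ between $\pi|_{\tau(S)}$ and $\tau|_S$ and to regroup the factors, obtaining
\[
(\pi \circ \tau)_S = \bigl(h^{-1}_{\pi(\tau(S))} \circ \pi|_{\tau(S)} \circ h_{\tau(S)}\bigr) \circ \bigl(h^{-1}_{\tau(S)} \circ \tau|_S \circ h_S\bigr) = \pi_{\tau(S)} \circ \tau_S,
\]
where the last step is just the definitions of $\pi_{\tau(S)}$ and of $\tau_S$. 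Throughout, one notes that $\card{S} = \card{\tau(S)} = \card{\pi(\tau(S))} = \card{S}$ because $\pi$ and $\tau$ are permutations, so that each order-isomorphism $h_\bullet$ and each $\pi_\bullet$, $\tau_\bullet$ appearing above is a well-defined permutation of $\nset{\ell}$ (with $\ell = \card{S}$), and in particular is only applied when $\card{S} = \ell$.

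For (ii), I would take an arbitrary $\rho \in \Pat^{(\ell)}(\pi \circ \tau)$, so that $\rho = (\pi \circ \tau)_S$ for some $S \subseteq \nset{n}$ with $\card{S} = \ell$, and apply (i) to write $\rho = \pi_{\tau(S)} \circ \tau_S$. Since $\card{S} = \ell$ we have $\tau_S \in \Pat^{(\ell)} \tau$; since $\tau$ is a bijection, $\card{\tau(S)} = \ell$, so $\pi_{\tau(S)} \in \Pat^{(\ell)} \pi$. Hence $\rho \in (\Pat^{(\ell)} \pi) \circ (\Pat^{(\ell)} \tau)$, which is the asserted inclusion.

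I do not expect a genuine obstacle here: the argument is short, and the only point that requires a little care is the bookkeeping of domains and codomains of the restricted maps and of the order-isomorphisms — in particular, checking that the correct intermediate set in the factorisation of $(\pi \circ \tau)|_S$ is $\tau(S)$ and not $S$ itself, and that the cancelled pair $h_{\tau(S)} \circ h^{-1}_{\tau(S)}$ is exactly the identity on $\tau(S)$. Everything else is routine.
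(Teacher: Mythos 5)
Your proof is correct. The paper itself gives no proof of this lemma---it is quoted verbatim from Lehtonen and P\"oschel \cite{LehPos}---and your argument (factor $(\pi\circ\tau)|_S$ through $\tau(S)$, insert $h_{\tau(S)}\circ h^{-1}_{\tau(S)}$, regroup, then deduce (ii) from (i) using $\card{\tau(S)}=\card{S}=\ell$) is exactly the direct verification from the definitions that one would expect, with the domain/codomain bookkeeping handled properly.
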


This fact establishes the basis for a perhaps surprising connection between permutation patterns and permutation groups.

\begin{proposition}[{Lehtonen, Pöschel~\cite[Proposition~3.1]{LehPos}}]
\label{prop:CompG-group}
If $G$ is a subgroup of $\symm{\ell}$, then $\Comp^{(n)} G$ is a subgroup of $\symm{n}$.
\end{proposition}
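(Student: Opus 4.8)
The plan is to verify directly the three subgroup axioms for $H := \Comp^{(n)} G$, leaning entirely on the compositional identities for pattern operators recorded in the preceding lemma. Recall that $\pi \in H$ means precisely that $\pi_S \in G$ for every $S \subseteq \nset{n}$ with $\card{S} = \ell$. For nonemptiness, I would first observe that $\id \in \symm{n}$ lies in $H$, since for every $\ell$-subset $S$ one has $(\id)_S = \id \in \symm{\ell}$, which belongs to $G$ because $G$ is a subgroup.

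For closure under composition, given $\pi, \tau \in H$, I would invoke part~(ii) of the preceding lemma to obtain $\Pat^{(\ell)}(\pi \circ \tau) \subseteq (\Pat^{(\ell)} \pi) \circ (\Pat^{(\ell)} \tau)$; since $\Pat^{(\ell)} \pi \subseteq G$ and $\Pat^{(\ell)} \tau \subseteq G$ and $G$ is closed under products, the right-hand side is contained in $G$, so $\pi \circ \tau \in H$. For closure under inverses, let $\pi \in H$, fix an $\ell$-subset $S$, and set $T := \pi^{-1}(S)$. Applying part~(i) of the lemma to the pair $(\pi, \pi^{-1})$ gives $\id = (\pi \circ \pi^{-1})_S = \pi_{\pi^{-1}(S)} \circ (\pi^{-1})_S = \pi_T \circ (\pi^{-1})_S$, whence $(\pi^{-1})_S = (\pi_T)^{-1}$; since $\pi_T \in \Pat^{(\ell)} \pi \subseteq G$ and $G$ is closed under inverses, $(\pi^{-1})_S \in G$. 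As $S$ was arbitrary, $\pi^{-1} \in H$.

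I do not expect any genuine obstacle here: the whole argument is a short bookkeeping exercise built on the preceding lemma. The single point that requires a moment's care is the inverse step, where one must read the identity $(\pi^{-1})_S = (\pi_{\pi^{-1}(S)})^{-1}$ off part~(i) of the lemma (taking $\tau = \pi^{-1}$) rather than take it for granted; once this is in hand, closure under inverses follows exactly as above, and the proof is complete.
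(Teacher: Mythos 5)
Your argument is correct. Note that the paper itself gives no proof of this proposition---it is quoted verbatim from Lehtonen and P\"oschel \cite[Proposition~3.1]{LehPos}---so there is nothing internal to compare against; but your direct verification of the subgroup axioms is sound: the identity permutation satisfies $(\id)_S = h_S^{-1}\circ \id|_S \circ h_S = \id_{\nset{\ell}} \in G$, closure under composition follows from part~(ii) of the cited Lemma~2.6 together with closure of $G$ under products, and your inverse step correctly extracts $(\pi^{-1})_S = \bigl(\pi_{\pi^{-1}(S)}\bigr)^{-1}$ from part~(i) applied to $\pi\circ\pi^{-1}=\id$. (One could shortcut the inverse step by observing that a nonempty subset of the finite group $\symm{n}$ closed under composition is automatically a subgroup, but your explicit computation is the more informative route and is exactly the point that ``requires a moment's care,'' as you say.)
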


For $\vect{a} \in A^n$, let us write
$\indexsingles(\vect{a}) := \{i \in \nset{n} \mid \forall j \in \nset{n} \colon a_i = a_j \implies i = j\}$.
In other words, the singletons of $\vect{a}$ are at the positions indexed by the elements of $\indexsingles(\vect{a})$.
Using this notation, we can write $\singles(\vect{a}) = \vect{a} h_{\indexsingles(\vect{a})}$.

\begin{lemma}
\label{lem:once-sigma}
For $\vect{a} \in A^n$ and $\sigma \in \symm{n}$, it holds that $\singles(\vect{a} \sigma) = \singles(\vect{a}) \sigma_S$, where $S = \sigma^{-1}(\indexsingles(\vect{a}))$.
\end{lemma}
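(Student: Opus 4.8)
The plan is to unfold both sides of the claimed identity using the formula $\singles(\vect{b}) = \vect{b}\, h_{\indexsingles(\vect{b})}$ recorded just before the statement, together with the definition $\sigma_S = h^{-1}_{\sigma(S)} \circ \sigma|_S \circ h_S$, and then verify that the two resulting maps $\nset{\ell} \to A$ coincide by a one-line computation with order-isomorphisms.

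First I would establish the elementary fact that singleton positions transform contravariantly, namely $\indexsingles(\vect{a} \sigma) = \sigma^{-1}(\indexsingles(\vect{a}))$. This holds because $\vect{a} \sigma$ and $\vect{a}$ have the same content, so for each $i \in \nset{n}$ the entry $(\vect{a}\sigma)_i = a_{\sigma(i)}$ occurs exactly once in $\vect{a}\sigma$ if and only if $a_{\sigma(i)}$ occurs exactly once in $\vect{a}$, i.e.\ if and only if $\sigma(i) \in \indexsingles(\vect{a})$. Writing $T := \indexsingles(\vect{a})$ and $\ell := \card{T} = \card{S}$, we thus have $\indexsingles(\vect{a}\sigma) = S$ and $\sigma(S) = T$.

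Next I would apply $\singles(\vect{b}) = \vect{b}\, h_{\indexsingles(\vect{b})}$ with $\vect{b} = \vect{a}\sigma$ to get $\singles(\vect{a}\sigma) = (\vect{a}\sigma)\, h_S = \vect{a}\,(\sigma \circ h_S)$, using the associativity of composition (recall $\vect{a}\tau$ abbreviates $\vect{a} \circ \tau$), while on the other side $\singles(\vect{a})\, \sigma_S = (\vect{a}\, h_T)\, \sigma_S = \vect{a}\,(h_T \circ \sigma_S)$. Hence it suffices to prove the equality of maps $\sigma \circ h_S = h_T \circ \sigma_S$ from $\nset{\ell}$ to $T$. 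But by definition $\sigma_S = h^{-1}_{\sigma(S)} \circ \sigma|_S \circ h_S = h^{-1}_T \circ \sigma|_S \circ h_S$, so $h_T \circ \sigma_S = \sigma|_S \circ h_S$, and since $h_S$ takes values in $S$ this equals $\sigma \circ h_S$, as required.

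I do not anticipate any genuine obstacle here: the argument is essentially bookkeeping, and the only point requiring care is keeping track of the domains and codomains of the order-isomorphisms $h_S$, $h_T$ and of the restriction $\sigma|_S$, so that the abbreviation $\vect{a}\tau = \vect{a} \circ \tau$ is manipulated associatively and the cancellation $h_T \circ h^{-1}_T = \id$ is applied on the correct set. The content-preservation fact underlying $\indexsingles(\vect{a}\sigma) = \sigma^{-1}(\indexsingles(\vect{a}))$ is immediate from $\sigma$ being a bijection.
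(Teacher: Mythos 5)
Your proof is correct and follows essentially the same route as the paper's: both first establish $\indexsingles(\vect{a}\sigma)=\sigma^{-1}(\indexsingles(\vect{a}))$ and then reduce the claim to the identity $\sigma\circ h_S = h_{\sigma(S)}\circ\sigma_S$, which the paper realizes by inserting $h_{T}\circ h_{T}^{-1}$ into the composition $\vect{a}\circ\sigma\circ h_S$. The bookkeeping with the order-isomorphisms matches the paper's computation exactly.
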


\begin{proof}
Observe first that
\begin{align*}
\indexsingles(\vect{a} \sigma)
&= \{i \in \nset{n} \mid \forall j \in \nset{n} \colon a_{\sigma(i)} = a_{\sigma(j)} \implies \sigma(i) = \sigma(j)\} \\
&= \sigma^{-1}(\{i \in \nset{n} \mid \forall j \in \nset{n} \colon a_i = a_j \implies i = j\})
= \sigma^{-1}(\indexsingles(\vect{a})).
\end{align*}
Then we can write
\begin{align*}
\singles(\vect{a} \sigma)
&= \vect{a} \circ \sigma \circ h_{\indexsingles(\vect{a} \sigma)}
= \vect{a} \circ h_{\indexsingles(\vect{a})} \circ h_{\indexsingles(\vect{a})}^{-1} \circ \sigma \circ h_{\indexsingles(\vect{a} \sigma)} \\
&= \vect{a} \circ h_{\indexsingles(\vect{a})} \circ h_{\sigma(\sigma^{-1}(\indexsingles(\vect{a})))}^{-1} \circ \sigma \circ h_{\sigma^{-1}(\indexsingles(\vect{a}))}
= \singles(\vect{a}) \sigma_{\sigma^{-1}(\indexsingles(\vect{a}))},
\end{align*}
which gives the desired result.
\end{proof}

\begin{example}
Consider the tuples $\vect{a} = 12234555$ and $\vect{b} = 12324526$ and the permutation $\sigma = 54238617$.
Then $\vect{a} \sigma = 43225515$, $\vect{b} \sigma = 42236512$, $\indexsingles(\vect{a}) = \{1, 4, 5\}$, $\indexsingles(\vect{b}) = \{1, 3, 5, 6, 8\}$.
Let $S := \sigma^{-1}(\indexsingles(\vect{a})) = \{1, 2, 7\}$, $T := \sigma^{-1}(\indexsingles(\vect{b})) = \{1, 4, 5, 6, 7\}$.
Then $\sigma_S = \red(541) = 321$ and $\sigma_T = \red(53861) = 32541$.
We have
\begin{align*}
& \singles(\vect{a}) = 134,   & & \singles(\vect{a} \sigma) = 431,   & & 431 = 134 \sigma_S, \\
& \singles(\vect{b}) = 13456, & & \singles(\vect{b} \sigma) = 43651, & & 43651 = 13456 \sigma_T.
\end{align*}
\end{example}

Let $f = f^* \circ {\cs}|_{A^n}$, let $\sigma \in \symm{n}$, and let $f^\sigma := f \circ \ontuples{\sigma} = f^* \circ {\cs}|_{A^n} \circ \ontuples{\sigma}$, that is, $f^\sigma(a_1, \dots, a_n) = f^*(\cs(a_{\sigma(1)}, \dots, a_{\sigma(n)}))$.
Obviously $\ms(\vect{a} \sigma) = \ms(\vect{a})$, and by Lemma~\ref{lem:once-sigma} we have
$\singles(\vect{a} \sigma) = \singles(\vect{a}) \sigma_S$, where $S = \indexsingles(\vect{a})$.
Therefore, if $\cs(\vect{a}) = (M, \vect{u})$, then $\cs(\vect{a} \sigma) = (M, \vect{u} \sigma_S)$.

For $\ell \in Z(k,n)$ with $k = \card{A}$, we denote by $f^*_\ell$ the restriction of $f^*$ to the set $\mathcal{Z}^{(n)}_\ell(A)$.
We say that $\sigma \in \symm{\ell}$ is an \emph{invariant} of $f^*_\ell$ if $f^*_\ell(M, \vect{a}) = f^*_\ell(M, \vect{a} \sigma)$ for all $(M, \vect{a}) \in \mathcal{Z}^{(n)}_\ell(A)$.
Denote by $\Inv f^*_\ell$ the set of all invariants of $f^*_\ell$.
It is clear that $\Inv f^*_\ell$ is a permutation group, a subgroup of $\symm{\ell}$.

\begin{lemma}
\label{lem:symmetries}
Assume that $\card{A} = k$.
Let $f = f^* \circ {\cs}|_{A^n}$, $g = g^* \circ {\cs}|_{A^n}$ for some $f^*, g^* \colon \mathcal{Z}^{(n)}(A) \to B$, and let $\sigma \in \symm{n}$.
Then the following statements hold.
\begin{enumerate}[\rm (i)]
\item\label{lem:symmetries:1}
$g = f \circ \ontuples{\sigma}$ if and only if for every $\ell \in Z(k,n)$ and for every $\tau \in \Pat^{(\ell)} \sigma$, $g^*_\ell(M, \vect{a}) = f^*_\ell(M, \vect{a} \tau)$ for all $(M, \vect{a}) \in \mathcal{Z}^{(n)}_\ell(A)$.

\item\label{lem:symmetries:2}
$\sigma \in \Inv f$ if and only if $\Pat^{(\ell)} \sigma \subseteq \Inv f^*_\ell$ for every $\ell \in Z(k,n)$.

\item\label{lem:symmetries-Invf}
$\Inv f = \bigcap_{\ell \in Z(k,n)} \Comp^{(n)} \Inv f^*_\ell$.
\end{enumerate}
\end{lemma}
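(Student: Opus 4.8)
The plan is to prove (i) directly and then obtain (ii) and (iii) as formal consequences. Write $k = \card{A}$ and recall from the discussion just before the lemma that for any $\vect{a} \in A^n$ with $\cs(\vect{a}) = (M, \vect{u})$ one has $\cs(\vect{a}\sigma) = (M, \vect{u}\sigma_S)$ with $S = \indexsingles(\vect{a})$; this follows from $\ms(\vect{a}\sigma) = \ms(\vect{a})$ together with Lemma~\ref{lem:once-sigma}. Here $\card{S}$ equals the number of singletons of $\vect{a}$, i.e.\ $\card{S} = \card{\vect{u}} =: \ell$, so $\sigma_S \in \Pat^{(\ell)}\sigma$ and $(M,\vect{u}) \in \mathcal{Z}^{(n)}_\ell(A)$, whence $\ell \in Z(k,n)$. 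Since ${\cs}|_{A^n}$ is surjective onto $\mathcal{Z}^{(n)}(A)$, on which $f^*$ and $g^*$ are defined, the equation $g = f\circ\ontuples{\sigma}$ is equivalent to $g^*(\cs(\vect{a})) = f^*(\cs(\vect{a}\sigma))$ for all $\vect{a}\in A^n$, which by the remark above reads
\[
g^*_\ell(M,\vect{u}) = f^*_\ell\bigl(M, \vect{u}\,\sigma_{\indexsingles(\vect{a})}\bigr) \qquad\text{for every }\vect{a}\in A^n,\ (M,\vect{u}) = \cs(\vect{a}).
\]

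It then remains to see that this last condition is equivalent to the condition in (i). One implication is immediate: given $\vect{a}$, put $\ell = \card{\vect{u}}$ where $(M,\vect{u}) = \cs(\vect{a})$; then $(M,\vect{u})\in\mathcal{Z}^{(n)}_\ell(A)$ forces $\ell \in Z(k,n)$, and $\sigma_{\indexsingles(\vect{a})}\in\Pat^{(\ell)}\sigma$, so the condition in (i) applies. For the converse I would establish the following \emph{realizability claim}: for each $\ell \in Z(k,n)$, each $(M,\vect{u}) \in \mathcal{Z}^{(n)}_\ell(A)$, and each $S \subseteq \nset{n}$ with $\card{S} = \ell$, there is a tuple $\vect{a}\in A^n$ with $\cs(\vect{a}) = (M,\vect{u})$ and $\indexsingles(\vect{a}) = S$ — built by placing the entries of $\vect{u}$ at the positions of $S$ in increasing order and distributing the non-singleton part of $M$, a multiset of cardinality $n-\ell$ in which every element has multiplicity at least $2$, over the remaining $n-\ell$ positions. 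Since $\Pat^{(\ell)}\sigma = \{\sigma_S : S\subseteq\nset{n},\ \card{S} = \ell\}$ and $\mathcal{Z}^{(n)}_\ell(A)\neq\emptyset$ precisely for $\ell\in Z(k,n)$, writing an arbitrary $\tau\in\Pat^{(\ell)}\sigma$ as $\sigma_S$ and choosing $\vect{a}$ by realizability completes the equivalence, proving (i).

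Part (ii) is (i) applied with $g = f$ and $g^* = f^*$: then $\sigma \in \Inv f$ means $f = f\circ\ontuples{\sigma}$, which by (i) holds iff for every $\ell \in Z(k,n)$ and every $\tau \in \Pat^{(\ell)}\sigma$ we have $f^*_\ell(M,\vect{a}) = f^*_\ell(M, \vect{a}\tau)$ for all $(M,\vect{a}) \in \mathcal{Z}^{(n)}_\ell(A)$, i.e.\ $\tau \in \Inv f^*_\ell$; hence the condition is $\Pat^{(\ell)}\sigma \subseteq \Inv f^*_\ell$ for every $\ell \in Z(k,n)$. Part (iii) is a reformulation via the definition of $\Comp^{(n)}$: the inclusion $\Pat^{(\ell)}\sigma \subseteq \Inv f^*_\ell$ is exactly $\sigma \in \Comp^{(n)}\Inv f^*_\ell$, so $\Inv f = \bigcap_{\ell \in Z(k,n)} \Comp^{(n)}\Inv f^*_\ell$; and since each $\Inv f^*_\ell$ is a subgroup of $\symm{\ell}$, Proposition~\ref{prop:CompG-group} makes each $\Comp^{(n)}\Inv f^*_\ell$ a subgroup of $\symm{n}$, so the intersection is one too, consistently with $\Inv f$ being a group.

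The main obstacle is the realizability claim in (i). Its one genuinely delicate point is that it would fail for $\ell = n-1$, because no $n$-tuple has exactly $n-1$ singletons — which is exactly why $n-1$ is excluded from $Z(k,n)$; so in the construction I would keep track that the non-singleton part of $M$ has cardinality $n-\ell$ equal to $0$ or at least $2$, and I would be careful that the index $\ell$ ranges over $Z(k,n)$ throughout, so that no quantifier in the equivalence runs over an empty or mismatched index set.
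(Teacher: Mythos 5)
Your proof is correct and follows essentially the same route as the paper's: part (i) is obtained by combining Lemma~\ref{lem:once-sigma} with the observation that every pair $((M,\vect{u}),S)$ with $\card{S}=\ell\in Z(k,n)$ is realized by some tuple $\vect{a}\in A^n$ (a point the paper uses without comment and you rightly justify, including why $\ell=n-1$ must be excluded), and parts (ii) and (iii) follow formally via the definition of $\Comp^{(n)}$. One small correction: Lemma~\ref{lem:once-sigma} gives $\singles(\vect{a}\sigma)=\singles(\vect{a})\sigma_S$ with $S=\sigma^{-1}(\indexsingles(\vect{a}))$, not $S=\indexsingles(\vect{a})$ (the paper's own paragraph preceding the lemma contains the same slip), so to realize $\tau=\sigma_S$ you should construct $\vect{a}$ with $\indexsingles(\vect{a})=\sigma(S)$; this is harmless because your realizability claim applies to every $\ell$-element subset.
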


\begin{proof}
\begin{inparaenum}[\rm (i)]
\item
Assume first that $g = f \circ \ontuples{\sigma}$.
Let $\ell \in Z(k,n)$ and $\tau \in \Pat^{(\ell)} \sigma$. Then there exists $S \subseteq \nset{n}$ with $\card{S} = \ell$ such that $\tau = \sigma_S$.
Assume that $(M, \vect{a}) \in \mathcal{Z}^{(n)}_\ell(A)$, and let $\vect{b} \in A^n$ be a tuple such that $\ms(\vect{b}) = M$, $\singles(\vect{b}) = \vect{a}$, and $\indexsingles(\vect{b}) = \sigma(S)$.
Then $\sigma^{-1}(\indexsingles(\vect{b})) = S$, and $\singles(\vect{b} \sigma) = \vect{a} \sigma_S$ holds by Lemma~\ref{lem:once-sigma}.
Consequently,
\begin{multline*}
g^*_\ell(M, \vect{a})
= g^*(\ms(\vect{b}), \singles(\vect{b}))
= g(\vect{b}) \\
= f(\vect{b} \sigma)
= f^*(\ms(\vect{b} \sigma), \singles(\vect{b} \sigma))
= f^*_\ell(M, \vect{a} \sigma_S)
= f^*_\ell(M, \vect{a} \tau).
\end{multline*}

Assume then that for every $\ell \in Z(k,n)$ and for every $\tau \in \Pat^{(\ell)} \sigma$, the equality $g^*_\ell(M, \vect{a}) = f^*_\ell(M, \vect{a} \tau)$ holds for all $(M, \vect{a}) \in \mathcal{Z}^{(n)}_\ell(A)$.
Let $\vect{b} \in A^n$, and let $M := \ms(\vect{b})$, $\vect{a} := \singles(\vect{b})$, $\ell := \card{\vect{a}}$, $S := \sigma^{-1}(\indexsingles(\vect{b}))$.
Since $\sigma_S \in \Pat^{(\ell)} \sigma$ and
since $\vect{a} \sigma_S = \singles(\vect{b} \sigma)$ holds by Lemma~\ref{lem:once-sigma}, we have
\begin{multline*}
g(\vect{b})
= g^*(\ms(\vect{b}), \singles(\vect{b}))
= g^*_\ell(M, \vect{a}) \\
= f^*_\ell(M, \vect{a} \sigma_S)
= f^*(\ms(\vect{b} \sigma), \singles(\vect{b} \sigma))
= f(\vect{b} \sigma).
\end{multline*}
We conclude that $g = f \circ \ontuples{\sigma}$.

\item
Immediate consequence of part \eqref{lem:symmetries:1} with $f = g$.

\item
By part~\eqref{lem:symmetries:2}, the condition $\sigma \in \Inv f$ is equivalent to the condition that for all $\ell \in \nset{k}$, $\Pat^{(\ell)} \sigma \subseteq \Inv f^*_\ell$.
By the definition of $\Comp^{(n)}$, this in turn is equivalent to the condition that for all $\ell \in Z(k,n)$, $\sigma \in \Comp^{(n)} \Inv f^*_\ell$.
This is equivalent to $\sigma \in \bigcap_{\ell \in \nset{k}} \Comp^{(n)} \Inv f^*_\ell$.
\end{inparaenum}
\end{proof}

Lemma~\ref{lem:symmetries}\eqref{lem:symmetries-Invf} provides an answer to Question~\ref{q:inv-groups}.

\begin{theorem}
Assume that $A$ and $B$ are sets with $\card{A} = k \geq 2$ and $\card{B} \geq 2$.
Then a subgroup $G$ of $\symm{n}$ is the invariance group of a function $f \colon A^n \to B$ determined by $\cs$ if and only if there exists a family $(G_\ell)_{\ell \in Z(k,n)}$ of permutation groups $G_\ell \leq \symm{\ell}$ such that $G = \bigcap_{\ell \in Z(k,n)} \Comp^{(n)} G_\ell$.
\end{theorem}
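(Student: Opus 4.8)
The plan is to read the statement off Lemma~\ref{lem:symmetries}\eqref{lem:symmetries-Invf}, which already expresses the invariance group of any $\cs$-determined function as an intersection of $\Comp^{(n)}$-images of the groups $\Inv f^*_\ell$. This makes the ``only if'' direction essentially immediate and reduces the ``if'' direction to a realization problem: given an arbitrary family $(G_\ell)_{\ell \in Z(k,n)}$ with $G_\ell \le \symm{\ell}$, produce $f^* \colon \mathcal{Z}^{(n)}(A) \to B$ whose restriction $f^*_\ell$ to $\mathcal{Z}^{(n)}_\ell(A)$ satisfies $\Inv f^*_\ell = G_\ell$ for every $\ell$.

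For the ``only if'' direction, write $f = f^* \circ {\cs}|_{A^n}$ and set $G_\ell := \Inv f^*_\ell$ for each $\ell \in Z(k,n)$; this is a subgroup of $\symm{\ell}$ (as noted just before Lemma~\ref{lem:symmetries}), and $G = \Inv f = \bigcap_{\ell \in Z(k,n)} \Comp^{(n)} G_\ell$ by Lemma~\ref{lem:symmetries}\eqref{lem:symmetries-Invf}. For the ``if'' direction, fix a linear order on $A$; for $\vect{a} \in A^\sharp$ of length $\ell$ let $\red(\vect{a}) \in \symm{\ell}$ be its reduced form relative to this order (the obvious extension of the reduction map to strings over a linearly ordered set), and fix distinct $0, 1 \in B$. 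Since $\mathcal{Z}^{(n)}(A)$ is the disjoint union of the sets $\mathcal{Z}^{(n)}_\ell(A)$, $\ell \in Z(k,n)$, we may define $f^*$ blockwise: put $f^*(M, \vect{a}) := 1$ if $\card{\vect{a}} = \ell$ and $\red(\vect{a}) \in G_\ell$, and $f^*(M, \vect{a}) := 0$ otherwise. The key observation is that $\red(\vect{a}\sigma) = \red(\vect{a})\sigma$ for $\sigma \in \symm{\ell}$, so $f^*_\ell(M, \vect{a}\sigma) = 1$ iff $\red(\vect{a})\sigma \in G_\ell$. Hence if $\sigma \in G_\ell$, closure of $G_\ell$ under right multiplication by $\sigma$ and $\sigma^{-1}$ gives $f^*_\ell(M, \vect{a}) = f^*_\ell(M, \vect{a}\sigma)$ for all $(M, \vect{a})$; and if $\sigma \notin G_\ell$, picking any $(M, \vect{a}_0) \in \mathcal{Z}^{(n)}_\ell(A)$ (nonempty because $\ell \in Z(k,n)$) and letting $\vect{a}$ list the singleton set of $M$ in increasing order yields $f^*_\ell(M, \vect{a}) = 1 \ne 0 = f^*_\ell(M, \vect{a}\sigma)$. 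Therefore $\Inv f^*_\ell = G_\ell$ for every $\ell$, and $f := f^* \circ {\cs}|_{A^n}$ has $\Inv f = \bigcap_{\ell \in Z(k,n)} \Comp^{(n)} G_\ell = G$ by Lemma~\ref{lem:symmetries}\eqref{lem:symmetries-Invf}.

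The substantive step is the realization argument in the ``if'' direction, and the single point one must get right is the compatibility $\red(\vect{a}\sigma) = \red(\vect{a})\sigma$ of reduction with the right action of $\symm{\ell}$ on singleton strings (which is also implicit in the computation before Lemma~\ref{lem:symmetries}); once this is in place, the choice ``$f^*_\ell$ is the indicator of $G_\ell$ composed with $\red$'' works precisely because the right-translation stabilizer of a subgroup, viewed as a subset of the ambient group, is the subgroup itself. The remaining points are routine: that gluing the $f^*_\ell$ over the pairwise disjoint blocks $\mathcal{Z}^{(n)}_\ell(A)$ creates no interference, and that the degenerate cases $\ell \in \{0,1\}$ (where $\symm{\ell}$ is trivial) are handled automatically by the same formula.
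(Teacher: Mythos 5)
Your proposal is correct and follows essentially the same route as the paper: the ``only if'' direction is read off Lemma~\ref{lem:symmetries}\eqref{lem:symmetries-Invf}, and the ``if'' direction reduces to realizing each $G_\ell$ as $\Inv f^*_\ell$, which the paper simply declares ``clear'' and you verify with the explicit indicator-of-$G_\ell$-composed-with-$\red$ construction. The extra detail (in particular the identity $\red(\vect{a}\sigma)=\red(\vect{a})\sigma$ and the use of the increasing rearrangement to witness $\sigma\notin\Inv f^*_\ell$) is sound and only makes explicit what the paper leaves to the reader.
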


\begin{proof}
It is clear that, for any $\ell \in Z(k,n)$, every subgroup of $\symm{\ell}$ is a possible invariance group of a function $f^*_\ell \colon \mathcal{Z}^{(n)}_\ell(A) \to B$.
The claim then follows immediately from Lemma~\ref{lem:symmetries}\eqref{lem:symmetries-Invf}.
\end{proof}

In order to provide a more explicit answer to Question~\ref{q:inv-groups}, we would need to know which permutation groups are of the form $\Comp^{(n)} G$ for some permutation group $G \leq \symm{\ell}$, $\ell \in Z(k,n)$.
Such groups were investigated by the current author in \cite{Lehtonen-Pat-Comp,LehPos}, and we provide a coarse summary of the relevant results from \cite{Lehtonen-Pat-Comp} here.

We will use the following notation for certain permutations in $\symm{n}$:
\begin{compactitem}
\item the \emph{identity permutation} $\asc{n} = 1 2 \ldots n$,
\item the \emph{descending permutation} $\desc{n} = n (n-1) \ldots 1$,
\item the \emph{natural cycle} $\natcycle{n} = (1 \; 2 \; \cdots \; n) = 2 3 \ldots n 1$.
\end{compactitem}
The following subgroups of the symmetric group $\symm{n}$ will appear in the statement of the results:
\begin{compactitem}
\item the trivial group $\{\asc{n}\}$,
\item the group $\gensg{\desc{n}} = \{\asc{n}, \desc{n}\}$ generated by the descending permutation $\desc{n}$,
\item the \emph{natural cyclic group} $\cycl{n} := \gensg{\natcycle{n}}$,
\item the \emph{natural dihedral group} $\dihed{n} := \gensg{\natcycle{n}, \desc{n}}$,
\item for $a, b \in \IN$ with $a + b \leq n$, the group $\symm{n}^{a,b}$ of permutations that map the set $\{1, \dots, a\}$ onto itself and map the set $\{n - b + 1, \dots, n\}$ onto itself and fix all remaining points.
\end{compactitem}

\begin{proposition}[\cite{Lehtonen-Pat-Comp}]
\label{prop:Compn-G}
Let $G$ be a subgroup of $\symm{\ell}$.
\begin{enumerate}[\upshape (i)]
\item If $G$ is neither an intransitive group nor an imprimitive group with $\natcycle{\ell} \notin G$,
then, for every $n \geq \ell + 2$, $\Comp^{(n)} G$ is one of the following groups:
$\symm{n}$,
$\dihed{n}$,
$\cycl{n}$,
$\gensg{\desc{n}}$,
$\{\asc{n}\}$.

\item Assume that $G$ is an intransitive group or an imprimitive group with $\natcycle{\ell} \notin G$.
Let $a$ and $b$ be the largest numbers $\alpha$ and $\beta$, respectively, such that $\symm{\ell}^{\alpha,\beta} \leq G$.
Then there exists a number $m$ such that $\Comp^{(n)} G$ equals either $\symm{n}^{a,b}$ or $\gensg{\symm{n}^{a,b}, \desc{n}}$ for all $n \geq \ell + m$.
The smallest such number $m$ satisfies $m \leq \ell - 1$ in the case when $G$ is intransitive, and $m \leq p$, where $p$ is the largest proper divisor of $n$, in the case when $G$ is imprimitive and $\natcycle{\ell} \notin G$.
\end{enumerate}
\end{proposition}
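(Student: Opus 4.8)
The plan is to determine $\Comp^{(n)} G$ by a two-sided estimate, using throughout that $\Comp^{(n)}$ is monotone, commutes with intersections (it is the upper adjoint of a Galois connection), and by Proposition~\ref{prop:CompG-group} always yields a subgroup of $\symm{n}$. For the lower bound I would first record the $\ell$-patterns of the distinguished permutations: a direct check gives $\Pat^{(\ell)} \desc{n} = \{\desc{\ell}\}$ and $\Pat^{(\ell)} \natcycle{n} = \{\asc{\ell}, \natcycle{\ell}\} \subseteq \cycl{\ell}$, while a slightly longer argument shows that every $\ell$-pattern of a block permutation in $\symm{n}^{a,b}$ lies in $\symm{\ell}^{a',b'}$ for some $a' \le a$ and $b' \le b$ — in a window the positions from the front block receive the smallest values, those from the back block the largest, and the middle positions are fixed, so the induced pattern again splits into a front block, a pointwise-fixed middle, and a back block. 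Because $\Comp^{(n)} G$ is a group, these computations already place the expected candidate inside $\Comp^{(n)} G$: one of $\symm{n}$, $\dihed{n}$, $\cycl{n}$, $\gensg{\desc{n}}$, $\{\asc{n}\}$ in part (i), or $\symm{n}^{a,b}$, respectively $\gensg{\symm{n}^{a,b}, \desc{n}}$, in part (ii), with the precise alternative dictated by whether $G = \symm{\ell}$ and by which of $\natcycle{\ell}$, $\desc{\ell}$ lie in $G$.

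The substance is the reverse inclusion, which I would prove according to the permutation-group type of $G$. The engine is a combinatorial lemma: for $n$ large enough relative to $\ell$, every $\pi \in \symm{n}$ lying outside the candidate group has an $\ell$-element window $S$ with $\pi_S \notin G$, so that $\pi \notin \Comp^{(n)} G$. In part (i), where $G$ is transitive and either primitive or imprimitive with $\natcycle{\ell} \in G$, one shows that unless $\pi$ is a power of $\natcycle{n}$, possibly post-composed with $\desc{n}$, its windows realise a set of patterns too rich to fit inside any proper subgroup having one of these transitivity features; Erd\H{o}s--Szekeres-type counting produces the witnessing windows, and a short inspection of which of $\asc{\ell}$, $\natcycle{\ell}$, $\desc{\ell}$ lie in $G$ then singles out the correct one of the five small groups. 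In part (ii), where $G$ is intransitive or imprimitive with $\natcycle{\ell} \notin G$, the relevant invariant of $G$ is precisely the pair $(a,b)$; one shows that any $\pi$ failing to stabilise $\{1,\dots,a\}$ and $\{n-b+1,\dots,n\}$ — up to the global reversal $\desc{n}$ — must carry some point across the middle, and that this transgression can be exhibited inside an $\ell$-window once $n$ exceeds $\ell$ by the stated amount: by $\ell-1$ when $G$ is intransitive (enough room for both blocks, the offending point, and filler), and by the largest proper divisor $p$ of $n$ when $G$ is imprimitive with $\natcycle{\ell} \notin G$ (so that no block system of $G$ can be realigned with the window). The stabilisation for all $n \ge \ell + m$ is then obtained by an induction on $n$ that deletes a suitably chosen point: patterns only shrink under deletion, so a permutation all of whose $\ell$-patterns lie in $G$ restricts, after each deletion, to one already constrained by the result at the shorter length, and sufficiently many deletions pin $\pi$ to the candidate group.

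I expect the principal obstacle to be the witness construction of part (ii) for imprimitive $G$ with $\natcycle{\ell} \notin G$. There one must rule out the possibility that, for some $\pi$ outside $\symm{n}^{a,b}$ and $\gensg{\symm{n}^{a,b}, \desc{n}}$, every $\ell$-window happens to respect one of the block systems of $G$ and hence fails to detect non-membership; excluding this uniformly in $n$ is what forces the dependence on the divisor structure and produces the bound $m \le p$. Managing this, together with the bookkeeping of how the block sizes of $G$ interact with $\ell$, is the delicate part of the argument; the primitive and intransitive cases, and all of the lower-bound computations, are comparatively routine.
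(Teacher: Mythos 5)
First, note that the paper does not prove this proposition at all: it is an imported black box, and the ``proof'' consists of the single sentence that it combines Theorems 4.3, 4.5, 4.6, 4.12, 4.15 and 4.16 of the cited paper \cite{Lehtonen-Pat-Comp}. So you are attempting to reconstruct from scratch what is, in the source, the main content of a separate paper. Your lower-bound computations are sound and easily checked: $\Pat^{(\ell)}\desc{n}=\{\desc{\ell}\}$, $\Pat^{(\ell)}\cycl{n}\subseteq\cycl{\ell}$, and the verification that an $\ell$-window of an element of $\symm{n}^{a,b}$ induces a pattern in $\symm{\ell}^{a',b'}$ with $a'\leq a$, $b'\leq b$ (the middle positions really are fixed points of the induced pattern) all go through, and together with Proposition~\ref{prop:CompG-group} they do place the candidate group inside $\Comp^{(n)}G$.

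The genuine gap is that the reverse inclusions --- which are the entire substance of the result --- are asserted rather than proved. In part (i) the claim that any $\pi$ outside the candidate group has an $\ell$-window witnessing $\pi_S\notin G$ is exactly the hard theorem; ``Erd\H{o}s--Szekeres-type counting produces the witnessing windows'' names no construction, and it is precisely here that the hypothesis $n\geq\ell+2$ must enter, which your sketch never uses. In part (ii) your description of the permutations to be excluded is incomplete: an element can stabilise $\{1,\dots,a\}$ and $\{n-b+1,\dots,n\}$ setwise and still lie outside $\symm{n}^{a,b}$ by permuting the middle block nontrivially, so a witness construction based only on ``carrying a point across the middle'' does not cover all cases. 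The quantitative claims $m\leq\ell-1$ and $m\leq p$ are likewise never derived, and your closing induction-by-deletion needs, for each candidate group $H\leq\symm{n}$, the separate (unproved) statement that a permutation all of whose one-point deletions lie in the corresponding group at level $n-1$ already lies in $H$; this fails without a correctly established base case. As an outline of how one would attack the theorem your plan is reasonable, but as it stands it proves only the easy half.
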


\begin{proof}
This theorem combines the main results of \cite{Lehtonen-Pat-Comp}:
\begin{inparaenum}[\upshape (i)]
\item Theorems 4.3, 4.5, 4.6, 4.16 in \cite{Lehtonen-Pat-Comp},
\item Theorems 4.12 and 4.15 in \cite{Lehtonen-Pat-Comp}.
\end{inparaenum}
\end{proof}

\begin{corollary}
\label{cor:invariance}
Assume that $\card{A} = k$.
Let $f = f^* \circ {\cs}|_{A^n}$ for some $f^* \colon \mathcal{Z}^{(n)}(A) \to B$.
If $n \geq 2k - 3$, then $\Inv f$ is one of the following groups:
\begin{equation}
\symm{n},
\dihed{n},
\cycl{n},
\gensg{\desc{n}},
\{\asc{n}\},
\symm{n}^{a,b},
\gensg{\symm{n}^{c,c}, \desc{n}}
\label{eq:possible-groups}
\end{equation}
where $a, b, c \in \IN_+$ with $a + b \leq k - 1$ and $2c \leq k - 1$.
\end{corollary}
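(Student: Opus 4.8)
The plan is to combine Lemma~\ref{lem:symmetries}\eqref{lem:symmetries-Invf} with Proposition~\ref{prop:Compn-G} and then verify that the resulting list of groups is closed under intersection. Write $k = \card{A}$. The cases $k \leq 3$ are checked directly: for $k \leq 2$ no parameters $a,b,c$ as in \eqref{eq:possible-groups} exist and, by Lemma~\ref{lem:cs-new-2}, $f$ is totally symmetric once $n \geq 3$; so assume $k \geq 4$. Then $n \geq 2k-3 > k$, hence $Z(k,n) = \{0,1,\dots,k-1\}$, and Lemma~\ref{lem:symmetries}\eqref{lem:symmetries-Invf} gives
\[
\Inv f = \bigcap_{\ell \in Z(k,n)} \Comp^{(n)} \Inv f^*_\ell ,
\]
an intersection of groups, each of the form $\Comp^{(n)} G$ with $G \leq \symm{\ell}$ for some $\ell \leq k-1$.

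First I would show that each such factor $\Comp^{(n)} G$ lies in the list \eqref{eq:possible-groups}. For $\ell \leq 2$ this is immediate, since the only subgroups of $\symm{\ell}$ are $\{\asc{\ell}\}$ and $\symm{\ell}$, and $\Comp^{(n)} \symm{\ell} = \symm{n}$, $\Comp^{(n)} \{\asc{1}\} = \symm{n}$, while $\Comp^{(n)} \{\asc{2}\} = \{\asc{n}\}$ because a permutation all of whose $2$-patterns equal $12$ has no inversions. For $3 \leq \ell \leq k-1$ I would apply Proposition~\ref{prop:Compn-G}, and here the hypothesis $n \geq 2k-3$ is exactly what makes it applicable for all such $\ell$ at once: part~(i) requires $n \geq \ell+2$, and part~(ii) requires $n \geq \ell+m$ with $m \leq \ell-1$ (and smaller still in the imprimitive subcase), so $n \geq 2\ell-1$ suffices in either case, and $2\ell-1 \leq 2(k-1)-1 = 2k-3$. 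Thus $\Comp^{(n)} G$ is one of $\symm{n}, \dihed{n}, \cycl{n}, \gensg{\desc{n}}, \{\asc{n}\}$, or else $\symm{n}^{a,b}$ or $\gensg{\symm{n}^{a,b},\desc{n}}$ with $a+b \leq \ell \leq k-1$ (the bound $a+b \leq \ell$ being forced just because $\symm{\ell}^{a,b}$ must be a subgroup of $\symm{\ell}$). In the last case $\desc{n} \in \Comp^{(n)} G$ forces $\desc{\ell} \in G$, whence the set of pairs $(\alpha,\beta)$ with $\symm{\ell}^{\alpha,\beta} \leq G$ is symmetric; this gives $a=b=:c$ and, since $G \neq \symm{\ell}$ in part~(ii), $\symm{\ell}^{c,c} \leq G$, so $2c \leq \ell \leq k-1$.

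It then remains to verify that the collection $\mathcal{L}$ of groups appearing in \eqref{eq:possible-groups}, subject to $a+b \leq k-1$ and $2c \leq k-1$, is closed under finite intersection. I would do this by inspecting pairwise intersections: $\{\asc{n}\}$ is absorbing and $\symm{n}$ neutral; the substantive identities are $\symm{n}^{a,b} \cap \symm{n}^{a',b'} = \symm{n}^{\min(a,a'),\min(b,b')}$, $\gensg{\symm{n}^{c,c},\desc{n}} \cap \gensg{\symm{n}^{c',c'},\desc{n}} = \gensg{\symm{n}^{\min(c,c'),\min(c,c')},\desc{n}}$, and $\symm{n}^{a,b} \cap \gensg{\symm{n}^{c,c},\desc{n}} = \symm{n}^{\min(a,c),\min(b,c)}$; finally, any intersection involving one of $\dihed{n}, \cycl{n}, \gensg{\desc{n}}$ collapses to $\gensg{\desc{n}}$ or $\{\asc{n}\}$, because a non-trivial rotation or reflection fixes far too few points to sit inside an $\symm{n}^{a,b}$ whose blocks are so small. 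The point at which ``$n$ large'' is used is precisely the vanishing of cross-terms such as $\symm{n}^{a,b} \cap \symm{n}^{c,c}\desc{n}$: with $n \geq 2k-3$ and all block parameters at most $(k-1)/2$, the middle stretch $\{c+1,\dots,n-c\}$ has more than one point, so no permutation can both fix it pointwise and act on it as the reversal. Assembling the two steps, $\Inv f$ is an intersection of members of $\mathcal{L}$, hence a member of $\mathcal{L}$, which is the assertion.

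The step I expect to be the main obstacle is the closure-under-intersection analysis: although each individual computation is elementary, it requires a careful explicit description of $\symm{n}^{a,b}$, $\gensg{\symm{n}^{a,b},\desc{n}}$, $\dihed{n}$, and $\cycl{n}$ as permutation groups and an exhaustive check of the degenerate coincidences that the bound $n \geq 2k-3$ exists to forbid. A secondary delicate point is the passage, inside Proposition~\ref{prop:Compn-G}(ii), from $\gensg{\symm{n}^{a,b},\desc{n}}$ with possibly unequal $a,b$ to $\gensg{\symm{n}^{c,c},\desc{n}}$ with $2c \leq k-1$ --- i.e., making precise why the presence of $\desc{n}$ forces symmetric blocks.
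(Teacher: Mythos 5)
Your proposal is correct and follows exactly the route of the paper's own (much terser) proof: apply Lemma~\ref{lem:symmetries}\eqref{lem:symmetries-Invf} to write $\Inv f$ as $\bigcap_{\ell \in Z(k,n)} \Comp^{(n)} \Inv f^*_\ell$, invoke Proposition~\ref{prop:Compn-G} to place each factor in the list \eqref{eq:possible-groups}, and observe that the list is closed under intersections. The paper simply asserts the applicability of Proposition~\ref{prop:Compn-G} and the closure under intersections without the explicit verifications you supply (the bound $n \geq 2\ell - 1 \geq \ell + m$, the symmetrization $a = b = c$ forced by $\desc{n}$, and the pairwise intersection table), so your writeup is a faithful and more detailed version of the same argument.
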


\begin{proof}
By Lemma~\ref{lem:symmetries}\eqref{lem:symmetries-Invf},
$\Inv f = \bigcap_{\ell \in Z(k,n)} \Comp^{(n)} \Inv f^*_\ell$.
The invariance group $\Inv f^*_\ell$ is an arbitrary subgroup of $\symm{\ell}$, for each $\ell \in Z(k,n)$.
Proposition~\ref{prop:Compn-G} then implies that each one of the groups $\Comp^{(n)} \Inv f^*_\ell$ is among the groups listed in \eqref{eq:possible-groups}.
The set \eqref{eq:possible-groups} is closed under intersections, so we conclude that $\Inv f$ is a group in \eqref{eq:possible-groups}.
\end{proof}

Further information about permutation groups of the form $\Comp^{(n)} G$ can be found in \cite{Lehtonen-Pat-Comp,LehPos}.


\section{Distinct but similar functions determined by content and singletons}
\label{sec:similarity}

It might be possible that two distinct functions determined by $\cs$ are similar.
We now investigate the conditions under which this happens.

\begin{question}
\label{q:fg-cs}
Do there exist distinct functions $f^*, g^* \colon \mathcal{Z}^{(n)}(A) \to B$ such that $f^* \circ {\cs}|_{A^n} \simeq g^* \circ {\cs}|_{A^n}$?
Provide necessary and sufficient conditions for functions $f^*$ and $g^*$ to have this property.
\end{question}

The following simple group\hyp{}theoretical result will prove useful for approaching this problem.
Let $(G; \cdot)$ be a group. For an arbitrary nonempty subset $S$ of $G$, let $\differences S := S^{-1} S = \{x^{-1} y \mid x, y \in S\}$ be the set of differences of elements of $S$.

\begin{lemma}
\label{lem:generated-subgroups}
Let $G$ be a group, and let $S$ be a nonempty subset of $G$.
Then the following statements hold.
\begin{compactenum}[\upshape (i)]
\item\label{lem:generated-subgroups-1}
$\gensg{\differences S}$ is a subgroup of $\gensg{S}$.

\item\label{lem:generated-subgroups-2}
The following conditions are equivalent:
\begin{compactenum}[\upshape (a)]
\item $\gensg{\differences S} = \gensg{S}$,
\item $\gensg{\differences S} \cap S \neq \emptyset$,
\item $S \subseteq \gensg{\differences S}$.
\end{compactenum}
\end{compactenum}
\end{lemma}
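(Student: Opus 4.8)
The plan is to handle the two parts separately, with part~\eqref{lem:generated-subgroups-1} serving as an ingredient for the implication (c)$\implies$(a) in part~\eqref{lem:generated-subgroups-2}. For part~\eqref{lem:generated-subgroups-1}, I would simply observe that every element of $\differences S$ is already a word in the elements of $S$ and their inverses: if $x, y \in S$, then $x^{-1} y \in \gensg{S}$. Hence $\differences S \subseteq \gensg{S}$, and since $\gensg{S}$ is a subgroup it contains the subgroup generated by any of its subsets, so $\gensg{\differences S} \leq \gensg{S}$.

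For part~\eqref{lem:generated-subgroups-2}, I would prove the cycle of implications (a)$\implies$(b)$\implies$(c)$\implies$(a). The implication (a)$\implies$(b) is immediate: $S$ is nonempty and $S \subseteq \gensg{S} = \gensg{\differences S}$, so $S \subseteq \gensg{\differences S} \cap S$, which is therefore nonempty. The implication (c)$\implies$(a) is also immediate from part~\eqref{lem:generated-subgroups-1}: if $S \subseteq \gensg{\differences S}$, then $\gensg{S} \leq \gensg{\differences S}$, and combined with the reverse inclusion from~\eqref{lem:generated-subgroups-1} we get equality.

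The one step with any content is (b)$\implies$(c), and it is the crux of the argument, though still elementary. Suppose $s_0 \in \gensg{\differences S} \cap S$. Then for an arbitrary $s \in S$ we have $s_0^{-1} s \in \differences S \subseteq \gensg{\differences S}$, and since $s_0 \in \gensg{\differences S}$ as well, the product $s = s_0 \cdot (s_0^{-1} s)$ lies in $\gensg{\differences S}$. As $s \in S$ was arbitrary, $S \subseteq \gensg{\differences S}$, which is (c). The intuition here is that $S$ lies in a single left coset of $\gensg{\differences S}$ (namely $s_0 \gensg{\differences S}$ for any fixed $s_0 \in S$), so $S$ meets $\gensg{\differences S}$ if and only if that coset is the subgroup itself, i.e.\ if and only if $S$ is entirely contained in it. I do not anticipate any real obstacle; the only thing to be careful about is keeping the left/right placement of inverses consistent, since $\differences S$ is defined as $S^{-1} S$ rather than $S S^{-1}$.
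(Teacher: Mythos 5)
Your proposal is correct and follows essentially the same route as the paper: the same containment argument for part (i), and the same cycle (a)$\implies$(b)$\implies$(c)$\implies$(a) with the identical key step $s = s_0(s_0^{-1}s)$ for (b)$\implies$(c). The coset intuition you add is a nice touch but the substance matches the paper's proof exactly.
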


\begin{proof}
\begin{inparaenum}[\rm (i)]
\item
By definition, every element of $\differences S$ belongs to the subgroup generated by $S$.
Hence $\differences S \subseteq \gensg{S}$, from which it follows that $\gensg{\differences S} \subseteq \gensg{S}$.

\item
(a) $\implies$ (b):
If $\gensg{\differences S} = \gensg{S}$, then obviously $\gensg{\differences S} \cap S = \gensg{S} \cap S = S \neq \emptyset$.

(b) $\implies$ (c):
Assume that $\gensg{\differences S} \cap S \neq \emptyset$.
Then there exists $a \in \gensg{\differences S} \cap S$.
Let $b \in S$.
Since $a \in \gensg{\differences S}$ and $a^{-1} b \in S^{-1} S = \differences S$, we have $b = a a^{-1} b \in \gensg{\differences S}$.
Hence $S \subseteq \gensg{\differences S}$.

(c) $\implies$ (a):
If $S \subseteq \gensg{\differences S}$, then clearly $\gensg{S} \subseteq \gensg{\differences S}$.
The converse inclusion $\gensg{\differences S} \subseteq \gensg{S}$ holds by part (i).
\end{inparaenum}
\end{proof}

\begin{lemma}
\label{lem:symmetries-2}
Assume that $\card{A} = k$.
Let $f = f^* \circ {\cs}|_{A^n}$, $g = g^* \circ {\cs}|_{A^n}$ for some $f^*, g^* \colon \mathcal{Z}^{(n)}(A) \to B$, and let $\sigma \in \symm{n}$.
Then the following statements hold.
\begin{enumerate}[\rm (i)]
\item\label{lem:symmetries:3}
If $\ell \in Z(k,n)$ and $\pi, \tau \in \symm{\ell}$ are permutations satisfying $g^*_\ell(M, \vect{a}) = f^*_\ell(M, \vect{a} \pi)$ and $g^*_\ell(M, \vect{a}) = f^*_\ell(M, \vect{a} \tau)$ for all $(M, \vect{a}) \in \mathcal{Z}^{(n)}_\ell(A)$, then $\pi^{-1} \tau \in \Inv f^*_\ell$.

\item\label{lem:symmetries:4}
If $g = f \circ \ontuples{\sigma}$, then $\gensg{\differences \Pat^{(\ell)} \sigma} \subseteq \Inv f^*_\ell$ for every $\ell \in Z(k,n)$.

\item\label{lem:symmetries:5}
If $g = f \circ \ontuples{\sigma}$ and $\Pat^{(\ell)} \sigma \subseteq \gensg{\differences \Pat^{(\ell)} \sigma}$ for every $\ell \in Z(k,n)$, then $f = g$.
\end{enumerate}
\end{lemma}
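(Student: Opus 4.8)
The three statements build on one another, so I would prove them in order, each reducing to an application of the preceding lemmas together with the group theory of Lemma~\ref{lem:generated-subgroups}.

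For part~\eqref{lem:symmetries:3}, the hypotheses give, for every $(M, \vect{a}) \in \mathcal{Z}^{(n)}_\ell(A)$, that $f^*_\ell(M, \vect{a}\pi) = g^*_\ell(M, \vect{a}) = f^*_\ell(M, \vect{a}\tau)$. I would like to conclude $f^*_\ell(M, \vect{b}) = f^*_\ell(M, \vect{b}\,\pi^{-1}\tau)$ for all such pairs, which is exactly the assertion $\pi^{-1}\tau \in \Inv f^*_\ell$. The substitution to make is $\vect{b} := \vect{a}\pi$, so that $\vect{a} = \vect{b}\pi^{-1}$ and $\vect{a}\tau = \vect{b}\,\pi^{-1}\tau$; one has to check that as $\vect{a}$ ranges over the second components appearing in $\mathcal{Z}^{(n)}_\ell(A)$ paired with a fixed $M$, so does $\vect{b} = \vect{a}\pi$, since the set of admissible second components for a given $M$ is closed under the $\symm{\ell}$-action. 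This is the one small point requiring care, but it is immediate from the definition of $\mathcal{Z}^{(n)}_\ell(A)$.

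For part~\eqref{lem:symmetries:4}, fix $\ell \in Z(k,n)$. By Lemma~\ref{lem:symmetries}\eqref{lem:symmetries:1} applied to the hypothesis $g = f \circ \ontuples{\sigma}$, every $\tau \in \Pat^{(\ell)}\sigma$ satisfies $g^*_\ell(M, \vect{a}) = f^*_\ell(M, \vect{a}\tau)$ for all $(M, \vect{a}) \in \mathcal{Z}^{(n)}_\ell(A)$. Hence any two such patterns $\pi, \tau \in \Pat^{(\ell)}\sigma$ fall under the hypotheses of part~\eqref{lem:symmetries:3}, giving $\pi^{-1}\tau \in \Inv f^*_\ell$. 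Thus $\differences \Pat^{(\ell)}\sigma = (\Pat^{(\ell)}\sigma)^{-1}(\Pat^{(\ell)}\sigma) \subseteq \Inv f^*_\ell$, and since $\Inv f^*_\ell$ is a subgroup of $\symm{\ell}$, it follows that $\gensg{\differences \Pat^{(\ell)}\sigma} \subseteq \Inv f^*_\ell$.

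For part~\eqref{lem:symmetries:5}, the extra hypothesis is $\Pat^{(\ell)}\sigma \subseteq \gensg{\differences \Pat^{(\ell)}\sigma}$ for every $\ell \in Z(k,n)$; combined with part~\eqref{lem:symmetries:4} this yields $\Pat^{(\ell)}\sigma \subseteq \gensg{\differences\Pat^{(\ell)}\sigma} \subseteq \Inv f^*_\ell$ for every such $\ell$. By Lemma~\ref{lem:symmetries}\eqref{lem:symmetries:2} this means $\sigma \in \Inv f$, i.e.\ $f = f \circ \ontuples{\sigma} = g$. (Alternatively one can invoke part~\eqref{lem:symmetries:1} with a fixed $\tau = \asc{\ell} \in \Pat^{(\ell)}\sigma$ together with $\Pat^{(\ell)}\sigma \subseteq \Inv f^*_\ell$ to get $g^*_\ell = f^*_\ell$ directly for each $\ell$, hence $g^* = f^*$.) I do not expect a serious obstacle here: the real content is in the linkage to permutation patterns already done in Lemma~\ref{lem:symmetries}, and the only thing to be careful about is keeping straight the direction of the $\symm{\ell}$-action on second components and the fact that $\asc{\ell}$, which is needed for the alternative route, always lies in $\Pat^{(\ell)}\sigma$ whenever $\sigma$ has $\asc{n}$ among its patterns — which it trivially does.
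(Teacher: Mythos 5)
Your proof is correct and follows essentially the same route as the paper's: part (i) by the substitution $\vect{a} \mapsto \vect{a}\pi^{-1}$ (your $\vect{b} := \vect{a}\pi$ is the same computation read in the other direction, and your observation that the admissible second components for a fixed $M$ are closed under the $\symm{\ell}$-action is the one point the paper leaves tacit), part (ii) by combining Lemma~\ref{lem:symmetries}\eqref{lem:symmetries:1} with part (i) and the fact that $\Inv f^*_\ell$ is a subgroup, and part (iii) by feeding the resulting inclusion into Lemma~\ref{lem:symmetries}\eqref{lem:symmetries:2}. One caveat on your parenthetical alternative for (iii): $\asc{\ell}$ need \emph{not} belong to $\Pat^{(\ell)}\sigma$ (take $\sigma = \desc{n}$, for which $\Pat^{(\ell)}\sigma = \{\desc{\ell}\}$), and the claim that $\sigma$ trivially has $\asc{n}$ among its patterns is false except for $\sigma = \asc{n}$; the alternative route can be repaired by using an arbitrary fixed $\tau_0 \in \Pat^{(\ell)}\sigma$ in place of $\asc{\ell}$, since any such $\tau_0$ lies in $\Inv f^*_\ell$ by what you have already shown.
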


\begin{proof}
\begin{inparaenum}[\upshape (i)]
\item
For any $(M, \vect{a}) \in \mathcal{Z}^{(n)}_\ell(A)$, we have
\[
f^*_\ell(M, \vect{a})
= f^*_\ell(M, \vect{a} \pi^{-1} \pi)
= g^*_\ell(M, \vect{a} \pi^{-1})
= f^*_\ell(M, \vect{a} \pi^{-1} \tau),
\]
that is, $\pi^{-1} \tau \in \Inv f^*_\ell$.

\item
By Lemma~\ref{lem:symmetries}\eqref{lem:symmetries:1}, for every $\pi, \tau \in \Pat^{(\ell)} \sigma$ it holds that $f^*_\ell(M, \vect{a} \pi) = g^*_\ell(M, \vect{a}) = f^*_\ell(M, \vect{a} \tau)$
for all $(M, \vect{a}) \in \mathcal{Z}^{(n)}_\ell(A)$.
By part \eqref{lem:symmetries:3}, $\pi^{-1} \tau \in \Inv f^*_\ell$.
Therefore, every permutation in $\differences \Pat^{(\ell)} \sigma$ is an invariant of $f^*_\ell$ and hence $\gensg{\differences \Pat^{(\ell)} \sigma}  \subseteq \Inv f^*_\ell$.

\item
By our assumptions and part \eqref{lem:symmetries:4}, $\Pat^{(\ell)} \sigma \subseteq \gensg{\differences \Pat^{(\ell)} \sigma} \subseteq \Inv f^*_\ell$ for every $\ell \in Z(k,n)$.
Then $\sigma \in \Inv f$ by Lemma~\ref{lem:symmetries}\eqref{lem:symmetries:2}.
Consequently, $g = f \circ \ontuples{\sigma} = f$.
\end{inparaenum}
\end{proof}

\begin{proposition}
\label{prop:cs-distinct-equivalent}
Let $n, k \in \IN$, let $\sigma \in \symm{n}$, and let $A$, $B$ be sets with $\card{A} = k$, $\card{B} \geq 2$.
Then there exist functions $f = f^* \circ {\cs}|_{A^n}$ and $g = g^* \circ {\cs}|_{A^n}$ such that $f = g \circ \ontuples{\sigma}$ and $f \neq g$ if and only if $\gensg{\differences \Pat^{(\ell)} \sigma} \cap \Pat^{(\ell)} \sigma = \emptyset$ for some $\ell \in Z(k,n)$.
\end{proposition}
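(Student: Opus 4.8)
The plan is to prove the two implications separately, the backward one (exhibiting distinct similar $f,g$) being the one that requires an actual construction. For the \emph{forward} direction I would argue by contraposition. Suppose $\gensg{\differences \Pat^{(\ell)} \sigma} \cap \Pat^{(\ell)} \sigma \neq \emptyset$ for every $\ell \in Z(k,n)$. As $Z(k,n) \subseteq \{0, \dots, n\}$, each $\Pat^{(\ell)} \sigma$ is nonempty, so Lemma~\ref{lem:generated-subgroups}\eqref{lem:generated-subgroups-2} (with $S := \Pat^{(\ell)} \sigma$, equivalence of conditions (b) and (c)) turns this into $\Pat^{(\ell)} \sigma \subseteq \gensg{\differences \Pat^{(\ell)} \sigma}$ for every $\ell \in Z(k,n)$. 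This is precisely the hypothesis of Lemma~\ref{lem:symmetries-2}\eqref{lem:symmetries:5}; applying that lemma with the roles of $f$ and $g$ interchanged (reading ``$f = g \circ \ontuples{\sigma}$'' as ``$g' = f' \circ \ontuples{\sigma}$'' with $f' := g$, $g' := f$) yields $f = g$, so no distinct such pair can exist.

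For the \emph{backward} direction, assume $\gensg{\differences \Pat^{(\ell_0)} \sigma} \cap \Pat^{(\ell_0)} \sigma = \emptyset$ for some $\ell_0 \in Z(k,n)$, and write $P := \Pat^{(\ell_0)} \sigma$ and $H := \gensg{\differences P}$; fixing $\tau_* \in P$ we then have $\tau_* \notin H$ (this already forces $\ell_0 \geq 2$, since $\symm{0}$ and $\symm{1}$ are trivial). The structural point is that, as $\ell_0 \in Z(k,n)$, there is a multiset $M_0 \in \mathcal{M}(A)$ with $\card{M_0} = n$ having exactly $\ell_0$ elements of multiplicity $1$, and the fibre of $\mathcal{Z}^{(n)}_{\ell_0}(A)$ over $M_0$ --- which the right $\symm{\ell_0}$-action $\vect{a} \mapsto \vect{a}\tau$ preserves, since $\ms(\vect{a}\tau) = \ms(\vect{a})$ --- is a torsor for that action: once an element $\vect{a}_0$ of that fibre is fixed, every element is uniquely $\vect{a}_0 \pi$ with $\pi \in \symm{\ell_0}$. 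By Lemma~\ref{lem:symmetries}\eqref{lem:symmetries:1} (again with $f,g$ interchanged), the relation $f = g \circ \ontuples{\sigma}$ amounts to the family of identities $f^*_\ell(M, \vect{a}) = g^*_\ell(M, \vect{a}\tau)$ over all $\ell \in Z(k,n)$, $\tau \in \Pat^{(\ell)} \sigma$, and $(M, \vect{a}) \in \mathcal{Z}^{(n)}_\ell(A)$. I would therefore set $f^* := g^* := 0$ identically off the fibre over $M_0$ (so that all those identities hold trivially outside that fibre, the two sides staying in the same fibre) and, transporting along $\pi \mapsto \vect{a}_0 \pi$, reduce what remains to a purely group-theoretic task: produce $F, G \colon \symm{\ell_0} \to B$ with $F(\pi) = G(\pi\tau)$ for all $\pi \in \symm{\ell_0}$ and all $\tau \in P$, yet $F \neq G$, and then put $g^*(M_0, \vect{a}_0 \pi) := G(\pi)$ and $f^*(M_0, \vect{a}_0 \pi) := F(\pi)$.

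For the group-theoretic core, pick distinct $0, 1 \in B$, let $G$ be the $B$-valued indicator of the subgroup $H$ (value $1$ on $H$, value $0$ elsewhere), and set $F(\pi) := G(\pi\tau_*)$. For any $\tau \in P$ and any $\pi$ one has $(\pi\tau_*)^{-1}(\pi\tau) = \tau_*^{-1}\tau \in \differences P \subseteq H$, so $\pi\tau_* \in H \iff \pi\tau \in H$ and hence $G(\pi\tau_*) = G(\pi\tau)$; thus $F(\pi) = G(\pi\tau)$ for every $\tau \in P$, which is the required relation, so $f = g \circ \ontuples{\sigma}$ after transporting back. On the other hand, evaluating at $\pi = \id$ gives $G(\id) = 1$ (as $\id \in H$) while $F(\id) = G(\tau_*) = 0$ (as $\tau_* \notin H$), so $F \neq G$; since ${\cs}|_{A^n}$ surjects onto $\mathcal{Z}^{(n)}(A)$, there is $\vect{b} \in A^n$ with $\cs(\vect{b}) = (M_0, \vect{a}_0)$, and then $f(\vect{b}) \neq g(\vect{b})$, i.e.\ $f \neq g$. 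The main obstacle is this backward direction, and within it the delicate part is not any single computation but turning the fibre-local group-theoretic construction into a genuine global pair $f^*, g^* \colon \mathcal{Z}^{(n)}(A) \to B$ without the ``filler'' values accidentally violating one of the Lemma~\ref{lem:symmetries}\eqref{lem:symmetries:1} identities --- which is exactly why it pays to isolate a single fibre and keep the filler constant --- together with the coset bookkeeping that both exhibits the indicator of $H$ as a valid choice and locates precisely where the hypothesis $H \cap P = \emptyset$ enters.
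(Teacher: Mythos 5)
Your proof is correct and follows essentially the same route as the paper: the forward direction via Lemma~\ref{lem:generated-subgroups}\eqref{lem:generated-subgroups-2} and Lemma~\ref{lem:symmetries-2}\eqref{lem:symmetries:5}, and the backward direction via the indicator of $H = \gensg{\differences \Pat^{(\ell_0)}\sigma}$ against its translate by a fixed pattern, verified through Lemma~\ref{lem:symmetries}\eqref{lem:symmetries:1}. The only (cosmetic) difference is that you support the construction on a single fibre over one multiset $M_0$, whereas the paper defines the same coset indicators uniformly on all $(M,\vect{a})$ with $\supp(\vect{a}) = \{1,\dots,\ell\}$.
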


\begin{proof}
Assume first that $\gensg{\differences \Pat^{(\ell)} \sigma} \cap \Pat^{(\ell)} \sigma \neq \emptyset$ for every $\ell \in Z(k,n)$.
Then $\Pat^{(\ell)} \sigma \subseteq \gensg{\differences \Pat^{(\ell)} \sigma}$ for all $\ell \in Z(k,n)$ by Lemma~\ref{lem:generated-subgroups}\eqref{lem:generated-subgroups-2}.
Lemma~\ref{lem:symmetries-2}\eqref{lem:symmetries:5} then implies that if $f = f^* \circ {\cs}|_{A^n}$ and $g = g^* \circ {\cs}|_{A^n}$ satisfy $g = f \circ \ontuples{\sigma}$, then $f = g$.

Assume then that $\ell \in Z(k,n)$ is a number such that $\gensg{\differences \Pat^{(\ell)} \sigma} \cap \Pat^{(\ell)} \sigma = \emptyset$.
Define the functions $f^*, g^* \colon \mathcal{Z}^{(n)}(A) \to B$ as follows. Fix a permutation $\rho \in \Pat^{(\ell)} \sigma$, and let
\begin{align*}
f^*(M, \vect{a})
&=
\begin{cases}
1, & \text{if $\vect{a} = (1 \dots \ell) \pi$ for some $\pi \in \gensg{\differences \Pat^{(\ell)} \sigma}$,} \\
0, & \text{otherwise,}
\end{cases}
\\
g^*(M, \vect{a})
&=
\begin{cases}
1, & \text{if $\vect{a} = (1 \dots \ell) \pi \rho^{-1}$ for some $\pi \in \gensg{\differences \Pat^{(\ell)} \sigma}$,} \\
0, & \text{otherwise.}
\end{cases}
\end{align*}
Let $f := f^* \circ {\cs}|_{A^n}$ and $g := g^* \circ {\cs}|_{A^n}$.
In order to show that $g = f \circ \ontuples{\sigma}$, it suffices to verify that $f^*$, $g^*$, and $\sigma$ satisfy the conditions of Lemma~\ref{lem:symmetries}\eqref{lem:symmetries:1}.
Observe first that for every $m \in Z(k,n)$ with $m \neq \ell$, the equalities $g^*_m(M, \vect{a}) = 0 = f^*_m(M, \vect{a} \pi)$ obviously hold for every $(M, \vect{a}) \in \mathcal{Z}^{(n)}_m(A)$ and for every permutation $\pi \in \symm{m}$.

Let then $(M, \vect{a}) \in \mathcal{Z}^{(n)}_\ell(A)$.
If $\supp{\vect{a}} \neq \{1, \dots, \ell\}$, then clearly $g^*_\ell(M, \vect{a}) = 0 = f^*_\ell(M, \vect{a} \pi)$ for every permutation $\pi \in \symm{\ell}$.
We may thus assume that $\supp{\vect{a}} = \{1, \dots, \ell\}$, i.e., $\vect{a} = (1 \dots \ell) \alpha$ for some $\alpha \in \symm{\ell}$.

If $g^*_\ell(M, \vect{a}) = 1$, then $\vect{a} = (1 \dots \ell) \pi \rho^{-1}$ for some $\pi \in \gensg{\differences \Pat^{(\ell)} \sigma}$.
Then for every $\tau \in \Pat^{(\ell)} \sigma$, we have $\pi \rho^{-1} \tau \in \gensg{\differences \Pat^{(\ell)} \sigma}$, so $f^*_\ell(M, \vect{a} \tau) = 1$.

If $g^*_\ell(M, \vect{a}) = 0$, then
$\vect{a} = (1 \dots \ell) \alpha$ for some $\alpha \in \symm{\ell}$ such that $\alpha \neq \pi \rho^{-1}$ for all $\pi \in \gensg{\differences \Pat^{(\ell)} \sigma}$, that is, $\alpha \rho \notin \gensg{\differences \Pat^{(\ell)} \sigma}$.
Let $\tau \in \Pat^{(\ell)} \sigma$, and suppose, to the contrary, that $\alpha \tau \in \gensg{\differences \Pat^{(\ell)} \sigma}$. Since $\tau^{-1} \rho \in \differences \Pat^{(\ell)} \sigma$, we get $\alpha \rho = \alpha \tau \tau^{-1} \rho \in \gensg{\differences \Pat^{(\ell)} \sigma}$, a contradiction. Therefore, for every $\tau \in \Pat^{(\ell)} \sigma$, it holds that $\alpha \tau \notin \gensg{\differences \Pat^{(\ell)} \sigma}$, and hence $f^*_\ell(M, \vect{a} \tau) = 0$.
Thus the conditions of Lemma~\ref{lem:symmetries}\eqref{lem:symmetries:1} are satisfied, and we conclude that $g = f \circ \ontuples{\sigma}$.

It remains to show that $f \neq g$.
In order to see this, let $\vect{b} \in A^n$ be any tuple satisfying $\singles(\vect{b}) = (1, 2, \dots, \ell)$. Such a tuple $\vect{b}$ exists; for example, take $\vect{b} := (1, 2, \dots, \ell, \ell + 1, \ell + 1, \dots, \ell + 1)$ if $\ell < n$ (and hence $\ell \leq n - 2$ and $\ell < k$) or take $\vect{b} := (1, 2, \dots, \ell)$ if $\ell = n$.
Then $f(\vect{b}) = f^*(\ms(\vect{b}), 1 \dots \ell) = 1$, because $1 \dots \ell = (1 \dots \ell) \id$ and $\id \in \gensg{\differences \Pat^{(\ell)} \sigma}$.
On the other hand, since $\Pat^{(\ell)} \sigma \cap \gensg{\differences \Pat^{(\ell)} \sigma} = \emptyset$ and $\rho \in \Pat^{(\ell)} \sigma$, there is no permutation $\pi \in \gensg{\differences \Pat^{(\ell)} \sigma}$ such that $\pi \rho^{-1} = \id$; hence $g(\vect{b}) = g^*(\ms(\vect{b}), 1 \dots \ell) = 0$.
\end{proof}

In view of Proposition~\ref{prop:cs-distinct-equivalent}, the condition $\gensg{\differences \Pat^{(\ell)} \sigma} \cap \Pat^{(\ell)} \sigma \neq \emptyset$, or equivalently, $\Pat^{(\ell)} \sigma \subseteq \gensg{\differences \Pat^{(\ell)} \sigma}$, or, equivalently, $\gensg{\differences \Pat^{(\ell)} \sigma} = \gensg{\Pat^{(\ell)} \sigma}$, according to Lemma~\ref{lem:generated-subgroups}\eqref{lem:generated-subgroups-2}) seems relevant for approaching Question~\ref{q:fg-cs}.
Let $\sigma \in \symm{n}$ and $\ell \in \nset{n}$. We say that $\sigma$ is \emph{$\ell$\hyp{}equalizing} if $\gensg{\differences \Pat^{(\ell)} \sigma} \cap \Pat^{(\ell)} \sigma \neq \emptyset$; otherwise $\sigma$ is \emph{$\ell$\hyp{}differentiating.}
We say that $\sigma$ is \emph{equalizing} if it is $\ell$\hyp{}equalizing for every $\ell \in \nset{n}$, and we say that $\sigma$ is \emph{differentiating} if it is $\ell$\hyp{}differentiating for some $\ell \in \nset{n}$.

Let $\pi \in \symm{n}$ and $\tau \in \symm{m}$.
The \emph{direct sum} $\pi \oplus \tau$ and the \emph{skew sum} $\pi \ominus \tau$ of $\pi$ and $\tau$ are the permutations in $\symm{n+m}$ consisting of shifted copies of $\pi$ and $\tau$:
\begin{align*}
(\pi \oplus \tau)(i)
&=
\begin{cases}
\pi(i), & \text{if $1 \leq i \leq n$,} \\
n + \tau(i - n), & \text{if $n + 1 \leq i \leq n + m$,}
\end{cases}
\\
(\pi \ominus \tau)(i)
&=
\begin{cases}
m + \pi(i), & \text{if $1 \leq i \leq n$,} \\
\tau(i - n), & \text{if $n + 1 \leq i \leq n + m$.}
\end{cases}
\end{align*}

For $m \in \IN_+$, we denote the ascending $m$\hyp{}permutation $1 2 \dots m$ and the descending $m$\hyp{}permutation $m (m-1) \dots 1$ by $\asc{m}$ and $\desc{m}$, respectively.
For notational convenience, we agree that $\desc{0} \oplus \desc{m} = \desc{m} \oplus \desc{0} = \desc{m}$ and $\asc{0} \ominus \asc{m} = \asc{m} \ominus \asc{0} = \asc{m}$.

\begin{remark}
It is easy to find examples of $\ell$\hyp{}equalizing $n$\hyp{}permutations.
For example, every $n$\hyp{}permutation containing the pattern $\asc{\ell}$ is $\ell$\hyp{}equalizing, because $\asc{\ell} \in \differences \Pat^{(\ell)} \sigma$ for every permutation $\sigma \in \symm{n}$.
\end{remark}

\begin{proposition}
\label{prop:l-diff}
Let $\ell, n \in \IN_+$ with $\ell \leq n$.
Then the following statements hold.
\begin{enumerate}[\rm (i)]
\item\label{prop:l-diff:1}
For every $n \geq 1$, every $n$\hyp{}permutation is $1$\hyp{}equalizing.

\item\label{prop:l-diff:2}
For every $n \geq 2$, the only $2$\hyp{}differentiating $n$\hyp{}permutation is $\desc{n}$.

\item\label{prop:l-diff:3}
For $3 \leq \ell \leq n - 1$, the following $n$\hyp{}permutations are $\ell$\hyp{}differentiating:
\begin{itemize}
\item $\desc{m} \oplus \desc{n-m}$, for $0 \leq m \leq n-1$,
\item $\pi \ominus \desc{n-\ell} \ominus \tau$, for every $\pi \in \symm{p}$, $\tau \in \symm{q}$ with $p, q \geq 1$ and $p + q = \ell$.
\end{itemize}

\item\label{prop:l-diff:4}
The only $n$\hyp{}equalizing $n$\hyp{}permutation is $\asc{n}$.
\end{enumerate}
\end{proposition}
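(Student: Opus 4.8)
I would treat the four statements separately. Statements~\eqref{prop:l-diff:1} and~\eqref{prop:l-diff:4} are immediate: $\Pat^{(1)} \sigma = \{\asc{1}\}$ and $\Pat^{(n)} \sigma = \{\sigma\}$ are singletons, and for a one-element subset $\{s\} \subseteq \symm{\ell}$ we have $\differences \{s\} = \{\id\}$, hence $\gensg{\differences \{s\}} = \{\id\}$, which meets $\{s\}$ exactly when $s = \id$; this gives that every $n$-permutation is $1$-equalizing and that $\sigma$ is $n$-equalizing if and only if $\sigma = \asc{n}$. For statement~\eqref{prop:l-diff:2}: if $\sigma \neq \desc{n}$, then $\sigma$ involves $\asc{2}$, hence is $2$-equalizing because $\asc{2} \in \differences \Pat^{(2)} \sigma \cap \Pat^{(2)} \sigma$ (cf.\ the preceding remark); and if $\sigma = \desc{n}$, then $\Pat^{(2)} \sigma = \{\desc{2}\}$ is a singleton, so $\gensg{\differences \Pat^{(2)} \sigma} = \{\id\}$ misses $\desc{2}$ and $\sigma$ is $2$-differentiating.

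The real content is statement~\eqref{prop:l-diff:3}, where for each listed $\sigma$ the plan is to find a subgroup $H \leq \symm{\ell}$ with $\differences \Pat^{(\ell)} \sigma \subseteq H$, so that $\gensg{\differences \Pat^{(\ell)} \sigma} \leq H$, and $H \cap \Pat^{(\ell)} \sigma = \emptyset$; this immediately yields that $\sigma$ is $\ell$-differentiating. For $\sigma = \desc{m} \oplus \desc{n-m}$ I would first note that every $\ell$-pattern of $\sigma$ is of the form $r_a := \desc{a} \oplus \desc{\ell-a}$ for some $0 \leq a \leq \ell$ (an $\ell$-subset of positions meeting the first descending run in $a$ points reduces to two descending runs, the first made up of the $a$ smallest values). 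A short computation shows $r_a(i) \equiv (a+1) - i \pmod{\ell}$ for all $i \in \nset{\ell}$, and therefore $(r_a^{-1} r_b)(i) \equiv i + (a - b) \pmod{\ell}$ for all $i$; the only permutation of $\nset{\ell}$ with such a property is a power of $\natcycle{\ell}$, so $\differences \Pat^{(\ell)} \sigma \subseteq \cycl{\ell}$ and hence $\gensg{\differences \Pat^{(\ell)} \sigma} \leq \cycl{\ell}$. It remains to see that no $r_a$ lies in $\cycl{\ell}$: a power $\natcycle{\ell}^j$ satisfies $\natcycle{\ell}^j(2) - \natcycle{\ell}^j(1) \equiv 1 \pmod{\ell}$, whereas $r_a(2) - r_a(1) \in \{-1, \ell-1\}$ and $\ell - 1 \not\equiv 1 \pmod{\ell}$ since $\ell \geq 3$. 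Thus $\cycl{\ell} \cap \Pat^{(\ell)} \sigma = \emptyset$ and $\sigma$ is $\ell$-differentiating (the boundary case $m = 0$, where $\sigma = \desc{n}$ and $\Pat^{(\ell)}\sigma = \{\desc{\ell}\}$, is subsumed).

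For $\sigma = \pi \ominus \desc{n-\ell} \ominus \tau$ with $\pi \in \symm{p}$, $\tau \in \symm{q}$, $p, q \geq 1$, $p + q = \ell$, every $\ell$-pattern $\rho$ of $\sigma$ is obtained by selecting $a \leq p$ positions from the first ($\pi$-)block, $c \leq n - \ell$ from the middle descending block, and $b \leq q$ from the last ($\tau$-)block, with $a + b + c = \ell$, and reduces to $\pi_S \ominus \desc{c} \ominus \tau_T$ for suitable $S, T$. Keeping track of the values occupying positions $1, \dots, p$ of $\rho$ — the first $a$ of these positions receive the $a$ largest values of $\nset{\ell}$, and the next $p - a$ receive the $p - a$ largest values allotted to the middle block — one computes that $\rho(\{1, \dots, p\}) = \{q+1, \dots, \ell\}$, a set independent of $a, b, c, S, T$. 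Consequently any difference $\rho_1^{-1} \rho_2$ maps $\{1, \dots, p\}$ onto itself, i.e.\ $\differences \Pat^{(\ell)} \sigma \subseteq \symm{\ell}^{p,q}$ and $\gensg{\differences \Pat^{(\ell)} \sigma} \leq \symm{\ell}^{p,q}$. On the other hand a permutation in $\symm{\ell}^{p,q}$ fixes $\{1, \dots, p\}$ setwise, while every $\ell$-pattern of $\sigma$ carries it to $\{q+1, \dots, \ell\}$, which differs from $\{1, \dots, p\}$ because $q \geq 1$. Hence $\symm{\ell}^{p,q} \cap \Pat^{(\ell)} \sigma = \emptyset$ and $\sigma$ is $\ell$-differentiating.

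The main obstacle is statement~\eqref{prop:l-diff:3}: one must identify the correct containing subgroup for each family and describe $\Pat^{(\ell)} \sigma$ accurately enough to control all of its differences. For the direct-sum permutations the crucial fact is the modular identity $r_a(i) \equiv (a+1) - i \pmod{\ell}$, which squeezes all differences into the cyclic group $\cycl{\ell}$; for the skew-sum permutations the crucial — and less transparent — fact is that, despite $\pi$ and $\tau$ being arbitrary, every $\ell$-pattern sends the initial block $\{1, \dots, p\}$ onto one and the same value set, which squeezes all differences into $\symm{\ell}^{p,q}$. Establishing these two facts and checking that the respective subgroups genuinely avoid $\Pat^{(\ell)} \sigma$ is where the work lies; the other three parts are routine.
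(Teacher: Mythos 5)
Your proof is correct and follows essentially the same strategy as the paper's: for each family in part \eqref{prop:l-diff:3} you trap $\differences \Pat^{(\ell)} \sigma$ inside a subgroup ($\cycl{\ell}$ for the direct sums, $\symm{p} \oplus \symm{q} = \symm{\ell}^{p,q}$ for the skew sums) that is disjoint from $\Pat^{(\ell)} \sigma$, which is exactly the paper's argument, and parts \eqref{prop:l-diff:1}, \eqref{prop:l-diff:2}, \eqref{prop:l-diff:4} match the paper's (largely omitted) routine verifications. You merely supply more explicit computations (the congruence $r_a(i) \equiv (a+1)-i \pmod{\ell}$ and the tracking of $\rho(\{1,\dots,p\})$) where the paper asserts the containments directly.
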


\begin{proof}
\begin{inparaenum}[\rm (i)]
\item
Trivial.

\item
Let $\sigma \in \symm{n}$.
If $\sigma = n (n-1) \dots 1$, then $\Pat^{(2)} \sigma = \{21\}$ and $\differences \Pat^{(2)} \sigma = \{12\}$; hence $\gensg{\Pat^{(2)} \sigma} = \symm{2}$ and $\gensg{\differences \Pat^{(2)} \sigma} = \{12\}$.
If $\sigma = 1 2 \dots n$, then $\Pat^{(2)} \sigma = \{12\}$ and $\differences \Pat^{(2)} \sigma = \{12\}$; hence $\gensg{\Pat^{(2)} \sigma} = \gensg{\differences \Pat^{(2)} \sigma} = \{12\}$.
Otherwise $\Pat^{(2)} \sigma = \{12, 21\}$ and $\differences \Pat^{(2)} \sigma = \{12, 21\}$; hence $\gensg{\Pat^{(2)} \sigma} = \gensg{\differences \Pat^{(2)} \sigma} = \symm{2}$.

\item
Assume first that $\sigma = \desc{m} \oplus \desc{n-m}$ for some $m \in \{0, \dots, n-1\}$.
It is easy to see that $\Pat^{(\ell)} \sigma \subseteq \{\desc{p} \oplus \desc{\ell-p} \mid 0 \leq p \leq \ell-1\}$. Hence $\differences \Pat^{(\ell)} \sigma \subseteq \{\asc{p} \ominus \asc{\ell-p} \mid 0 \leq p \leq \ell-1\}$.
The set $\{\asc{p} \ominus \asc{\ell-p} \mid 0 \leq p \leq \ell-1\} = \gensg{(1 \; 2 \; \cdots \; \ell)}$ constitutes a subgroup of $\symm{\ell}$ that is clearly disjoint from $\Pat^{(\ell)} \sigma$. Thus $\gensg{\differences \Pat^{(\ell)} \sigma} \cap \Pat^{(\ell)} \sigma = \emptyset$, so Lemma~\ref{lem:generated-subgroups} implies that $\sigma$ is $\ell$\hyp{}differentiating.

Assume then that $\sigma = \pi \ominus \desc{n-\ell} \ominus \tau$ for some $\pi \in \symm{p}$, $\tau \in \symm{q}$.
It is easy to see that
$\Pat^{(\ell)} \sigma \subseteq \symm{p} \ominus \symm{q} := \{\rho \ominus \gamma \mid \rho \in \symm{p}, \gamma \in \symm{q}\}$.
Consequently, $\differences \Pat^{(\ell)} \sigma \subseteq \symm{p} \oplus \symm{q} := \{\rho \oplus \gamma \mid \rho \in \symm{p}, \gamma \in \symm{q}\}$.
The set
$\symm{p} \oplus \symm{q}$
constitutes a subgroup of $\symm{\ell}$ that is disjoint from
$\symm{p} \ominus \symm{q}$.
In order to see this, observe that for any
$\phi \in \symm{p} \oplus \symm{q}$,
we have $\phi^{-1}(1) \in \{1, \dots, p\}$, while for any
$\phi \in \symm{p} \ominus \symm{q}$,
it holds that $\phi^{-1}(1) \in \{p+1, \dots, \ell\}$.
Therefore $\gensg{\differences \Pat^{(\ell)} \sigma} \cap \Pat^{(\ell)} \sigma = \emptyset$, so Lemma~\ref{lem:generated-subgroups} implies that $\sigma$ is $\ell$\hyp{}differentiating.

\item
Trivial.
\end{inparaenum}
\end{proof}

\begin{remark}
Note that the descending permutation $\desc{n}$ is $\ell$\hyp{}differentiating for every $\ell$ with $2 \leq \ell \leq n$.
\end{remark}

\begin{remark}
Proposition~\ref{prop:l-diff} gives only a sufficient condition for $\ell$\hyp{}differentiating permutations in the case when $3 \leq \ell \leq n-1$.
It remains an open problem to determine necessary and sufficient conditions in this case.
\end{remark}

The \emph{reverse} of a tuple $\vect{a} = (a_1, \dots, a_n)$ is $\rev{\vect{a}} := (a_n, \dots, a_1)$.
In other words, $\rev{\vect{a}} = \vect{a} \desc{n}$.
The \emph{reverse} of a function $f \colon A^n \to B$ is the function $\rev{f} \colon A^n \to B$ given by the rule $\rev{f}(\vect{a}) = f(\rev{\vect{a}})$ for all $\vect{a} \in A^n$.
In other words, $\rev{f} = f \circ \ontuples{\desc{n}}$.
Obviously $f = \rev{f}$ if and only if $\desc{n} \in \Inv f$.

It is noteworthy that the reverse of any function determined by $\cs$ is also determined by $\cs$. For, it is easy to verify that is $f = f^* \circ {\cs}|_{A^n}$ for some $f^* \colon \mathcal{Z}^{(n)}(A) \to B$, then $\rev{f} = f' \circ {\cs}|_{A^n}$, where $f' \colon \mathcal{Z}^{(n)}(A) \to B$ is given by the rule $f'(M, \vect{a}) = f^*(M, \rev{\vect{a}})$ for all $(M, \vect{a}) \in \mathcal{Z}^{(n)}(A)$. 
This can also be seen from Lemma~\ref{lem:symmetries}\eqref{lem:symmetries:1} by noting that $\rev{\vect{a}} = \vect{a} \desc{\ell}$ and $\Pat^{(\ell)} \desc{n} = \{\desc{\ell}\}$ for every $\ell \in \nset{n}$.
Thus we obtain the following partial answer to Question~\ref{q:fg-cs}:
If $f = f^* \circ {\cs}|_{A^n}$ and $\desc{n} \notin \Inv f$, then $f \simeq \rev{f}$, $f \neq \rev{f}$, and $\rev{f} = f' \circ {\cs}|_{A^n}$.
This fact suggests that there are a lot of functions determined by $\cs$ that are similar to another distinct function determined by $\cs$.


\section{Concluding remarks and open problems}
\label{sec:remarks}

The work reported in this paper suggests several directions for further research. We would like to indicate a few open problems.

Corollary~\ref{cor:invariance} specifies the possible invariance groups of functions determined by $\cs$ when the arity $n$ is sufficiently large in relation to the cardinality $k$ of the domain, namely $n \geq 2k - 3$.
Question~\ref{q:inv-groups} remains open when $n < 2k - 3$.
A conclusive answer would require a more refined analysis of the groups of the form $\Comp^{(n)} G$, where $G$ is an intransitive or imprimitive group.
The paper~\cite{Lehtonen-Pat-Comp} only provides upper and lower bounds in these cases.

\begin{problem}
Let $A$ and $B$ be sets with $\card{A} = k \geq 2$ and $\card{B} \geq 2$.
What are the possible invariance groups of functions $f \colon A^n \to B$ determined by $\cs$ when $n < 2k - 3$?
\end{problem}

Lemma~\ref{lem:generated-subgroups} suggests the following group\hyp{}theoretical problem.

\begin{problem}
Let $G$ be a group.
Which subsets $S$ of $G$ satisfy $\gensg{\differences S} = \gensg{S}$?
\end{problem}

The author suspects this may be a very difficult problem in full generality. In the context of the current paper, we are interested in a special case of this problem, in which $G$ is the symmetric group $\symm{\ell}$ and the subsets $S$ are of the form $\Pat^{(\ell)} \sigma$ for some permutation $\sigma \in \symm{n}$, $n > \ell$, and we speak of $\ell$\hyp{}equalizing and $\ell$\hyp{}differentiating permutations $\sigma$, depending on whether $\gensg{\differences \Pat^{(\ell)} \sigma} = \gensg{\Pat^{(\ell)} \sigma}$ holds or not.

Proposition~\ref{prop:l-diff} gives only a sufficient condition for the $\ell$\hyp{}differentiating permutations in the case when $\ell \geq 3$.
The author has made some computer experiments that suggest that the condition might also be necessary when $3 \leq \ell \leq n - 2$.
In the case when $3 \leq \ell = n - 1$, there seem to exist many other $\ell$\hyp{}differentiating permutations than the ones listed in Proposition~\ref{prop:l-diff}, but an explicit characterization eludes us.

\begin{problem}
For $1 \leq \ell \leq n$, characterize the permutations $\sigma \in \symm{n}$ that satisfy the condition $\gensg{\differences \Pat^{(\ell)} \sigma} \cap \Pat^{(\ell)} \sigma = \emptyset$.
\end{problem}

In~\cite{Lehtonen-totsymm}, the current author posed the following question:
Is every function (of sufficiently large arity) uniquely determined, up to permutation of arguments, by its identification minors?
In precise terms, the \emph{deck} of a function $f \colon A^n \to B$, denoted $\deck f$, is the multiset $\multiset{f_I / {\simeq} : I \in \couples}$ of its identification minors, considered up to similarity (permutation of arguments). We say that a function $g \colon A^n \to B$ is a \emph{reconstruction} of $f$ if $\deck f = \deck g$.
We say that $f$ is \emph{reconstructible} if for every reconstruction $g$ of $f$ it holds that $f \simeq g$.
A class $\mathcal{C} \subseteq \mathcal{F}_{AB}$ is \emph{reconstructible} is all its members are reconstructible, and $\mathcal{C}$ is \emph{weakly reconstructible} if for all $f, g \in \mathcal{C}$, the condition $\deck f = \deck g$ implies $f \simeq g$.
Several results, both positive and negative, on this reconstruction problem were established in~\cite{CouLehSch-Post,CouLehSch-Sperner,Lehtonen-ms}.

In continuation of the line of research, it is natural to ask whether and to what extent the functions determined by content and singletons are reconstructible from identification minors.

\begin{problem}
Is every function (of sufficiently large arity) determined by $\cs$ reconstructible?
Is the class of functions determined by $\cs$ weakly reconstructible?
\end{problem}

Of course, Problem~\ref{problem}, the main question that this paper addresses, remains open, and we would like to repeat it here.

\begin{problem}
Characterize the functions with a unique identification minor.
\end{problem}


\section*{Acknowledgments}

The author would like to thank Alan J. Cain, Maria Jo\~ao Gouveia, and Reinhard P\"oschel for many stimulating discussions.

\end{document}